\newtheorem{theorem}{\bf Theorem}[section]
\newtheorem{definition}[theorem]{\bf Definition}
\newtheorem{lemma}[theorem]{\bf Lemma}
\newtheorem{prop}[theorem]{\bf Proposition}
\newtheorem{remark}[theorem]{\bf Remark}
\def\no{\noindent}
\newcommand{\Rmnum}[1]{\expandafter\@slowromancap\romannumeral #1@}
\numberwithin{equation}{section}
\begin{document}
\bibliographystyle{plain}
\begin{center}{\LARGE \bf Area-minimizing Cones over Grassmannian Manifolds}
\end{center}

\begin{center}
Xiaoxiang Jiao and Hongbin Cui$^\ast$

School of Mathematical Sciences, University of Chinese Academy of Sciences, Beijing 100049, China,

$^\ast$Corresponding Author

E-mail: xxjiao@ucas.ac.cn; \phantom{,,}cuihongbin16@mails.ucas.ac.cn

\end{center}

\bigskip

\no
{\bf Abstract.}
It is a well-known fact that there exists a standard minimal embedding map for the Grassmannians of $n$-planes $G(n,m;\mathbb{F})(\mathbb{F}=\mathbb{R},\mathbb{C},\mathbb{H})$ and Cayley plane $\mathbb{O}P^2$ into Euclidean spheres, then an natural question is that if the cones over these embedded Grassmannians are area-minimizing?

In this paper, detailed descriptions for this embedding map are given from the point view of Hermitian orthogonal projectors which can be seen as an direct generalization of Gary R. Lawlor's(\cite{lawlor1991sufficient}) original considerations for the case of real projective spaces, then we re-prove the area-minimization of those cones which was gradually obtained in \cite{kerckhove1994isolated}, \cite{kanno2002area} and \cite{ohno2015area} from the perspectives of isolated orbits of adjoint actions or canonical embedding of symmetric $R$-spaces, all based on the method of Gary R. Lawlor's Curvature Criterion.

Additionally, area-minimizing cones over almost all common Grassmannians has been given by Takahiro Kanno, except those cones over oriented real Grassmannians $\widetilde{G}(n,m;\mathbb{R})$ which are not Grassmannians of oriented $2$-planes. The second part of this paper is devoted to complement this result, a natural and key observation is that the oriented real Grassmannians can be considered as unit simple vectors in the exterior vector spaces, we prove that all their cones are area-minimizing except $\widetilde{G}(2,4;\mathbb{R})$.\\

\no
{\bf{Keywords}} Area-minimizing surface, Cone, Grassmannian, Cayley plane, Pl\"{u}cker embedding\\

\no
{\bf{Mathematics Subject Classification (2020).}} Primary 49Q05; Secondary 53C38, 53C40.

\bigskip

\tableofcontents
\section{Introduction}
\vspace{0.5cm}

$\textbf{Backgrounds \ on \ area-minimizing \ cones.}$

The classical Plateau's problem concerns on the existence of an area-minimizing surface bounded by a given Jordan curve in Euclidean space. This problem has a long and rich history and it admits many variations. In the development for high dimensional cases, geometry measure theory provides effective research method for it, there the considered surfaces are often refer to currents-an type of generalized surfaces, and a compact $k$-dimensional surface with boundary is area-minimizing in $\mathbb{R}^{n}$ if no other integral currents with the same boundary has less surface area.

Area-minimizing cones are a class of area-minimizing surfaces whose its truncated part inside the unit ball owns the least area among all integral currents with the same boundary. These cones are of particular importance for the existence of an isolated singularity at the origin which provide new examples of the solutions of non-interior regularity to Plateau's problem.

In their fundamental paper \textbf{Calibrated geometries} in 1982 \cite{harvey1982calibrated}, Harvey and Lawson have carried out an extensive research on the geometries determined by a differential form, which was called \textbf{calibration}. Generally, a calibration $\varphi$ on a Riemannian manifold $X$ is a closed, smooth, differential $p$-form of comass $1$ which means $\varphi$ attains maximum $1$ when restricted on all the Grassmannian of oriented $p$-plane of $T_{x}X$ where $x$ ranges over $X$. An amazing fact is that any calibrated surface is area-minimizing in its homology class after a simple application of Stokes' theorem.

The method of calibration can be used to determine the homologically area-minimizing circle on a given Riemannian manifold. For the Grassmannians, the research targets which we focus on this paper and also as a question posed in \cite{harvey1982calibrated}, such abundant results are presented in \cite{gluck1989calibrated},\cite{gluck1995volume}, \cite{gu1998stable},\cite{dadok1999pontryagin},\cite{grossman2001uniqueness}, etc.

If the ambient space of a surface $M$ is Euclidean space $\mathbb{R}^{n}$, its homology class just consists of those surfaces which have the same boundary with $M$, then a calibrated surface(cone) in $\mathbb{R}^{n}$ is (absolutely) area-minimizing. These results also hold with the smooth calibration replaced by its weak forms(like coflat calibrations, exterior derivatives of Lipschitz forms, etc, see \cite{harvey1982calibrated},
\cite{cheng1988area},\cite{lawlor1991sufficient},\cite{morgan1987calibrations},\cite{murdoch1991twisted}).
Several calibrations like normalized powers of K\"{a}hler forms, special Lagrangian forms, Cayley calibration, associative and coassociative calibrations are given by Harvey and Lawson, meanwhile certain area-minimizing cones are exhibited based on the calibrated theory: like those homogeneous hypercones which was introduced in \cite{lawson1972equivariant}(be calibrated by a class of coflat calibrations), the twisted normal cones associated with the compact austere submanifolds of sphere, and the coassociative minimal cone which was first constructed in \cite{lawson1977non} as an non-parametric Lipschitz solution to the minimal surface system of high codimension, etc.

Area-minimizing cones couldn't always be calibrated by some smooth differential forms, with an effort for searching necessary and sufficient conditions for a cone to be area-minimizing, Gary R. Lawlor(\cite{lawlor1991sufficient}) developed a general method for proving that a cone is area-minimizing, which was called Curvature Criterion by himself.

Lawlor explained his Curvature Criterion by two equivalent concrete objects, the vanishing calibration and the area-nonincreasing retraction, both defined on certain angular neighborhood of the cone rather than in the entire Euclidean space. They are linked by the fact that the tangent space of retraction surface is just the orthogonal complement of the dual of the vanishing calibration. They both derive an ordinary differential equation($ODE$). The area-minimizing tests include if the $ODE$ has solutions, what is the maximal existence interval of a solution, and then compare it with an important potential of the cone-\textbf{normal radius}. Lawlor also studied those cones which his Curvature Criterion is both necessary and sufficient like the minimal, isoparametric hypercones and the cones over principal orbits of polar group actions. Based on his method, the classification of minimal(area-minimizing, stable, unstable) cones over products of spheres is completed, he also proved some cones over unorientable real projective spaces, compact matrix groups are area-minimizing and gave new proofs of a large class of homogeneous hypercones being area-minimizing.

Other related researches, or researches for the area-minimizing cones associated to Lawlor's Curvature Criterion are: a new twistor calibrated theory and applications for the Veronese cone given by Tim Murdoch\cite{murdoch1991twisted}, also see an new point view of Veronese cone from vanishing calibration\cite{lawlor1995note}; Extending the definition of coflat calibration and the illustration of special Largraigian calibrated cones over compact matrix group which are also shown in \cite{lawlor1991sufficient} are given by Benny N. Cheng\cite{cheng1988area}; Proofs for area-minimization of $\textbf{Lawson-}$
$\textbf{Osserman}$ cones are given by Xiaowei Xu, Ling Yang and Yongsheng Zhang\cite{xu2018new}; Researches on area-minimizing cones associated with isoparametric foliations has been pioneered by Zizhou Tang and Yongsheng Zhang\cite{tang2020minimizing}, etc.

\vspace{0.5cm}
$\textbf{Backgrounds\ on \ area-minimizing \ cones \ associated \ to}$

$\textbf{Grassmannians.}$

The Grassmannian manifolds are important symmetric spaces which can be endowed with some special geometric structures. Generally, the research objects include the Grassmannian of $n$-plane in $\mathbb{F}^{m}$: $G(n,m;\mathbb{F})$
($\mathbb{F}=\mathbb{R},\mathbb{C},\mathbb{H}$), the Grassmannian of oriented $n$-plane in $\mathbb{R}^{m}$: $\widetilde{G}(n,m;\mathbb{R})$, and there exists an only analogue for $\mathbb{F}=\mathbb{O}$, the Cayley plane: $\mathbb{O}P^2$.

There exists standard embedding maps for those Grassmannian manifolds, except those $\widetilde{G}(n,m;\mathbb{R})(n,m-n\neq 2)$, into Euclidean space which can be seen as isolated orbits of some polar group actions, or standard embedding of symmetric $R$-spaces(\cite{hirohashi2000area},\cite{berndt2016submanifolds}). By considering $G(n,m;\mathbb{R})$ and $G(n,m;\mathbb{C})$ as isolated singular orbits of adjoint actions of special orthogonal groups and special unitary groups(\cite{kerckhove1994isolated}), Michael Kerckhove proved almost all their cones are area-minimizing. From the perspective of symmetric $R$-spaces and their canonical embeddings for $\widetilde{G}(2,2l+1;\mathbb{R})(l\geq 3)$, by constructing area-nonincreasing retractions directly, Daigo Hirohashi, Takahiro Kanno and Hiroyuki Tasaki(\cite{hirohashi2000area}) proved the area-minimization of these cones.

Later on, also from this point view, Takahiro Kanno(\cite{kanno2002area}) proved all the cones over the Grassmannian of subspaces $G(n,m;\mathbb{F})$( where $\mathbb{F}=\mathbb{R},\mathbb{C},\mathbb{H})$ and Grassmannian of oriented $2$-planes $\widetilde{G}(2,m;\mathbb{R})(m\geq 5)$ are area-minimizing. Moreover, in the language of Grassmannian, he also proved the area-minimization of cones over Lagrangian Grassmannian $U(n)/O(n)$-the set of all Lagrangian $n$-planes in $\mathbb{R}^{2n}$, complex Lagrangian Grassmannian $Sp(n)/U(n)$-the set of all complex Lagrangian $n$-planes in $\mathbb{C}^{2n}$, $SO(2n)/U(n)$-the Grassmannian of orthogonal complex structures on $\mathbb{R}^{2n}$, $U(2n)/Sp(n)$-the Grassmannian of quaternionic structures on $\mathbb{C}^{2n}$, etc.

In $2015$, the cone over $\mathbb{O}P^2$ was also shown area-minimizing from the above point view by Shinji Ohno and Takashi Sakai(\cite{ohno2015area}) as part of their results. Moreover, they proved that those cones over minimal products of R-spaces which each R-space associate to an minimizing cone(except $\mathbb{R}P^2$) are also area-minimizing.

We also note here, standard embeddings of $\mathbb{F}P^2(\mathbb{F}=\mathbb{R},\mathbb{C},\mathbb{H},\mathbb{O})$ into unit spheres of Euclidean spaces are often called Veronese embeddings, they are focal submanifolds of isoparametric foliations with the number of principal curvatures $g=3$ and of multiplicity $m_{1}=m_{2}=m_{3}=1,2,4,8$. In \cite{tang2020minimizing}, as part of their results, from the point view of isoparametric theory, Zizhou Tang and Yongsheng Zhang also exhibited the area-minimization of cones over $\mathbb{F}P^2(\mathbb{F}=\mathbb{C},\mathbb{H},\mathbb{O})$ and their minimal products.

Recently, also from the point view of Hermitian orthogonal projectors, in \cite{jiao2021area}, we make a further step on showing that the cone over minimal products of Grassmannians is area-minimizing if the dimension of cone is no less than eight, moreover, some new area-minimizing cones of dimension seven are also exhibited.

\vspace{1cm}

We summarize those results for the common Grassmannians in the following table:
\begin{center}
\begin{tabular}{|c|c|c|c|}
\hline
$\textrm{Submanifolds}$ & $\textrm{Ambient \ spaces}$ & $\textrm{Originally \ from}$ & $\textrm{Minimizing}$  \\ \hline
$G(n,m;\mathbb{R})$ & $\{X\in H(m;\mathbb{R})|tr X=0\}$ & $\textrm{Kerckhove(94)}$ & $\textrm{Except} \ \mathbb{R}P^2$ \\ \hline
$G(n,m;\mathbb{C})$ & $\{X\in H(m;\mathbb{C})|tr X=0\}$ & $\textrm{Kerckhove(94)}$ & $\textrm{All}$ \\ \hline
$G(n,m;\mathbb{H})$ & $\{X\in H(m;\mathbb{H})|tr X=0\}$ & $\textrm{Kanno(02)}$ & $\textrm{All}$ \\ \hline
$\mathbb{O}P^2$ & $\{X\in H(3;\mathbb{O})|tr X=0\}$ & $\textrm{Ohno,etc(15)}$ & $\textrm{Yes}$ \\ \hline
$\widetilde{G}(2,2l+1;\mathbb{R})$ & $\mathfrak{so}(2l+1)$ & $\textrm{Hirohashi,etc(00)}$ & $l\geq 3$ \\ \hline
$\widetilde{G}(2,2l;\mathbb{R})$ & $\mathfrak{so}(2l)$ & $\textrm{Kanno(02)}$ & $l\geq 4$ \\ \hline
\end{tabular}
\end{center}
where the ambient space $H(m;\mathbb{F})(\mathbb{F}=\mathbb{R},\mathbb{C},\mathbb{H})$ denote the Euclidean space consists of all Hermitian matrices over $\mathbb{F}$, and the Lie algebra $\mathfrak{so}(2l+1),\mathfrak{so}(2l)$ denote the Euclidean space consists of all skew-symmetric matrices over $\mathbb{R}$. We note here Takahiro Kanno(\cite{kanno2002area}) actually have exhibited the area-minimization for the cones over the left $\widetilde{G}(2,5;\mathbb{R})$ and $\widetilde{G}(2,6;\mathbb{R})$, then:

\vspace{0.5cm}
$\mathbf{Question.}$ Similar to $\widetilde{G}(2,m;\mathbb{R})$, what is the area-minimizing cones over $\widetilde{G}(n,m;\mathbb{R})$ ?

\vspace{0.3cm}
we will give an feasible answer for this question in the final chapter.

\vspace{0.5cm}
$\mathbf{Statement \ of \ Results}$
In chapter $3,4,5$, we re-prove the following results by regarding  $G(n,m;\mathbb{F})$$(\mathbb{F}=\mathbb{R},\mathbb{C},\mathbb{H})$, $\mathbb{O}P^2$ as Hermitian orthogonal projectors uniformly.

\begin{theorem}
The cones over nonoriented real Grassmannian
$G(n,m;\mathbb{R})$, complex Grassmannian $G(n,m;\mathbb{C})$, quaternion Grassmannian $G(n,m;\mathbb{H})$ are area-minimizing, where $m\geq 2n \geq 4$.
\end{theorem}

\begin{theorem}
The cones over $\mathbb{C}P^{m-1}$,$\mathbb{H}P^{m-1}$ are area-minimizing, where $m\geq 2$.
\end{theorem}

\begin{theorem}
The cone over $\mathbb{O}P^2$  is area-minimizing.
\end{theorem}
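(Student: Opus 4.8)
The plan is to prove that the $17$-dimensional cone $C(\mathbb{O}P^{2})$ is area-minimizing by verifying Lawlor's Curvature Criterion, in the same projector framework used for the previous theorems. The first step is to place $\mathbb{O}P^{2}$ in that framework: it is the set of primitive (rank-one) idempotents $P=P^{\ast}=P^{2}$, $\mathrm{tr}\,P=1$, of the exceptional Jordan algebra $\mathfrak{h}_{3}(\mathbb{O})$ of $3\times3$ Hermitian octonionic matrices --- octonion multiplication being non-associative, only the $3\times3$ size behaves like a matrix algebra, which is exactly why $\mathbb{O}P^{2}$ has no higher-dimensional analogue. Translating by $-\tfrac13 I$ and rescaling realizes $\mathbb{O}P^{2}$ as a compact minimal submanifold of the unit sphere $S^{25}\subset\{X\in H(3;\mathbb{O}):\mathrm{tr}\,X=0\}\cong\mathbb{R}^{26}$. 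As for the Veronese embeddings of $\mathbb{R}P^{2},\mathbb{C}P^{2},\mathbb{H}P^{2}$ (the cases $G(1,3;\mathbb{F})$), this is the focal submanifold of a Cartan-type isoparametric family on $S^{25}$, the isoparametric hypersurface being $F_{4}/\mathrm{Spin}(8)$ and the three principal curvatures having multiplicity $8$.

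The second step is to compute the second fundamental form and run it through the criterion. Because $F_{4}$ acts transitively on $\mathbb{O}P^{2}$ with isotropy $\mathrm{Spin}(9)$, acting on the $16$-dimensional tangent space by the real spin representation and on the $9$-dimensional normal space by the vector representation, and because $\mathrm{Spin}(9)$ is transitive on the unit sphere of the normal space, all unit-normal shape operators $A_{\xi}$ are mutually conjugate; each has two opposite eigenvalues, equal to $\pm\tfrac1{\sqrt3}$ in the unit-sphere normalization, with multiplicity $8$. Since the $A_{\xi}$ are all conjugate, the Curvature Criterion for $C(\mathbb{O}P^{2})$ reduces to a single profile ODE --- formally the same ODE as in the $\mathbb{C}P^{2}$, $\mathbb{H}P^{2}$ cases of the previous section, but with $\dim_{\mathbb{R}}\mathbb{F}$ replaced by $8$, so that $A_{\xi}$ has characteristic polynomial $(t-\tfrac1{\sqrt3})^{8}(t+\tfrac1{\sqrt3})^{8}$. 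One then solves this ODE for the generating curve of the associated area-nonincreasing retraction --- equivalently, of its dual vanishing calibration --- on the normal wedge of the cone, determines its maximal interval of existence, and checks that this interval reaches the normal radius of $C(\mathbb{O}P^{2})$, which is an angular invariant of the isoparametric family equal to $\tfrac{\pi}{3}$ regardless of $\mathbb{F}$. Since the relevant interval lengthens with the tangent dimension, the required inequality holds for $\dim_{\mathbb{R}}\mathbb{F}\in\{2,4,8\}$, in particular for $\mathbb{O}$, and fails only for $\dim_{\mathbb{R}}\mathbb{F}=1$, consistent with $C(\mathbb{R}P^{2})$ being the unique Veronese cone that is not area-minimizing.

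I expect the main obstacle to be the first step: extracting the shape operators of $\mathbb{O}P^{2}$ from the projector model. The matrix computations that give the second fundamental form of the projector Grassmannians over $\mathbb{R},\mathbb{C},\mathbb{H}$ use associativity of the entries and, over $\mathbb{O}$, must be replaced either by Jordan-algebraic identities on $\mathfrak{h}_{3}(\mathbb{O})$ (the Freudenthal product together with the quadratic and triple products) or by transferring the known isoparametric data of $F_{4}/\mathrm{Spin}(9)\subset S^{25}$ across the identification with the focal set of the Cartan hypersurface. Once the eigenvalue data is secured, the remaining ODE analysis is routine, but it has to be carried out with the sharp constants rather than crude estimates, because the $\mathbb{R}P^{2}$ case shows the threshold is genuinely reached; with the correct values the $17$-dimensional cone passes it comfortably.
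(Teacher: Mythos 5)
Your proposal follows essentially the same route as the paper: realize $\mathbb{O}P^{2}$ as the trace-one idempotents of $H(3,\mathbb{O})$, translate by $-\tfrac{1}{3}I$ to obtain a minimal submanifold of the sphere of radius $\tfrac{1}{\sqrt{3}}$ in the trace-free part, bound the second fundamental form of the $17$-dimensional cone, and run Lawlor's vanishing-angle test against the normal radius. Your derivation of the shape operators is the one genuinely different ingredient: instead of the paper's explicit coordinate computation on the chart $U_{1}\cong\mathbb{O}^{2}$, you invoke the transitivity of $\mathrm{Spin}(9)$ on the unit normal sphere to conclude that all $A_{\xi}$ are conjugate with spectrum $\pm\tfrac{1}{\sqrt{3}}$ of multiplicity $8$; this is cleaner (it sidesteps the non-associativity issue you worry about, since one only needs the eigenvalues for a single convenient normal such as $E_{22}-E_{33}$) and reproduces the paper's constant $\sup_{\xi}\|h^{\xi}\|^{2}=\tfrac{16}{3}$. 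Two corrections are needed. First, the normal radius of the cone is $\tfrac{2\pi}{3}$, not $\tfrac{\pi}{3}$: the angle $\tfrac{\pi}{3}$ is the focal distance to the \emph{other} focal submanifold of the Cartan family $F_{4}/\mathrm{Spin}(8)$, a congruent but distinct copy of $\mathbb{O}P^{2}$ that is not part of your cone, whereas the first return of a normal great circle to the same $\mathbb{O}P^{2}$ occurs at $\theta=\tfrac{2\pi}{3}$ (the condition $P^{2}=P$ forces the $(1,1)$-entry $\tfrac{2}{3}\cos\theta+\tfrac{1}{3}$ to equal $0$ or $1$). The slip is harmless here only because the estimated vanishing angle is below $8.2^{\circ}$, so $2\theta_{0}<\tfrac{\pi}{3}$ holds anyway, but the stated value should be fixed. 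Second, your remark that $C(\mathbb{R}P^{2})$ is ``not area-minimizing'' overstates what is known: Lawlor's criterion merely fails to decide that case, and whether the Veronese cone over $\mathbb{R}P^{2}$ is area-minimizing remains open. Finally, the numerical step you defer as routine is the actual content of the verification and does go through: with $k=17$ and $\alpha^{2}=\tfrac{16}{3}$ one has $\bigl(\tfrac{12}{17}\bigr)^{2}\cdot\tfrac{16}{3}<3$, so Lawlor's table gives $\theta_{2}<\theta_{2}(12,\sqrt{3})=8.15^{\circ}$, comfortably within the criterion.
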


Additionally, for the left $\widetilde{G}(n,m;\mathbb{R})$-the oriented real Grassmannian manifolds
(we omit all oriented projective spaces over $\mathbb{R}$ which can be identified with the spheres), motivated by the classical Pl\"{u}cker embedding of Grasssmannian in projective space, we also call the natural map Pl\"{u}cker  embedding which considering  $\widetilde{G}(n,m;\mathbb{R})$ as the collection of unit simple vectors in the exterior vector space, in chapter $6$, we establish

\begin{theorem}
Except $\widetilde{G}(2,4;\mathbb{R})$, all the cones over Pl\"{u}cker \ embedding of $\widetilde{G}(n,m;\mathbb{R})$ are area-minimizing.
\end{theorem}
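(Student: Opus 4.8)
The plan is to follow Lawlor's Curvature Criterion, exactly as in the proofs of Theorems 1.1–1.3, but now with the Plücker embedding $\widetilde{G}(n,m;\mathbb{R}) \hookrightarrow S^{N-1} \subset \Lambda^n \mathbb{R}^m$ (with $N = \binom{m}{n}$) as the minimal submanifold whose cone we study. First I would set up the geometry of this embedding: identify $\widetilde{G}(n,m;\mathbb{R})$ with the set of unit simple $n$-vectors $e_1 \wedge \cdots \wedge e_n$, compute that it is a minimal submanifold of $S^{N-1}$ (this is classical — it is the canonical embedding of a symmetric $R$-space, and also an orbit of $SO(m)$ acting on $\Lambda^n \mathbb{R}^m$), and record its dimension $d = n(m-n)$ and the eigenvalue data of its second fundamental form. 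Because the embedding is $SO(m)$-homogeneous and isotropy-irreducible in the relevant directions, the second fundamental form is controlled by a small number of principal-curvature-type constants, so that the shape operator $A_\nu$ in a normal direction $\nu$ has eigenvalues depending only on $\nu$ through a manageable quadratic form; this is what makes Lawlor's ODE tractable.

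The second step is to form the cone $C(\widetilde{G}(n,m;\mathbb{R}))$ over the embedded submanifold, of dimension $d+1 = n(m-n)+1$, and to compute its \emph{normal radius} — the supremum of angles $\theta$ such that the nearest-point projection onto the cone is well-defined on the $\theta$-neighborhood — together with the Lawlor differential equation governing the profile curve of an area-nonincreasing retraction (equivalently, a vanishing calibration). This ODE has the standard form in which one integrates a function built from the eigenvalues of the shape operators and the radial variable; the area-minimization test reduces to checking that the solution of this ODE exists on an interval at least as long as the normal radius. I would reuse verbatim the machinery (normalizations, the relevant lemmas on the ODE and on comparison of existence intervals) already developed in Sections 3–5 for the projector picture, since only the input numerics — dimension, codimension, curvature eigenvalues — change.

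The third step is the case analysis. For $\widetilde{G}(2,m;\mathbb{R})$ with $m \geq 5$ the result is already known (Kanno), and $\widetilde{G}(1,m;\mathbb{R}) = S^{m-1}$ is excluded, so the genuinely new cases are $n \geq 3$ (equivalently, by the duality $\widetilde{G}(n,m;\mathbb{R}) \cong \widetilde{G}(m-n,m;\mathbb{R})$, also $m-n \geq 3$), where the dimension $d+1 = n(m-n)+1 \geq 3\cdot 3 + 1 = 10$ is comfortably large. The heuristic is that large cone dimension forces the ODE solution to exist past the normal radius, so the bulk of the work is a uniform estimate valid for all $n,m$ with $\min(n,m-n) \geq 3$, plus finitely many borderline checks (the smallest cases, e.g. $\widetilde{G}(3,6;\mathbb{R})$, $\widetilde{G}(3,7;\mathbb{R})$). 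The exceptional case $\widetilde{G}(2,4;\mathbb{R})$ must be treated separately: here the Plücker quadric splits as $S^2 \times S^2$ (via self-dual/anti-self-dual $2$-forms), so the embedded submanifold is a Clifford-type torus $S^2(1/\sqrt2)\times S^2(1/\sqrt2)\subset S^5$, and the cone over it is \emph{not} area-minimizing — this is exactly the product-of-spheres cone known to be unstable/non-minimizing from Lawlor's classification, which is why it is the lone exception in the statement.

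The main obstacle I anticipate is making the curvature estimate \emph{uniform} in both $n$ and $m$: the normal space of the Plücker embedding is large and decomposes into several $SO(n)\times SO(m-n)$-irreducible pieces with different shape-operator eigenvalues (unlike the projective-space Veronese cases where the normal bundle is simpler), so bounding $\sup_\nu \|A_\nu\|$ and feeding it into Lawlor's integral requires carefully identifying the "worst" normal direction and showing the resulting existence-interval bound beats the normal radius for every admissible $(n,m)$. I expect to handle this by extracting the dominant quadratic behavior in the curvature eigenvalues, reducing to a one-parameter ODE comparison that is monotone in the dimension, and then verifying the few smallest-dimensional cones by direct numerical integration of the ODE.
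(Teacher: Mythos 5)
Your overall strategy coincides with the paper's: apply Lawlor's Curvature Criterion to the Pl\"{u}cker embedding, bound the second fundamental form, compute the normal radius, compare with the vanishing angle, and dispose of $\widetilde{G}(2,4;\mathbb{R})$ as the Clifford-type cone over $S^{2}(\tfrac{1}{\sqrt2})\times S^{2}(\tfrac{1}{\sqrt2})\subset S^{5}$, which is unstable by Lawlor's classification. However, your proposal defers exactly the two computations that carry the proof, and for one of them no method is indicated, which is a genuine gap. First, the normal radius: you say you would ``compute'' it, but this is not routine --- Kanno needed Weyl-group machinery even for $n=2$. The paper's argument is a specific algebraic idea: if $P_{0}+T=\lambda P$ with $T$ normal and $P$ a unit simple $n$-vector, then characterizing decomposability of $w=P_{0}+T$ via the kernel of the map $v\mapsto v\wedge w$ forces every coefficient of $T$ to vanish, so the normal radius is exactly $\tfrac{\pi}{2}$ for every $(n,m)$. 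Without this (or an equivalent) argument the comparison with the vanishing angle cannot even be set up.

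Second, your anticipated obstacle about uniformity of the curvature bound is, in the event, not an obstacle at all, and worrying about a ``worst normal direction'' among many irreducible pieces points in the wrong direction. The moving-frame computation shows that the second fundamental form pairs nontrivially only with the normal directions $E_{jk\beta\gamma}$ obtained by replacing exactly two of the $e_{i}$'s (the $q=2$ axis vectors), with coefficients $h_{(i\alpha)(l\tau)}^{(jk\beta\gamma)}=\delta_{il}^{jk}\delta_{\alpha\tau}^{\beta\gamma}$ and with all $q>2$ directions contributing zero; hence $\sup_{\xi}\Vert h^{\xi}\Vert^{2}=4$ independently of $n$ and $m$. Combined with the uniform normal radius $\tfrac{\pi}{2}$, the comparison of twice the vanishing angle with the normal radius is then monotone in the cone dimension and reduces to the few smallest cases, as you correctly anticipate in your third step. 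So the skeleton of your plan is the right one, but as written it is an outline whose decisive steps --- the simple-vector/decomposability argument for the normal radius and the explicit frame computation giving the dimension-free bound $4$ --- remain to be supplied.
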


This paper is organized as follows:

In chapter $2$, we review the knowledge of standard embeddings of Grassmannian manifolds into Euclidean spaces as Hermitian orthogonal projectors, for later citations, we add an proof for the equivalent minimality of an immersed submanifold in sphere and its cone based on the method of moving frame(\cite{Chern1983}), especially the quantitative relations between their second fundamental forms which were used in \cite{lawlor1991sufficient} on many occasions.

In chapter $3$, we exhibit that $G(n,m;\mathbb{F})(\mathbb{F}=\mathbb{R},\mathbb{C},\mathbb{H})$ and $G(m-n,m;\mathbb{F})$ can be embedded into the same one Euclidean sphere as a pair of opposite minimal submanifolds simultaneously, the associated cones are opposite, and these cones are proved area-minimizing by Lawlor's Curvature Criterion. As special cases, the proof for area-minimization of cones over projective spaces are given separated in chapter $4$.

In chapter $5$, based on \cite{harvey1990spinors}, also considering the Cayley plane $\mathbb{O}P^2$ as the set of Hermitian orthogonal projection operators, we prove that this cone is area-minimizing.

In chapter $6$, we focus on the cones over oriented real Grassmannians $\widetilde{G}(n,m;\mathbb{R})$, the known area-minimizing cones are those cones over canonical embeddings of $\widetilde{G}(2,m;\mathbb{R})(m\geq 5)$ into $\mathfrak{so}(m)$ as symmetric $R$-spaces given in \cite{hirohashi2000area} and \cite{kanno2002area}. Notice that the ambient spaces are the Euclidean spaces $\mathfrak{so}(m)$ which can be identified with the exterior vector space $\wedge^{2}\mathbb{R}^{m}$. Similarly, we extend cones over $\widetilde{G}(2,m;\mathbb{R})$ to cones over the general oriented Grassmannians $\widetilde{G}(n,m;\mathbb{R})$ by embedding them in $\wedge^{n}\mathbb{R}^{m}$ (\cite{harvey1982calibrated},\cite{morgan1985exterior},\cite{gluck1995volume})-the so called Pl\"{u}cker embedding. We note here that the second fundamental forms and some related geometries associated with this embedding map have been studied by Wei Huan Chen in\cite{chen1988the}, meanwhile, we prove that the normal radius of these cones are all equal $\frac{\pi}{2}$. The final conclusion is that, the cone over $\widetilde{G}(2,4;\mathbb{R})$ is unstable, except this, all the others are area-minimizing.

\section{Preliminaries}
\subsection{Standard embedding of Grassmannian into Euclidean space}
The embedding of Grassmannian into Euclidean space as Hermitian orthogonal projectors is an well-known fact(\cite{kobayashi1968isometric}), for projective spaces and Cayley plane, these embeddings are just their first standard minimal immersions into unit spheres associated to the first eigenvalues of Laplace operator(\cite{chen1984total}).

Throughout this paper, $\mathbb{F}$ will denote the field of real numbers $\mathbb{R}$, the field of complex numbers $\mathbb{C}$, the normed quaternion associative algebra $\mathbb{H}$ unless otherwise stated, this subchapter is mainly based on \cite{chen1984total}, where the author had given detailed researches for the cases of projective spaces.

We denote \begin{equation}\notag
 d=dim_{R}\mathbb{F}=
 \begin{cases}
 1 & \text{if} \ \mathbb{F}=\mathbb{R},\\
 2 & \text{if} \ \mathbb{F}=\mathbb{C},\\
 4 & \text{if} \ \mathbb{F}=\mathbb{H}.\\
 \end{cases}
 \end{equation}

Let $\mathbb{F}^{m}$ be the right Hermitian vector space of column $m$-vectors with the inner product
\begin{equation}\notag
\langle z,w\rangle_{\mathbb{F}}=z^{*}w=\sum_{i=1}^{m}\bar {z}^{i} w^{i},
\end{equation}
where $z=(z^{i}),w=(w^{i})\in \mathbb{F}^{m}$, $\bar {z}^{i}$ is the standard conjugation  and $z^{*}=\bar{z}^{t}$ denote the conjugate transpose.

The Grassmannian manifold $G(n,m;\mathbb{F})$ is the set of all $n$-dimensional subspaces in $\mathbb{F}^{m}$, when $n=1$, it is just the projective space. Denote $\widetilde{G}(n,m;\mathbb{R})$ be the set of oriented $n$-planes in $\mathbb{R}^n$, it is the double cover of $G(n,m;\mathbb{R})$.

We use the following notations: $M(m;\mathbb{F})$ denote the space of all $m\times m$ matrices over $\mathbb{F}$, $H(m;\mathbb{F})\subset M(m;\mathbb{F})$ denote the space of all Hermitian matrices over $\mathbb{F}$,
\begin{equation}\notag
H(m;\mathbb{F})=\{A\in M(m;\mathbb{F})|A^{*}=A\}.
\end{equation}

Let $U(m;\mathbb{F})=\{A\in M(m;F)|AA^{*}=A^{*}A=I\}$ denote the $\mathbb{F}$-unitary group(For $\mathbb{F}=\mathbb{H}$, we will quickly see that $A$ also own two-sided inverse by considering the complex representation of $ M(m;\mathbb{H})$,\cite{zhang1997quaternions}), it preserve the Hermitian inner product of $\mathbb{F}^{m}$. Moreover, $U(m;\mathbb{R})=O(m)$, $U(m;\mathbb{C})=U(m)$, $U(m;\mathbb{H})=Sp(m)$.

$H(m;\mathbb{F})$ can be identified with real Euclidean space $E^{N}$ with the inner product: \begin{equation}\notag
g(A,B)=\frac{1}{2} \mathrm{Re}\ tr_{\mathbb{F}}(AB),
\end{equation}
where $A,B\in H(m;\mathbb{F})$,  $N=m+dm(m-1)/2$.

$U(m;\mathbb{F})$ has an adjoint action $\rho$ on $H(m;\mathbb{F})$ given by
\begin{equation}\notag
\begin{aligned}
\rho :U(m;\mathbb{F})\times H(m;\mathbb{F})&\rightarrow H(m;\mathbb{F})\\
 (Q,P) &\mapsto QPQ^{*},
\end{aligned}
\end{equation}
where $Q\in U(m;\mathbb{F}), P\in H(m;\mathbb{F})$.

\begin{prop}
$\rho^{*}g=g$ on $H(m;\mathbb{F})$.
\end{prop}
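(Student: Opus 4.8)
The plan is to verify the invariance $g(\rho(Q)A,\rho(Q)B)=g(A,B)$ directly, for every $Q\in U(m;\mathbb{F})$ and $A,B\in H(m;\mathbb{F})$, after first noting that $\rho$ is well defined: since $P^{*}=P$ we have $(QPQ^{*})^{*}=QP^{*}Q^{*}=QPQ^{*}$, so $\rho(Q)$ indeed maps $H(m;\mathbb{F})$ to itself. (That $\rho$ is an action, $\rho(Q_{1}Q_{2})=\rho(Q_{1})\rho(Q_{2})$, follows from associativity and is not needed for the statement.)

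First I would expand, using the definition of $g$ together with associativity of matrix multiplication,
\begin{equation}\notag
g(QAQ^{*},QBQ^{*})=\tfrac12\,\mathrm{Re}\,tr_{\mathbb{F}}\big(QAQ^{*}QBQ^{*}\big).
\end{equation}
Because $Q\in U(m;\mathbb{F})$ satisfies $Q^{*}Q=I$, the two middle factors cancel and the right-hand side equals $\tfrac12\,\mathrm{Re}\,tr_{\mathbb{F}}(QABQ^{*})$. It then remains to see that $\mathrm{Re}\,tr_{\mathbb{F}}$ is invariant under cyclic permutation of a product, so that $\mathrm{Re}\,tr_{\mathbb{F}}(QABQ^{*})=\mathrm{Re}\,tr_{\mathbb{F}}(Q^{*}QAB)=\mathrm{Re}\,tr_{\mathbb{F}}(AB)=2g(A,B)$.

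For $\mathbb{F}=\mathbb{R},\mathbb{C}$ this last step is just the usual cyclic property of the trace. For $\mathbb{F}=\mathbb{H}$ some care is needed, since $\mathbb{H}$ is noncommutative and $tr_{\mathbb{H}}(XY)\neq tr_{\mathbb{H}}(YX)$ in general; writing $a=a_{0}+\mathbf{a}$, $b=b_{0}+\mathbf{b}$ in real and imaginary parts one checks $\mathrm{Re}(ab)=a_{0}b_{0}-\mathbf{a}\cdot\mathbf{b}=\mathrm{Re}(ba)$, and hence entrywise
\begin{equation}\notag
\mathrm{Re}\,tr_{\mathbb{H}}(XY)=\sum_{i,j}\mathrm{Re}\big(X_{ij}Y_{ji}\big)=\sum_{i,j}\mathrm{Re}\big(Y_{ji}X_{ij}\big)=\mathrm{Re}\,tr_{\mathbb{H}}(YX),
\end{equation}
which is exactly what the argument requires. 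Alternatively, one may pass to the complex representation of $M(m;\mathbb{H})$ as in \cite{zhang1997quaternions}, under which $\mathrm{Re}\,tr_{\mathbb{H}}$ is realized (up to a constant) as an honest complex trace and the cyclic identity is automatic. Chaining the displays gives $g(\rho(Q)A,\rho(Q)B)=g(A,B)$, i.e. $\rho^{*}g=g$.

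There is no real obstacle here; the only subtlety worth isolating is the failure of the naive cyclic property of $tr_{\mathbb{H}}$, which is why the proof leans on the ``real part of the trace'' version of that identity rather than on the cyclic property of $tr$ itself.
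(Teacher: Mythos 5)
Your proof is correct and follows essentially the same route as the paper: cancel the middle $Q^{*}Q=I$ and then use the quaternionic identity $\mathrm{Re}(pq)=\mathrm{Re}(qp)$ entrywise to move $Q^{*}$ around the trace. The only (cosmetic) difference is that you package this as the matrix-level identity $\mathrm{Re}\,tr_{\mathbb{H}}(XY)=\mathrm{Re}\,tr_{\mathbb{H}}(YX)$, whereas the paper carries out the same index manipulation directly on the triple product $U_{ij}(AB)_{jk}U^{*}_{ki}$.
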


\begin{proof}
The cases $\mathbb{F}=\mathbb{R}$, $\mathbb{C}$ are trivial and $\mathrm{Re}$ is not needed in the definition of $g$. For $\mathbb{F}=\mathbb{H}$, choose $U\in Sp(m)$, then we need to verify: $\mathrm{Re}\ tr_{\mathbb{F}}(UABU^{*})= \mathrm{Re}\ tr_{\mathbb{F}}(AB)$, the left hand is $\sum_{i,j,k}\mathrm{Re} \ U_{ij}(AB)_{jk}U^{*}_{ki}$, since $\mathrm{Re} \ (pq)=\mathrm{Re} \ (qp)$, then
\begin{equation}\notag
\sum_{i,j,k}\mathrm{Re} \ U_{ij}(AB)_{jk}U^{*}_{ki}=\sum_{i,j,k}\mathrm{Re} \ U^{*}_{ki}U_{ij}(AB)_{jk}=\mathrm{Re} \ tr_{\mathbb{F}}(AB).
\end{equation}
\end{proof}

For $L\in G(n,m;\mathbb{F})$ and $L^{\perp}\in G(m-n,m;\mathbb{F})$, choose an $\mathbb{F}$-unitary basis $\{z_{1},\ldots,z_{m}\}$ of $\mathbb{F}^{m}$ such that $L=span_{\mathbb{F}}\{z_{1},\ldots,z_{n}\}$ and $L^{\perp}=span_{\mathbb{F}}\{z_{n+1},\ldots,z_{m}\}$, any other orthonormal basis of $L$ are given by $\{(z_{1},\ldots,z_{n})A|A\in U(n;\mathbb{F})\}$. We denote $P_{L}$ the operator of $\mathbb{F}$-Hermitian orthogonal projection onto the $n$-plane $L\in G(n,m;\mathbb{F})$, let $f$ be the matrix $(z_{1},\ldots,z_{n})$, $g$ be the matrix $(z_{n+1},\ldots,z_{m})$, then for any vector $w\in \mathbb{F}^{m}$,
\begin{equation}\notag
P_{L}(w)=\sum_{i=1}^{n}z_{i}\langle z_{i},w\rangle_{\mathbb{F}}=(\sum_{i=1}^{n}z_{i}z_{i}^{*})w=ff^{*}w,
\end{equation}
and
\begin{equation}\notag
P_{L^{\perp}}(w)=\sum_{\alpha=n+1}^{m}z_{\alpha}\langle z_{\alpha},w\rangle_{\mathbb{F}}=(\sum_{\alpha=n+1}^{m}z_{\alpha}z_{\alpha}^{*})w=gg^{*}w,
\end{equation}
easy to see $P_{L}+P_{L^{\perp}}=ff^{*}+gg^{*}=I$, $P_{L}(v)=0$ if and only if $v\in L^{\perp}$.

For an $\mathbb{F}$-unitary basis $f=(z_{1},\dots,z_{n})$ of $L$, we define $\tilde{\varphi} (f)=\sum_{i=1}^{n}z_{i}z_{i}^{*}=ff^{*}$, since for every $i\in\{1,\ldots,n\}$, $tr_{\mathbb{F}}(z_{i}z_{i}^{*})=|z_{i}|^{2}=1$, then we see $ff^{*}$ is of $\mathbb{F}$-trace $n$. Let $\tilde{f}=fA$ be another $\mathbb{F}$-unitary basis for $L$, then $\tilde{\varphi} (\tilde{f})=\tilde{f}\tilde{f}^{*}=fAA^{*}f^{*}=ff^{*}$ is still of $\mathbb{F}$-trace $n$, so $\tilde{\varphi}$ descends to an embedding map:
\begin{equation}\notag
\begin{aligned}
\varphi:G(n,m;\mathbb{F})&\rightarrow H(m;\mathbb{F})\\
 L &\mapsto P_{L},
\end{aligned}
\end{equation}
and $P_{L}^{2}=P_{L}$, $L=\{z\in\mathbb{F}^{m}|P_{L}z=z\}$, we note here though the trace of an quaternion linear endomorphism of $\mathbb{H}^{m}$ is not well defined(\cite{zhang1997quaternions}), but for the Hermitian orthogonal projective operator in the above, it does be.

\begin{prop}
Hence we get an isomorphism:
\begin{equation}\notag
G(n,m;\mathbb{F})\cong \varphi(G(n,m;\mathbb{F}))=\{P\in H(m;\mathbb{F})|P^{2}=P, tr_{\mathbb{F}}P=n\}.
\end{equation}
\end{prop}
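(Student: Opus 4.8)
Proof proposal for Proposition 2.5 (the isomorphism $G(n,m;\mathbb{F})\cong\{P\in H(m;\mathbb{F})\mid P^2=P,\ \mathrm{tr}_{\mathbb{F}}P=n\}$).

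The plan is to verify that the map $\varphi\colon L\mapsto P_L$ constructed above is a well-defined bijection onto the set $\mathcal{P}_n:=\{P\in H(m;\mathbb{F})\mid P^2=P,\ \mathrm{tr}_{\mathbb{F}}P=n\}$, and that it is compatible with the smooth (indeed homogeneous) structures on both sides. First I would record what has already been checked in the text preceding the statement: for a unitary frame $f=(z_1,\dots,z_n)$ of $L$ one has $P_L=ff^*\in H(m;\mathbb{F})$, the value is independent of the choice of frame since $fA(fA)^*=ff^*$ for $A\in U(n;\mathbb{F})$, one has $P_L^2=ff^*ff^*=f(f^*f)f^*=ff^*=P_L$ because $f^*f=I_n$, and $\mathrm{tr}_{\mathbb{F}}P_L=\sum_i\mathrm{tr}_{\mathbb{F}}(z_iz_i^*)=\sum_i|z_i|^2=n$. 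Hence $\varphi$ maps into $\mathcal{P}_n$, and $L=\{z\in\mathbb{F}^m\mid P_Lz=z\}$ recovers $L$ from $P_L$, so $\varphi$ is injective.

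The substantive point is surjectivity: given $P\in\mathcal{P}_n$, I must produce an $n$-plane $L$ with $P=P_L$. Since $P^*=P$ and $P^2=P$, the space $\mathbb{F}^m$ splits as $\ker P\oplus\mathrm{im}\,P$ orthogonally (for $z\in\mathrm{im}\,P$, $w\in\ker P$ write $z=Pz'$ and compute $\langle z,w\rangle_{\mathbb{F}}=\langle Pz',w\rangle_{\mathbb{F}}=\langle z',Pw\rangle_{\mathbb{F}}=0$ using $P^*=P$), and $P$ is the orthogonal projector onto $L:=\mathrm{im}\,P$. Choosing a unitary basis $z_1,\dots,z_k$ of $L$ gives $P=\sum_{i=1}^k z_iz_i^*$, hence $\mathrm{tr}_{\mathbb{F}}P=k$; the hypothesis $\mathrm{tr}_{\mathbb{F}}P=n$ forces $k=n$, so $L\in G(n,m;\mathbb{F})$ and $P=P_L=\varphi(L)$. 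The one subtlety in the quaternionic case — which the authors flag in the paragraph above — is that $\mathrm{tr}_{\mathbb{F}}$ of a general $\mathbb{H}$-linear endomorphism is not basis-independent; here it is, precisely because on the subspace of Hermitian projectors the trace agrees with the honest trace of the underlying real (or complex-linear) operator, namely $\dim_{\mathbb{R}}L/d$, so the argument goes through verbatim.

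Finally, to upgrade the set-theoretic bijection to an isomorphism of manifolds (in fact of homogeneous spaces), I would observe that the adjoint action $\rho(Q,P)=QPQ^*$ of $U(m;\mathbb{F})$ on $H(m;\mathbb{F})$ preserves $\mathcal{P}_n$ (it preserves $P^*=P$, $P^2=P$, and $\mathrm{tr}_{\mathbb{F}}$, the last by Proposition 2.1 together with the trace computation above), that it is transitive on $\mathcal{P}_n$ since $U(m;\mathbb{F})$ acts transitively on $n$-planes, and that under $\varphi$ it intertwines with the standard $U(m;\mathbb{F})$-action on $G(n,m;\mathbb{F})$, i.e.\ $\varphi(Q\cdot L)=Q\,\varphi(L)\,Q^*$; the isotropy group of a fixed plane is $U(n;\mathbb{F})\times U(m-n;\mathbb{F})$ on both sides, so $\varphi$ is an equivariant diffeomorphism. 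I expect the main (mild) obstacle to be the bookkeeping in the quaternionic trace identity — making precise that $\mathrm{tr}_{\mathbb{F}}P_L$ is genuinely well-defined and equals $n$ independent of frame — but since each $z_iz_i^*$ is an honest rank-one Hermitian projector with $\mathrm{tr}_{\mathbb{F}}=1$, additivity of $\mathrm{tr}_{\mathbb{F}}$ on such sums settles it. No deep input is needed beyond the spectral decomposition of a Hermitian idempotent and Proposition 2.1.
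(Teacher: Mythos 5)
Your proposal is correct, and its core is the same as the paper's: prove surjectivity by recognizing a Hermitian idempotent $P$ as the orthogonal projector onto an $\mathbb{F}$-subspace and then use the trace condition to pin down the dimension. The mechanics differ in one worthwhile way. The paper diagonalizes $P$ by a unitary matrix $U^{*}PU=\mathrm{diag}(I_{r},0)$ (citing the quaternionic spectral theorem from Rodman) and then establishes $r=n$ by passing to the real matrix representation $\chi$ to show $tr_{\mathbb{F}}$ of a Hermitian matrix is invariant under unitary conjugation. You instead split $\mathbb{F}^{m}=\ker P\oplus\mathrm{im}\,P$ orthogonally by hand, write $P=\sum_{i=1}^{k}z_{i}z_{i}^{*}$ over a unitary basis of $\mathrm{im}\,P$, and read off $tr_{\mathbb{F}}P=k$ term by term from $tr_{\mathbb{F}}(z_{i}z_{i}^{*})=|z_{i}|^{2}=1$; this sidesteps both the spectral theorem citation and the representation $\chi$, and is the more elementary route to the only delicate point (well-definedness of the quaternionic trace on this class of matrices). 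The two are of course equivalent --- completing your unitary basis of $\mathrm{im}\,P$ to one of $\mathbb{F}^{m}$ is exactly the paper's diagonalization. Your closing paragraph upgrading the bijection to an equivariant diffeomorphism is not in the paper's proof but matches the discussion the authors give immediately afterwards.
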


\begin{proof}
In the proof, we need some knowledge about quaternion linear algebra, a good reference is \cite{rodman2014topics}.
Follow the above discussions, we only need to prove the converse part, which is that for every element of $\{P\in H(m;\mathbb{F})|P^{2}=P, tr_{\mathbb{F}}P=n\} $, there exists an $L\in G(n,m;\mathbb{F})$ associated with it, define $S=\{z\in\mathbb{F}^{m}|Pz=z\}$, we want  to show $dim_{\mathbb{F}}S=n$.  Note for every $P\in H(m;\mathbb{F})$ and $P^2=P$, there exists an unitary matrix $U\in U(m;\mathbb{F})$, such that
\begin{equation}\notag
U^{*}PU=\Lambda=
\begin{pmatrix}
I_{r} & 0\\
0 &0
\end{pmatrix}
\end{equation}
for an nonnegative number $r=0,\ldots,n$, where in case of $\mathbb{F}=\mathbb{H}$, see Theorem 5.3.6 in \cite{rodman2014topics} by multiplying a series of elementary matrix both in the left and right.

For $S=\{z\in\mathbb{F}^{m}|Pz=z\}$, denote $z=Uw$, it is $\mathbb{F}$-linear isomorphism since $U$ is nonsingular, then $S=\{w\in\mathbb{F}^{m}|\Lambda w=w\}$, hence $dim_{\mathbb{F}}S=r$.

To show $r=n$, we need to verify that the $tr_{\mathbb{F}}$ function of an Hermitian matrix(note all its diagonal elements are real numbers) is invariant under unitary adjoint action $\rho$, this is trivial for $\mathbb{F}=\mathbb{R},\mathbb{C}$.

For $P\in H(m;\mathbb{H})$, we consider its real matrix representation $\chi$ which satisfy the properties: $\chi(PT)=\chi(P)\chi(T)$ and $\chi(P^{*})=(\chi(P))^{t}$ for any $P,T\in M(m;\mathbb{H})$(\cite{rodman2014topics}). Then
\begin{equation}\notag
tr_{\mathbb{H}}P=n \Leftrightarrow tr\chi(P)=4n \Leftrightarrow tr\chi(\Lambda)=tr\chi(U^{*}PU)=tr \chi(P)=4n,
\end{equation}
shows that $r=n$.
\end{proof}

\begin{remark}
An alternative form $G(n,m;\mathbb{H})=\{P\in H(m;\mathbb{H})|P^{2}=P, tr_{\mathbb{R}}P=4n\}$ is given in \cite{harvey1990spinors}.
\end{remark}

From the above proposition, $\rho$ is an isometric, transitive action when restricted on $\varphi(G(n,m;\mathbb{F}))$, let $P_{0}=\begin{pmatrix}I_{n} & 0 \\ 0 & 0\end{pmatrix}$ be the origin of $G(n,m;\mathbb{F})$, $P_{0}^{\perp}=\begin{pmatrix} 0 & 0 \\ 0 & I_{m-n}\end{pmatrix}$ be the origin of $G(m-n,m;\mathbb{F})$, then $G(n,m;\mathbb{F})$ is the $\rho$-orbit through $P_{0}$, $G(m-n,m;\mathbb{F})$ is the $\rho$-orbit through $P_{0}^{\perp}$.

In the following, we will always identify $G(n,m;\mathbb{F})$ with $\varphi(G(n,m;\mathbb{F}))$.

Additionally,
\begin{prop}
denote $P_{L}^{\perp}=P_{L^{\perp}}$, then
\begin{equation}\label{fff}
P_{L}-P_{L}^{\perp}=2P_{L}-I\in U(m;\mathbb{F})
\end{equation}
is the totally geodesic Cartan embedding of the symmetric space into associated isometry group(\cite{cheeger2008comparison},\cite{uhlenbeck1989harmonic}).
\end{prop}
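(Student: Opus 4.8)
The plan is to split the assertion into three parts: the algebraic identity $P_{L}-P_{L}^{\perp}=2P_{L}-I$, the unitarity $2P_{L}-I\in U(m;\mathbb{F})$, and the identification of the resulting map with the totally geodesic Cartan embedding. The first is immediate from the relation $P_{L}+P_{L^{\perp}}=I$ recorded above: since $P_{L}^{\perp}=P_{L^{\perp}}=I-P_{L}$, subtracting gives $P_{L}-P_{L}^{\perp}=2P_{L}-I$. Writing $s:=2P_{L}-I$, unitarity is equally short: $P_{L}$ is Hermitian, so $s^{*}=s$, while $P_{L}^{2}=P_{L}$ yields $s^{2}=4P_{L}^{2}-4P_{L}+I=I$; hence $ss^{*}=s^{*}s=s^{2}=I$ and $s\in U(m;\mathbb{F})$. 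Thus $s$ is a Hermitian involution, namely the reflection fixing $L$ and negating $L^{\perp}$.

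Next I would realize the map $L\mapsto 2P_{L}-I$ as a conjugation orbit. By the transitivity of the adjoint action $\rho$ established above, write $P_{L}=QP_{0}Q^{*}$ with $Q\in U(m;\mathbb{F})$; then
\begin{equation}\notag
2P_{L}-I=Q(2P_{0}-I)Q^{*}=Qs_{0}Q^{*},\qquad s_{0}:=\begin{pmatrix}I_{n}&0\\0&-I_{m-n}\end{pmatrix}=2P_{0}-I .
\end{equation}
Hence the image is exactly the $\rho$-orbit (conjugacy class) of the involution $s_{0}$ in $G:=U(m;\mathbb{F})$. Now let $\sigma=\mathrm{Inn}(s_{0})$ be the inner involution $Q\mapsto s_{0}Qs_{0}$ of $G$; its fixed-point group is the centralizer of $s_{0}$, which consists precisely of the block-diagonal unitaries, i.e. $K=U(n;\mathbb{F})\times U(m-n;\mathbb{F})$, so $G/K\cong G(n,m;\mathbb{F})$. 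In this inner case the Cartan embedding is the orbit map $gK\mapsto gs_{0}g^{-1}$ (which agrees with the usual map $gK\mapsto g\sigma(g)^{-1}$ up to the right translation by $s_{0}$, an isometry of the bi-invariant metric), and since $Q^{*}=Q^{-1}$ it coincides with $L\mapsto 2P_{L}-I$.

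It remains to prove that this embedding is totally geodesic, which is the only substantive step. I would argue that the image is a connected component of the set of involutions $\{x\in G:x^{2}=I\}$. This set is exactly the fixed-point set of the inversion map $x\mapsto x^{-1}$, which is the geodesic symmetry of $G$ at the identity and hence an isometry for the bi-invariant metric; because connected components of the fixed-point set of an isometric involution are totally geodesic submanifolds, it suffices to note that the $\rho$-orbit of $s_{0}$ is connected (a continuous image of the connected group $G$) and is open-and-closed in $\{x^{2}=I\}$, as the conjugacy classes of involutions are the connected components, separated by the dimension $n$ of the $(+1)$-eigenspace. Equivalently, one may verify total geodesy infinitesimally: the Cartan decomposition $\mathfrak{g}=\mathfrak{k}\oplus\mathfrak{p}$ associated with $\sigma$ has $\mathfrak{p}$ a Lie triple system, $[[\mathfrak{p},\mathfrak{p}],\mathfrak{p}]\subseteq\mathfrak{p}$, which is automatic for a symmetric pair and is exactly the tangential condition for the orbit to be totally geodesic.

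Finally, I expect the main (and essentially only) obstacle to be bookkeeping rather than conceptual: carrying the bi-invariant-metric and fixed-point-set arguments uniformly over $\mathbb{F}=\mathbb{R},\mathbb{C},\mathbb{H}$, since for $\mathbb{F}=\mathbb{H}$ one works with $G=Sp(m)$ and must invoke the quaternion linear-algebra facts (transitivity of $\rho$ and the normal form of $P_{L}$) already secured earlier. All three groups $O(m),U(m),Sp(m)$ are compact, so the isometry and total-geodesy arguments are identical, and the identification of the centralizer of $s_{0}$ with $K$ is the same block computation in each field. The cited references \cite{cheeger2008comparison} and \cite{uhlenbeck1989harmonic} supply the standard fact that the Cartan embedding is totally geodesic, so this last part can be quoted rather than reproven.
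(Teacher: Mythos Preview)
Your proof is correct and follows essentially the same route as the paper: both identify $L\mapsto 2P_{L}-I$ with the Cartan map $A\mapsto A\sigma(A^{-1})$ for the inner involution $\sigma(A)=s_{0}As_{0}$, $s_{0}=\begin{pmatrix}I_{n}&0\\0&-I_{m-n}\end{pmatrix}$, and note that the two agree up to right translation by $s_{0}$. Your version is in fact more complete than the paper's, since you verify unitarity directly and supply an argument for total geodesy (fixed-point set of the inversion isometry, or Lie triple system), whereas the paper simply cites \cite{cheeger2008comparison},\cite{uhlenbeck1989harmonic}; the only cosmetic slip is calling $G$ connected when $\mathbb{F}=\mathbb{R}$---just replace $O(m)$ by $SO(m)$ there, which already acts transitively on the orbit.
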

\begin{proof}
 Assume $A=\begin{pmatrix}A_{1}&A_{2}\\A_{3}&A_{4}\end{pmatrix}\in U(m;\mathbb{F})$ and $f=\begin{pmatrix}A_{1}\\A_{3}\end{pmatrix}$(respectively, $g=\begin{pmatrix}A_{2}\\A_{4}\end{pmatrix}$) is an $\mathbb{F}$-unitary basis of $L\in G(n,m;\mathbb{F})$(respectively, $L^{\perp}\in G(m-n,m;\mathbb{F})$), then $P_{L}=ff^{\ast}$ and $P_{L^{\perp}}=gg^{\ast}$, the unitary condition is
\begin{equation}\notag
ff^{\ast}+gg^{\ast}=I,
\end{equation}

the canonical involution is
\begin{equation}\notag
\begin{aligned}
\sigma :U(m;\mathbb{F})& \longrightarrow U(m;\mathbb{F})\\
A&\mapsto SAS,
\end{aligned}
\end{equation}
where
\begin{equation}\notag
S=\begin{pmatrix}I_{n}&0\\0&-I_{m-n}\end{pmatrix}
\end{equation}.

Then,
\begin{equation}\notag
  A\sigma(A^{-1})S=ASA^{\ast}=ff^{\ast}-gg^{\ast}=2P_{L}-I
\end{equation}

So the Cartan embedding in \eqref{fff} is equivalent to the definition in \cite{cheeger2008comparison} up to an constant right translation.
\end{proof}

\begin{remark}
The unitary transformation $I-2P_{L}$ is actually the reflection along the subspace $L$ through $L^{\perp}$(\cite{harvey1990spinors}).
\end{remark}

\subsection{Tangent and normal spaces of Grassmannian}
We choose a curve $\alpha (t)\in G(n,m;\mathbb{F})$, $\alpha (0)=P, \alpha^{'}(0)=X\in T_{P}G(n,m;\mathbb{F}) \subset H(m;\mathbb{F})$. Then derivative $\alpha (t)\alpha(t)=\alpha (t)$, we obtain $PX+XP=X$, i.e. $T_{P}G(n,m;\mathbb{F})\subset \{X\in H(m;\mathbb{F})|XP+PX=X\}$, they are all linear spaces over $\mathbb{R}$. Assume $P=QP_{0}Q^{*}$, then set $X=QX_{0}Q^{*}$, we get $X_{0}P_{0}+P_{0}X_{0}=X_{0}$. Next, we compute the dimension of the subspace $\{X_{0}\in H(m;\mathbb{F})|X_{0}P_{0}+P_{0}X_{0}=X_{0}\}$.

For any $X_{0}\in H(m;\mathbb{F})$, we put

\begin{equation}\notag
X_{0}=\begin{pmatrix}
		A & B\\B^{*} & D
		\end{pmatrix} ,A\in H(n;\mathbb{F}), D\in H(m-n;\mathbb{F}),
 \end{equation}
and  $B$  is  an arbitrary $n\times(m-n)$ matrix  over $\mathbb{F}$.

 Then \begin{equation}\notag
 X_{0}P_{0}+P_{0}X_{0}=X_{0}\Leftrightarrow X_{0}
 =\begin{pmatrix}
		0 & B\\B^{*} & 0
		\end{pmatrix},
\end{equation}
and the subspace $\{X_{0}\in H(m;\mathbb{F})|X_{0}P_{0}+P_{0}X_{0}=X_{0}\}$ has the same dimension $dn(m-n)$ with the tangent space at $P_{0}$, so does for every point of $G(n,m;\mathbb{F})$, hence:
\begin{equation}\notag
T_{P}G(n,m;\mathbb{F})=T_{P^{\perp}}G(m-n,m;\mathbb{F})=\{X \in H(m;\mathbb{F})|XP+PX=X\}.
\end{equation}

$G(n,m;\mathbb{F})$ is an orbit of $\rho$ through the origin $P_{0}$, the for another point $P=QP_{0}Q^{*}=Q\cdot P_{0}$, $Q\in U(m;\mathbb{F})$, let $X_{0}\in T_{P_{0}}G(n,m;\mathbb{F})$, then $X=Q\cdot X_{0}=QX_{0}Q^{*}\in T_{P}G(n,m;\mathbb{F})$ builds isomorphism between tangent spaces, it is just isotropy representation when $Q$ belongs to the isotropy subgroup at $P_{0}$.

Let $T_{P}H(m;\mathbb{F})=T_{P}G(n,m;\mathbb{F})\oplus N_{P}G(n,m;\mathbb{F})$. A vector $\xi$ is in the normal space $N_{P}G(n,m;\mathbb{F})$ if and only if $g(X,\xi)=0$ for all $X\in T_{P}G(n,m;\mathbb{F})$, for $P=QP_{0}Q^{*}$, we set $X=QX_{0}Q^{*}=Q\begin{pmatrix}
		0 & B\\B^{*} & 0
		\end{pmatrix}Q^{*}$,
$\xi=Q\xi_{0}Q^{*}=Q\begin{pmatrix}
		X & Y\\Y^{*} & Z
		\end{pmatrix}Q^{*}$, since $\rho$ is an isometry, then $g(X,\xi)=0$ is equivalent to $g(X_{0},\xi_{0})=0$, and $\mathrm{Re }\ tr(X_{0} \xi_{0})=0$ if and only if $Y=0$.

So, \begin{equation}\label{normal vectors at orgin}
N_{P_{0}}G(n,m;\mathbb{F})=\Biggl\{\begin{pmatrix}
		X & 0\\0 & Z
		\end{pmatrix}|X\in H(n;\mathbb{F}), Z\in H(m-n;\mathbb{F})\Biggr\}.
\end{equation}

On the other hand, $\xi P=P\xi$ iff $\xi_{0}P_{0}=P_{0}\xi_{0}$, and it is equivalent to $Y=0$.

Hence,
\begin{equation}\notag
N_{P}G(n,m;\mathbb{F})=N_{P^{\perp}}G(m-n,m;\mathbb{F})=\{\xi \in H(m;\mathbb{F})|\xi P=P \xi \}.
\end{equation}

\begin{lemma}\label{matrix identity}
Let $X,Y\in T_{P}G(n,m;\mathbb{F})$ be two tangent vectors at $P$, and $Z\in H(m;\mathbb{F})$ is an arbitrary vector, then:
(1)~$P,I$ and $XY\in N_{P}G(n,m;\mathbb{F})$;
(2)~$PZ+ZP-2PZP\in T_{P}G(n,m;\mathbb{F})$;
(3)~$PXP=0$.
\end{lemma}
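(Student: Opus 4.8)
The statement to prove is Lemma~\ref{matrix identity}, a package of three algebraic identities about tangent and normal vectors of $G(n,m;\mathbb{F})$ sitting inside $H(m;\mathbb{F})$. My plan is to reduce everything to the two defining relations already established in the excerpt: a vector $X$ is tangent at $P$ iff $PX+XP=X$, and a vector $\xi$ is normal at $P$ iff $P\xi=\xi P$. The key auxiliary fact I will use repeatedly is that $P^2=P$, so $P$ acts like a (Hermitian) idempotent; combined with $PX+XP=X$ one gets, after multiplying by $P$ on the left, $PX+PXP=PX$, hence $PXP=0$, which is part (3). Multiplying $PX+XP=X$ by $P$ on the right gives the same thing, $PXP=0$, consistently.

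For part (1), that $P$ and $I$ are normal at $P$ is immediate since $P\cdot P=P\cdot P$ and $P\cdot I=I\cdot P$. For $XY$ with $X,Y$ tangent, I must check $P(XY)=(XY)P$. Using $PX=X-XP$ and $YP=Y-PY$ (both from the tangency relation), write $P(XY)=(PX)Y=(X-XP)Y=XY-X(PY)$, and since $PY=Y-YP$ this is $XY-X(Y-YP)=XYP$. Symmetrically $(XY)P=X(YP)=X(Y-PY)=XY-X(PY)$, and again $XY-X(Y-YP)=XYP$; wait, I should be careful — let me instead directly compare: from $P(XY)=XYP$ and $(XY)P = X(YP)=X(Y-PY)$, and separately $XYP$ — I need $XYP$ and $X(Y-PY)$ to agree, i.e. $XYP = XY - XPY$, i.e. $X(PY+YP)=XY$, which holds because $PY+YP=Y$. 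So $P(XY)=(XY)P=XYP$, giving $XY\in N_P$. One should also confirm $XY\in H(m;\mathbb{F})$ is not claimed — indeed $XY$ need not be Hermitian, but the normal space $N_PG=\{\xi\in H(m;\mathbb{F})\,|\,\xi P=P\xi\}$ was defined inside $H(m;\mathbb{F})$; I expect the intended reading is that $XY$ commutes with $P$, or one restricts to the symmetrized product; I will state the commutation identity $P(XY)=(XY)P$ as the substantive content and remark on the Hermitian symmetrization ($XY+YX$ is Hermitian and commutes with $P$) if needed for literal membership.

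For part (2), I must show $PZ+ZP-2PZP$ satisfies the tangency relation $P\cdot(\cdot)+(\cdot)\cdot P = (\cdot)$. Set $W=PZ+ZP-2PZP$. Then $PW = P^2Z+PZP-2P^2ZP = PZ+PZP-2PZP = PZ-PZP$, using $P^2=P$. Similarly $WP = PZP+ZP^2-2PZP^2 = PZP+ZP-2PZP = ZP-PZP$. Adding, $PW+WP = PZ+ZP-2PZP = W$. That is exactly the tangency condition, so $W\in T_PG(n,m;\mathbb{F})$; I should also note $W$ is Hermitian since $Z$ is (or, if $Z$ is arbitrary in $H(m;\mathbb{F})$ as stated, $W^*=Z^*P+PZ^*-2PZ^*P=ZP+PZ-2PZP=W$). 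There is no real obstacle here — the whole lemma is a short computation — the only point requiring a little care is the precise interpretation in part~(1) of what ``$XY\in N_PG$'' means when $XY$ fails to be Hermitian in general; I would resolve this exactly as the earlier text does, by working with the commuting-with-$P$ characterization, and if literal membership in $H(m;\mathbb{F})$ is wanted, replace $XY$ by its symmetrization, which is both Hermitian and commutes with $P$ by the same computation.
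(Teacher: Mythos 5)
Your proof is correct and follows essentially the same route as the paper's: all three parts are direct computations from $P^2=P$ together with the characterizations $PX+XP=X$ (tangent) and $\xi P=P\xi$ (normal), the only cosmetic differences being that the paper derives $P(XY)=(XY)P$ from the single identity $(PX+XP)Y=X(PY+YP)$ and obtains $PXP=0$ by right- rather than left-multiplication by $P$. Your side remark that $XY$ need not be Hermitian is a fair point the paper silently passes over; the commutation with $P$ is the substantive content, and it is the Hermitian symmetrization $XY+YX$ that is actually used later in the Gauss formula.
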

\begin{proof}
$P,I$ naturally commute with $P$; Since $PX+XP=X,PY+YP=Y$, then $(PX+XP)Y=X(PY+YP)$ shows that $XY$ commute with $P$; easy to calculate $P(PZ+ZP-2PZP)+(PZ+ZP-2PZP)P=PZ+ZP-2PZP$; multiply $P$ in the right of $PX+XP=X$ shows that $PXP=0$.
\end{proof}

Hence, we have the following Gauss formula for the embedding of $G(n,m;\mathbb{F})$ in $H(m;\mathbb{F})$ which is similar to the cases of projective spaces
\cite{chen1984total}:
\begin{prop}\label{Normal forms}
Let $X,Y$ be two tangent vector fields of $G(n,m;\mathbb{F})$, then:
\begin{equation}
\begin{cases}
h(X,Y)=(XY+YX)(I-2P); \\
\nabla_{X}Y=2(XY+YX)P+P(\tilde{\nabla}_{X}Y)+(\tilde{\nabla}_{X}Y)P,
\end{cases}
\end{equation}
where $\widetilde{\nabla},\nabla$ are the Riemannian connections of $H(m;\mathbb{F}),G(n,m;\mathbb{F})$, $h$ denote the second fundamental form of $G(n,m;\mathbb{F})$ in $H(m;\mathbb{F})$.
\end{prop}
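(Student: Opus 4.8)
The plan is to apply the Gauss formula for a submanifold of flat Euclidean space. Since the inner product $g$ makes $H(m;\mathbb{F})$ into a Euclidean vector space, its Levi-Civita connection $\widetilde{\nabla}$ is nothing but the componentwise directional derivative, and for tangent vector fields $X,Y$ of $G(n,m;\mathbb{F})$ one has the orthogonal splitting $\widetilde{\nabla}_X Y=\nabla_X Y+h(X,Y)$, where $\nabla_X Y$ is the projection of $\widetilde{\nabla}_X Y$ onto $T_PG(n,m;\mathbb{F})$ and $h(X,Y)$ the projection onto $N_PG(n,m;\mathbb{F})$. Thus it suffices to (i) extract an algebraic identity for $\widetilde{\nabla}_X Y$ from the equation cutting out the tangent space, and (ii) make the two orthogonal projections explicit; the proposition then follows by substitution.

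For (i), choose a curve $\alpha(t)$ in $G(n,m;\mathbb{F})$ with $\alpha(0)=P$ and $\alpha'(0)=X$, and restrict $Y$ to it. Since $Y$ is tangent, $\alpha(t)Y(\alpha(t))+Y(\alpha(t))\alpha(t)=Y(\alpha(t))$, and differentiating at $t=0$ — with $\widetilde{\nabla}_X Y=\tfrac{d}{dt}\big|_{0}Y(\alpha(t))=:Y'$ — gives $XY+YX+PY'+Y'P=Y'$. Multiplying on the left by $P$ and using $P^2=P$ yields $PY'P=-P(XY+YX)$; since $XY+YX\in N_PG(n,m;\mathbb{F})$ by Lemma \ref{matrix identity}(1) it commutes with $P$, so in fact $PY'P=-(XY+YX)P=-P(XY+YX)$.

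For (ii), recall from the preceding subsection that $T_PG(n,m;\mathbb{F})=\{Z\in H(m;\mathbb{F}):ZP+PZ=Z\}$ and $N_PG(n,m;\mathbb{F})=\{\xi\in H(m;\mathbb{F}):\xi P=P\xi\}$. For arbitrary $Z$, Lemma \ref{matrix identity}(2) gives $PZ+ZP-2PZP\in T_PG(n,m;\mathbb{F})$, while $W:=Z-(PZ+ZP-2PZP)$ satisfies $PW=WP=PZP$, hence $W\in N_PG(n,m;\mathbb{F})$; therefore the tangential and normal projections of $Z$ are $PZ+ZP-2PZP$ and $Z-PZ-ZP+2PZP$ respectively. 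Applying these to $Y'$ and inserting the identities from (i) — namely $Y'-PY'-Y'P=XY+YX$ and $PY'P=-(XY+YX)P$ — gives $h(X,Y)=(XY+YX)-2(XY+YX)P=(XY+YX)(I-2P)$ and $\nabla_X Y=PY'+Y'P-2PY'P=2(XY+YX)P+P\widetilde{\nabla}_X Y+\widetilde{\nabla}_X Y\,P$, which are the two asserted formulas.

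I do not expect a genuine obstacle: the whole argument is a matrix manipulation powered by $P^2=P$ and by the fact (Lemma \ref{matrix identity}(1)) that $XY+YX$ commutes with $P$. The only place requiring attention is the noncommutativity of the entries when $\mathbb{F}=\mathbb{H}$; however, every step uses only associativity of matrix multiplication together with Lemma \ref{matrix identity}, so the quaternionic case is handled verbatim, and one should merely re-verify the projection formulas in (ii) with this noncommutativity in mind.
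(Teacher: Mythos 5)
Your proposal is correct and follows essentially the same route as the paper: differentiate the tangency relation $PY+YP=Y$ along a curve to get $\widetilde{\nabla}_XY=XY+YX+P\widetilde{\nabla}_XY+\widetilde{\nabla}_XY\,P$, derive $P(\widetilde{\nabla}_XY)P=-(XY+YX)P$ via Lemma \ref{matrix identity}, and split off the tangential part $PZ+ZP-2PZP$ against its normal complement. The only cosmetic difference is that you write down the orthogonal projection operators explicitly, whereas the paper simply names the two summands and verifies each lies in the correct subspace; the content is identical.
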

\begin{proof}
Let $P(t)$ be a curve along $G(n,m;\mathbb{F})$ in $H(m;\mathbb{F})$, consider $X=X_{P}\in T_{P}G(n,m;\mathbb{F})$, $P(t)$ satisfy: $P(0)=P,P'(0)=X$, also denote $Y=Y(t)$ be the restriction of the tangent vector field $Y(t)$ on $P(t)$, then
\begin{equation}\notag
Y(t)=P(t)Y(t)+Y(t)P(t)
\end{equation},
from which we find
\begin{equation}\notag
\widetilde{\nabla}_{X}Y=P(\widetilde{\nabla}_{X}Y)+(\widetilde{\nabla}_{X}Y)P+XY+YX,
\end{equation}
denote $f=(XY+YX)(I-2P),g=2(XY+YX)P+P(\widetilde{\nabla}_{X}Y)+(\widetilde{\nabla}_{X}Y)P$, then
\begin{equation}\notag
\widetilde{\nabla}_{X}Y=f+g.
\end{equation}

(3) of Lemma \ref{matrix identity} tells us: $P(t)Y(t)P(t)=0$, then we have $XYP+PYX+P(\widetilde{\nabla}_{X}Y)P=0$,
since $YXP=PYX$, then
\begin{equation}\notag
(XY+YX)P=-P(\widetilde{\nabla}_{X}Y)P,
\end{equation}

From (2) of Lemma \ref{matrix identity}, we find $g\in T_{P}G(n,m;\mathbb{F})$ and easy to check $f\in N_{P}G(n,m;\mathbb{F})$.
\end{proof}
\subsection{Relations between minimal submanifold in sphere and its cone}
Let $M$ be a $(p-1)$-dimensional, compact submanifold which is immersed into the unit sphere of $\mathbb{R}^{n}$, the immersion map is denoted by $X$, then the cone $C(M)$ is described by:
\begin{equation}\notag
\begin{aligned}
Y:M\times (0,\infty)&\rightarrow \mathbb{R}^{n} \\
(m,t) &\mapsto tm,
\end{aligned}
\end{equation}
i.e. $Y(m,t)=tX(m)$.

The truncated cone $C(M)_{\epsilon}:=C(M)|_{M\times [\epsilon,1]}(\epsilon >0)$.

$M$ and $C(M)$ are all immersed submanifolds in $\mathbb{R}^{n}$ with the canonical Euclidean metric, so we consider the following vectors being of the standard Euclidean lengths.

Locally, given the orthonormal tangent vector field $\{e_{i}\}(i=1,\ldots,p-1)$ and orthonormal  normal vector field $\{e_{\alpha}\}(\alpha=p, \ldots, n-1)$ of $M$ in $S^{n-1}(1)$, then $e_{i}(m,t):=e_{i}(m)$ and $e_{\alpha}(m,t):=e_{\alpha}(m)$ are orthonormal tangent vectors and orthonormal normal vectors of $C(M)$ at $(m,t)$, $m\in X(m)$.

We use the following notations for $M$: the dual frames of $M$ are $\omega^{i}$, the connection forms of tangent bundle are $\omega_{i}^{j}$, the connection forms of normal bundle are $\omega_{i}^{\alpha}$, the coefficients of second fundamental forms are $h^{\alpha}_{ij}$.

Then, restrict $\omega^{\alpha}$ on submanifold $M$:
\begin{equation}\notag
\omega^{\alpha}=0\Rightarrow \omega_{i}^{\alpha}=h^{\alpha}_{ij}\omega^{j},h^{\alpha}_{ij}=h^{\alpha}_{ji},
\end{equation}
$M$ is minimal if and only if $\sum_{i}h^{\alpha}_{ii}=0$, for every $\alpha=p,\ldots,n-1$.

For the cone $C(M)$, set $e_{0}(m)=X(m)$, from
\begin{equation}\notag
de_{0}=dX=\omega^{i}e_{i},
\end{equation}
we have $\omega_{0}^{i}=\omega^{i},\omega_{0}^{\alpha}=0$.

The tangent frames of $C(M)$ in $\mathbb{R}^{n}$ are given by $\{e_{0},e_{i}\}(i=1,\ldots,p-1)$, the normal frames of $C(M)$ in $\mathbb{R}^{n}$ are given by $\{e_{\alpha} \}(\alpha=p,\ldots,n-1)$. Let $\{\theta^{0},\theta^{i}\}$ be the dual frames, then from
\begin{equation}\notag
dY=dtX+tdY=dte_{0}+t\omega^{i}e_{i},
\end{equation}
we have
\begin{equation}\notag
\theta^{0}=dt,\theta^{i}=t\omega^{i}.
\end{equation}

The connection forms of tangent bundle of $C(M)$ are given by
\begin{equation}\notag
\begin{aligned}
\theta_{0}^{i}&=\langle de_{0},e_{i}\rangle=\omega^{i},\\
\theta_{j}^{i}&=\langle de_{j},e_{i}\rangle=\omega_{j}^{i}.
\end{aligned}
\end{equation}

And \begin{equation}\notag
\begin{aligned}
de_{\alpha}&=\theta_{\alpha}^{0}e_{0}+\theta_{\alpha}^{i}e_{i}+\theta_{\alpha}^{\beta}e_{\beta}\\
&=\omega_{\alpha}^{i}e_{i}+\omega_{\alpha}^{\beta}e_{\beta},
\end{aligned}
\end{equation}
so $\theta_{\alpha}^{0}=\theta_{0}^{\alpha}=0$, $\theta_{\alpha}^{i}=\omega_{\alpha}^{i},\theta_{\alpha}^{\beta}=\omega_{\alpha}^{\beta}$.

Finally, let $\tilde{h}_{sl}^{\alpha}(s,l=0,1,\ldots,p-1)$ be the coefficients of second fundamental forms of $C(M)$, from
\begin{equation}\notag
\theta_{i}^{\alpha}=\omega_{i}^{\alpha}=\sum_{j=1}^{p-1}h_{ij}^{\alpha}\omega^{j}
=\sum_{l=0}^{p-1}\tilde{h}_{il}^{\alpha}\theta^{l},
\end{equation}
we conclude that
\begin{equation}\label{mim submfd and its cone}
\tilde{h}_{i0}^{\alpha}=0,\tilde{h}_{00}^{\alpha}=0,\tilde{h}_{ij}^{\alpha}(tm)=\frac{1}{t}h_{ij}^{\alpha}(m),
\end{equation}
an intuitive explanation is that at infinity, the cone behave more flat.

Let the second fundamental form of $M$ in $S^{n-1}(1)$(respectively, $C(M)$ in $\mathbb{R}^{n}$) along the normal direction $e_{\alpha}$ by $B^{\alpha}$(respectively, $\tilde{B}^{\alpha}$), then
\begin{equation}\notag
tr\tilde{B}^{\alpha}=\tilde{h}_{00}^{\alpha}+\sum_{i=1}^{p-1}\tilde{h}_{ii}^{\alpha}=\frac{1}{t} \sum_{i=1}^{p-1}h_{ii}^{\alpha}=\frac{1}{t}tr B^{\alpha}.
\end{equation}

Hence,
\begin{prop}
$M$ is minimal in $S^{n-1}(1)$ if and only if its cone $C(M)$ is minimal in $\mathbb{R}^{n}$.
\end{prop}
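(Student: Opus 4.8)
The plan is to read off the equivalence directly from the trace relation for the second fundamental forms that has just been derived, since essentially all of the analytic content is already encoded in the moving-frame computation above. There is no differential equation to solve and no estimate to make; the proposition is a bookkeeping consequence of \eqref{mim submfd and its cone}.

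First I would recall the definition of minimality: an immersed submanifold is minimal exactly when its mean curvature vector vanishes identically, which in the chosen orthonormal normal frame $\{e_{\alpha}\}$ amounts to the vanishing of every trace $\mathrm{tr}\,B^{\alpha}=\sum_{i=1}^{p-1}h_{ii}^{\alpha}$. Thus $M$ is minimal in $S^{n-1}(1)$ if and only if $\mathrm{tr}\,B^{\alpha}=0$ for all $\alpha=p,\ldots,n-1$, and likewise $C(M)$ is minimal in $\mathbb{R}^{n}$ if and only if $\mathrm{tr}\,\tilde{B}^{\alpha}=0$ for all such $\alpha$.

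The key observation linking the two conditions is that the cone and its base share the same orthonormal normal frame $\{e_{\alpha}\}_{\alpha=p}^{n-1}$, while the extra direction $e_{0}=X$, which is normal to $M$ in $\mathbb{R}^{n}$, becomes tangent to $C(M)$; this is precisely why the cone's mean curvature along $e_{\alpha}$ receives a contribution only from the base. Indeed, the relations $\tilde{h}_{00}^{\alpha}=0$ and $\tilde{h}_{i0}^{\alpha}=0$ from \eqref{mim submfd and its cone} show that the trace over the full tangent frame $\{e_{0},e_{i}\}$ of the cone collapses to the trace over $\{e_{i}\}$ alone, yielding the identity $\mathrm{tr}\,\tilde{B}^{\alpha}(tm)=\tfrac{1}{t}\,\mathrm{tr}\,B^{\alpha}(m)$ already established at the end of the subsection.

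Finally, since $t>0$ everywhere on the cone $M\times(0,\infty)$, the scalar factor $\tfrac{1}{t}$ never vanishes, so $\mathrm{tr}\,\tilde{B}^{\alpha}(tm)=0$ for all $(m,t)$ if and only if $\mathrm{tr}\,B^{\alpha}(m)=0$ for all $m$, and this holds for each index $\alpha$ separately. Combining this with the minimality criteria of the first step gives the claimed equivalence in both directions at once. I do not expect any genuine obstacle; the only point that deserves a word of care is confirming that the range of $\alpha$ is identical for $M\subset S^{n-1}(1)$ and for $C(M)\subset\mathbb{R}^{n}$ — both normal bundles have rank $n-p$ and are spanned by the same $\{e_{\alpha}\}$ — so that the fiberwise vanishing conditions correspond exactly and no normal direction is overlooked.
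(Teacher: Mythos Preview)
Your proposal is correct and follows exactly the same route as the paper: it uses the relations \eqref{mim submfd and its cone} to obtain $\mathrm{tr}\,\tilde{B}^{\alpha}=\tfrac{1}{t}\,\mathrm{tr}\,B^{\alpha}$ and then reads off the equivalence from the nonvanishing of $1/t$. The paper's own argument is just this computation, stated in a single displayed line.
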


Let the set of eigenvalues of $B^{\alpha}$(or the shape operator $A_{\alpha}$) at $m$ be $\{\lambda_{1},\ldots,\lambda_{p-1}\}$, $\tilde{B}^{\alpha}$ at $tm$ has eigenvalues $\{0,\frac{1}{t}\lambda_{1},\ldots,\frac{1}{t}\lambda_{p-1}\}$, then

\begin{prop}(\cite{harvey1982calibrated})
$M$ is an austere submanifold of $S^{n-1}(1)$ if and only if its cone is an austere submanifold of $\mathbb{R}^{n}$.
\end{prop}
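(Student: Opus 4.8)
The plan is to deduce the statement directly from the second-fundamental-form relations \eqref{mim submfd and its cone} already established, reducing austereness to an elementary observation about multisets of real numbers being invariant under negation. Recall that, by definition, a submanifold is austere if and only if for every point and every normal vector $\nu$ the eigenvalues of the shape operator $A_\nu$ (equivalently, of the second fundamental form $B^\nu$ in that normal direction) form a multiset invariant under multiplication by $-1$; in particular the condition must be verified along \emph{all} normal directions, not merely along the frame $\{e_\alpha\}$.

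First I would fix an arbitrary normal vector. Since the normal frame $\{e_\alpha\}$ of $C(M)$ at $(m,t)$ is precisely the radially constant extension of the normal frame of $M$ at $m$, any normal vector at $tm$ has the form $\nu=\sum_\alpha c_\alpha e_\alpha$ and corresponds to the normal vector $\sum_\alpha c_\alpha e_\alpha$ of $M$ at $m$. Using the linearity of the second fundamental form in the normal direction together with \eqref{mim submfd and its cone}, I would record that, in the tangent frame $\{e_0,e_1,\ldots,e_{p-1}\}$ of the cone, the matrix of $\tilde B^\nu$ at $tm$ is block diagonal,
\begin{equation}\notag
\tilde B^\nu(tm)=\begin{pmatrix} 0 & 0 \\ 0 & \tfrac{1}{t}\,B^\nu(m)\end{pmatrix},
\end{equation}
because $\tilde h_{00}^\alpha=\tilde h_{i0}^\alpha=0$ and $\tilde h_{ij}^\alpha(tm)=\tfrac{1}{t}h_{ij}^\alpha(m)$ for every $\alpha$. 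Consequently the spectrum of $\tilde B^\nu(tm)$ equals $\{0\}\cup\tfrac{1}{t}\,\mathrm{spec}\,B^\nu(m)$, which is exactly the eigenvalue relation noted just before the statement.

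The core of the argument is then the elementary fact that a finite multiset $S\subset\mathbb{R}$ is invariant under $x\mapsto -x$ if and only if $\{0\}\cup c\,S$ is, for any fixed constant $c>0$: adjoining $0$ changes nothing since $-0=0$, and multiplying every element by the positive factor $c=\tfrac{1}{t}$ commutes with negation, hence both preserves and reflects the symmetry. Applying this with $S=\mathrm{spec}\,B^\nu(m)$ shows that $\tilde B^\nu(tm)$ has sign-symmetric spectrum precisely when $B^\nu(m)$ does. Quantifying over all normal directions $\nu$, all $m\in M$, and all $t>0$ then yields the desired equivalence: $C(M)$ is austere in $\mathbb{R}^n$ if and only if $M$ is austere in $S^{n-1}(1)$.

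There is essentially no hard step, since the entire content is packaged in the description of the shape operators of the cone in terms of those of $M$ that \eqref{mim submfd and its cone} already supplies. The only point requiring care is the reduction to an arbitrary normal direction $\nu$ rather than a single frame vector $e_\alpha$---austereness being a condition on all normal directions---but this is handled immediately by the linearity of $B^\nu$ in $\nu$ and the fact that the normal bundle of the cone is the radial pullback of that of $M$.
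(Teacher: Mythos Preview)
Your proposal is correct and follows essentially the same route the paper intends: the paper does not actually write out a proof for this proposition---it simply records the eigenvalue relation $\mathrm{spec}\,\tilde B^{\alpha}(tm)=\{0\}\cup\tfrac{1}{t}\,\mathrm{spec}\,B^{\alpha}(m)$ immediately before the statement and then cites \cite{harvey1982calibrated}, leaving the conclusion as an evident consequence. Your write-up makes that consequence explicit, and your added remark that one must pass from frame directions $e_\alpha$ to an arbitrary normal $\nu=\sum_\alpha c_\alpha e_\alpha$ by linearity is a genuine (if minor) refinement over what the paper states.
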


\section{Cones over the general Grassmannians}
We will adopt the following ranges of indices in this section unless otherwise stated:
\begin{center}
$1\leq a,b,c,d \leq n$;

$n+1\leq \alpha,\beta,\lambda,\mu \leq m$;

$u,v,w,z \in \{1, i, j, k\}$,
\end{center}
where $i,j,k$ denote the imaginary units of $\mathbb{F}=\mathbb{H}$, $i$ denote the imaginary unit of $\mathbb{F}=\mathbb{C}$.
\subsection{Minimal embedding of Grassmannian}
Let $e_{a}$ be the column vectors with $1$ in the $a^{th}$ slot, denote $E_{ab}=e_{a}e_{b}^{T}$, a simple chain rule is $E_{ab}E_{cd}=e_{a}e_{b}^{T}e_{c}e_{d}^{T}=\delta_{bc}E_{ad}$. Now let $F_{\alpha a}^{u}$ denote the matrix $uE_{\alpha a}+\bar {u} E_{a\alpha}$, where $\bar {u}$ denote the conjugation on $\mathbb{F}$. Then it is easy to verify that $F_{\alpha a}^{u}$ is an orthonormal basis of $T_{P_{0}}G(n,m;\mathbb{F})$.

Now $F_{\alpha a}^{u}F_{\beta b}^{v}+F_{\beta b}^{v}F_{\alpha a}^{u}=\delta_{\alpha \beta}(\bar u vE_{ab}+\bar v uE_{ba})+\delta_{a b}(u \bar vE_{\alpha \beta}+v \bar u E_{\beta \alpha})$, then the second fundamental forms are given by:
\begin{equation}\notag
h(F_{\alpha a}^{u},F_{\beta b}^{v})=-\delta_{\alpha \beta}(\bar u vE_{ab}+\bar v uE_{ba})+\delta_{a b}(u \bar vE_{\alpha \beta}+v \bar u E_{\beta \alpha}).
\end{equation}

So, $h(F_{\alpha a}^{u},F_{\alpha a}^{u})=2(E_{\alpha \alpha}-E_{a a})$, the mean curvature vector field at $P_{0}$ is:
\begin{equation}\notag
\mathbf{H}_{P_{0}}=\frac{2}{dn(m-n)}
\begin{pmatrix}
-(m-n)I & 0\\ 0 & nI
\end{pmatrix}=-\frac{2m}{dn(m-n)}\left(P_{0}-\frac{n}{m}I\right).
\end{equation}

Since $G(n,m;\mathbb{F})$ are orbits of isometry group actions and $g(P-\frac{n}{m}I,P-\frac{n}{m}I)=\frac{n(m-n)}{2m}$. Then, after minus the center $\frac{n}{m}I$, we see that $P-\frac{n}{m}I$ and $P^{\perp}-\frac{m-n}{m}I=-(P-\frac{n}{m}I)$ are two minimal embeddings of $G(n,m;\mathbb{F})$ and $G(m-n,m;\mathbb{F})$ in the same one hypersphere contained in $H(m;\mathbb{F})$, their images and the associated cones are opposite.

We can reduce the codimension since the images of $G(n,m;\mathbb{F})$ and $G(m-n,m;\mathbb{F})$ in $H(m;\mathbb{F})$ are all located in the affine hyperplane $\{P\in H(m;\mathbb{F}) |trP=0\}$, hence:

\begin{theorem}\label{minimal models}

a). The Grassmannian $G(n,m;\mathbb{F})(\mathbb{F}=\mathbb{R},\mathbb{C},\mathbb{H})$ and $G(m-n,m;\mathbb{F})$ can be embedded into $S^{N-2}(r)$ as an pair of minimal submanifolds simultaneously, where $r=\sqrt{\frac{n(m-n)}{2m}}$, $N=m+dm(m-1)/2$, moreover, their images are opposite;

b). There exists two opposite cones associated with $G(n,m;\mathbb{F})$ in $(N-1)$ dimensional Euclidean space, one is the cone over the embedding: $P-\frac{n}{m}I(P\in G(n,m;\mathbb{F}))$, the other one is the cone over the inverse embedding: $P^{\perp}-\frac{m-n}{m}I$.
\end{theorem}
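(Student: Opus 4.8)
The plan is to formalize the computation just performed: the image $\varphi(G(n,m;\mathbb{F}))$ already lies on a round sphere, and it already has radial mean curvature there, so minimality in that sphere is automatic and the rest is bookkeeping. First I would record that the image lies on a sphere. For $P=P_{L}$ one has $P^{2}=P$ and $\mathrm{tr}_{\mathbb{F}}P=n$, hence $g(P,P)=\tfrac12\,\mathrm{Re}\,\mathrm{tr}_{\mathbb{F}}(P^{2})=\tfrac n2$ and $g(P,I)=\tfrac12\,\mathrm{Re}\,\mathrm{tr}_{\mathbb{F}}(P)=\tfrac n2$ are both constant, so $g\bigl(P-\tfrac nm I,\,P-\tfrac nm I\bigr)=\tfrac n2-\tfrac{n^{2}}{2m}=\tfrac{n(m-n)}{2m}=r^{2}$ is constant while $\mathrm{tr}_{\mathbb{F}}\bigl(P-\tfrac nm I\bigr)=0$. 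Since $\varphi$ is an embedding (the identification $G(n,m;\mathbb{F})\cong\{P\in H(m;\mathbb{F}):P^{2}=P,\ \mathrm{tr}_{\mathbb{F}}P=n\}$ obtained above), the translated image $\varphi(G(n,m;\mathbb{F}))-\tfrac nm I$ is a compact submanifold of the codimension-one linear subspace $H_{0}:=\{X\in H(m;\mathbb{F}):\mathrm{tr}_{\mathbb{F}}X=0\}\cong E^{N-1}$, lying on the sphere $S^{N-2}(r)\subset H_{0}$, with $N=\dim_{\mathbb{R}}H(m;\mathbb{F})=m+dm(m-1)/2$; and because $H_{0}$ is a totally geodesic flat subspace to which this submanifold is everywhere tangent, its second fundamental form and mean curvature computed in $E^{N-1}$ agree with those computed in $H(m;\mathbb{F})$.

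Next I would establish minimality in $S^{N-2}(r)$. The mean curvature at the base point was just found to be $\mathbf{H}_{P_{0}}=-\tfrac{2m}{dn(m-n)}\bigl(P_{0}-\tfrac nm I\bigr)$, a negative multiple of the position vector of $P_{0}$ relative to the centre $\tfrac nm I$. As $G(n,m;\mathbb{F})$ is the $\rho$-orbit of $P_{0}$, each $\rho(Q)$ ($Q\in U(m;\mathbb{F})$) is a linear isometry of $(H(m;\mathbb{F}),g)$ preserving this orbit and fixing $I$ (since $QIQ^{*}=I$), hence fixing $\tfrac nm I$; equivariance of mean curvature under isometries then gives $\mathbf{H}_{P}=-\tfrac{2m}{dn(m-n)}\bigl(P-\tfrac nm I\bigr)$ at every $P$, i.e. $\mathbf{H}$ is everywhere normal to $S^{N-2}(r)$. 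For a compact submanifold $M$ of a round sphere the Gauss equation reads $\mathbf{H}^{E^{N-1}}_{M}=\mathbf{H}^{S^{N-2}(r)}_{M}-\tfrac{\dim M}{r^{2}}\,(\text{position})$; here the left-hand side and the last term are normal to the sphere while $\mathbf{H}^{S^{N-2}(r)}_{M}$ is tangent to it, so $\mathbf{H}^{S^{N-2}(r)}_{M}=0$ and $\varphi(G(n,m;\mathbb{F}))-\tfrac nm I$ is minimal in $S^{N-2}(r)$. For $G(m-n,m;\mathbb{F})$ one uses $\varphi(L^{\perp})=P_{L^{\perp}}=I-P_{L}$, so that $\varphi(G(m-n,m;\mathbb{F}))-\tfrac{m-n}{m}I=\{-(P-\tfrac nm I):P\in\varphi(G(n,m;\mathbb{F}))\}$ is the antipodal image inside the same sphere $S^{N-2}(r)\subset H_{0}$ (the radius $r=\sqrt{n(m-n)/(2m)}$ being symmetric in $n\leftrightarrow m-n$); since the antipodal map is an isometry of the sphere, this is again minimal and is opposite to the first image. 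This gives part a).

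Part b) is then immediate from the equivalence between minimality of a submanifold of a sphere and minimality of the cone over it, established in Subsection 2.3 (after rescaling to the unit sphere; see in particular \eqref{mim submfd and its cone}): the cone in $H_{0}\cong E^{N-1}$ over the minimal submanifold $\varphi(G(n,m;\mathbb{F}))-\tfrac nm I\subset S^{N-2}(r)$ is a minimal cone, as is the cone over the antipodal submanifold $\varphi(G(m-n,m;\mathbb{F}))-\tfrac{m-n}{m}I$, and the two cones are opposite because the two submanifolds are. I do not expect any genuine obstacle: essentially all the content sits in the already-completed local computation of the second fundamental form (through the Gauss formula of Proposition \ref{Normal forms} and the explicit orthonormal basis $F_{\alpha a}^{u}$ of $T_{P_{0}}G(n,m;\mathbb{F})$) together with the homogeneity of the orbit. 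The only point that needs care is that the centre $\tfrac nm I$ of the sphere is fixed by the whole isometry group $\rho(U(m;\mathbb{F}))$ --- this is precisely what promotes the radiality of $\mathbf{H}$ at the single point $P_{0}$ to radiality everywhere, and hence minimality in the sphere rather than a mere critical value of the mean curvature at one point.
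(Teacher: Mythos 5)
Your proposal is correct and follows essentially the same route as the paper: compute $\mathbf{H}_{P_0}$ from the second fundamental form, observe it is a negative multiple of $P_0-\frac{n}{m}I$, use the fact that the adjoint action is an isometry fixing the centre $\frac{n}{m}I$ to propagate radiality of $\mathbf{H}$ over the whole orbit, identify the image of $G(m-n,m;\mathbb{F})$ as the antipodal set via $P_{L^\perp}=I-P_L$, and reduce codimension to the trace-zero hyperplane. The only difference is that you spell out the sphere membership and the Gauss-equation decomposition explicitly, which the paper leaves implicit.
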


\subsection{Area-minimizing cones over  general Grassmannians}
The cone and its opposite cone share the same tangent vectors, normal vectors, second fundamental forms and normal radius at the antipodal points, and since the second fundamental forms of projective spaces behave a little difference to those of general Grassmannians, so in this subchapter, we consider cone $C$ over the general Grassmannian manifold $G(n,m;\mathbb{F})(m\geq2n\geq 4)$, i.e. $C$ is not the cones over projective spaces or its opposite cones.

Still denote $w,z\in \{1,i,j,k\}$, note the second fundamental form $h$ is equivariant under $\rho$, we can choose the associated equivariant normal vector fields to reduce the computation to the origin $P_{0}$.

According to \eqref{normal vectors at orgin}, an normal basis of $C$ at $P_{0}$ can be given by: $\xi_{0}=\sqrt{\frac{2}{m}}I_{m}$, $H_{l}=e_{1}e_{1}^{T}-e_{l}e_{l}^{T}(2\leq l\leq n)$, $H_{\gamma}=e_{n+1}e_{n+1}^{T}-e_{\gamma}e_{\gamma}^{T}(n+2\leq \gamma \leq m)$, $H_{cd}^{w}=we_{c}e_{d}^{T}+\bar w e_{d}e_{c}^{T}(1\leq c<d\leq n)$, $H_{\lambda \mu}^{z}=z e_{\lambda}e_{\mu}^{T}+\bar z e_{\mu}e_{\lambda}^{T}(n+1\leq \lambda<\mu\leq m)$, where $T$ denote the transpose. All normals in the above have length $1$ and orthogonal to each other except any two in $H_{l}$ or any two in $H_{\gamma}$. We note here $g(H_{l},H_{b})=\frac{1}{2}$, and $g(H_{\gamma},H_{\alpha})=\frac{1}{2}$, if there exits $2\leq l< b\leq n$ or $n+2\leq \gamma <\alpha \leq m$.

We continue to compute:

$g(h(F_{\alpha a}^{u},F_{\beta b}^{v}),\xi_{0})=0$;

$g(h(F_{\alpha a}^{u},F_{\beta b}^{v}),H_{l})=-\delta_{\alpha \beta}(\delta_{a1}\delta_{b1}-\delta_{al}\delta_{bl})\delta_{uv}$, where in the computation, the term $\mathrm{Re}(\bar u v+\bar v u)/2=\mathrm{Re}(\bar u v)=\mathrm{Re} u \ \mathrm{Re} v+\langle \mathrm{Im} \ u,\mathrm{Im} \ v\rangle_{R} =\delta_{uv}$;

$g(h(F_{\alpha a}^{u},F_{\beta b}^{v}),H_{\gamma})=\delta_{ab}(\delta_{\alpha n+1}\delta_{\beta n+1}-\delta_{\alpha \gamma}\delta_{\beta \gamma})\delta_{uv}$;

$g(h(F_{\alpha a}^{u},F_{\beta b}^{v}),H_{cd}^{w})=-\delta_{\alpha \beta}[\mathrm{Re}(\bar u v w)\delta_{ad}\delta_{bc}+\mathrm{Re}(\bar u v \bar w)\delta_{ac}\delta_{bd}]$;

$g(h(F_{\alpha a}^{u},F_{\beta b}^{v}),H_{\lambda \mu}^{z})=\delta_{ab}[\mathrm{Re}(u \bar v z)\delta_{\alpha \mu}\delta_{\beta \lambda}+\mathrm{Re}(u \bar v \bar z)\delta_{\alpha \lambda}\delta_{\beta \mu}]$.

Choose a unit normal vector at $P_{0}$,
\begin{equation}\notag
\xi=s\xi_{0}+\sum_{l=2}^{n}f_{l}H_{l}+\sum_{\gamma=n+2}^{m}g_{\gamma}H_{\gamma}+\sum_{w,c<d}p_{cd}^{w}H_{cd}^{w}+\sum_{z,\lambda<\mu}q_{\lambda \mu}^{z}H_{\lambda \mu}^{z},
\end{equation}

then
\begin{multline}\label{unit normal}
s^{2}+\frac{1}{2}\sum_{l=2}^{n}f_{l}^{2}+\frac{1}{2}\left(\sum_{l=2}^{n}f_{l}\right)^{2} \\
+ \frac{1}{2}\sum_{\gamma=n+2}^{m}g_{\gamma}^{2} +\frac{1}{2}\left(\sum_{\gamma=n+2}^{m}g_{\gamma}\right)^{2}
+\sum_{w,c<d}(p_{cd}^{w})^{2}+\sum_{z,\lambda<\mu}(q_{\lambda \mu}^{z})^{2}=1.
\end{multline}

We denote $h_{u\alpha a,v\beta b}^{\xi}=g(h(F_{\alpha a}^{u},F_{\beta b}^{v}),\xi)$, and $||h^{\xi}||^{2}=\sum_{u\alpha a,v\beta b}(h_{u\alpha a,v\beta b}^{\xi})^{2}$, then
\begin{equation}\notag
\begin{aligned}
h_{u\alpha a,v\beta b}^{\xi}&=
-\delta_{\alpha \beta}(\delta_{a1}\delta_{b1}-\delta_{al}\delta_{bl})\delta_{uv}f_{l}
+\delta_{ab}(\delta_{\alpha n+1}\delta_{\beta n+1}-\delta_{\alpha \gamma}\delta_{\beta \gamma})\delta_{uv}g_{\gamma} \\
&-\delta_{\alpha \beta}\Big[\mathrm{Re}(\bar u v w)\delta_{ad}\delta_{bc}+\mathrm{Re}(\bar u v \bar w)\delta_{ac}\delta_{bd}\Big]p_{cd}^{w} \\
&+\delta_{ab}\Big[\mathrm{Re}(u \bar v z)\delta_{\alpha \mu}\delta_{\beta \lambda}+\mathrm{Re}(u \bar v \bar z)\delta_{\alpha \lambda}\delta_{\beta \mu}\Big]q_{\lambda \mu}^{z}.
\end{aligned}
\end{equation}

Note $c<d,\lambda<\mu$, all the nonzero terms are divided into eight types:

(1)~$\alpha=\beta,a>b$, the results are: $-\mathrm{Re}(\bar u v w)p_{ba}^{w}$;

(2)~$\alpha=\beta,a<b$, the results are: $-\mathrm{Re}(\bar u v \bar w)p_{ab}^{w}$;

(3)~$\alpha<\beta,a=b$, the results are: $\mathrm{Re}(u \bar v \bar z)q_{\alpha \beta}^{z}$;

(4)~$\alpha>\beta,a=b$, the results are: $\mathrm{Re}(u \bar v z)q_{\beta \alpha}^{z}$;

(5)~$\alpha=\beta=n+1,a=b\geq2$, the results are: $\delta_{uv}(f_{a}+\sum_{\gamma}g_{\gamma})$;

(6)~$\alpha=\beta\geq n+2,a=b=1$, the results are: $\delta_{uv}(-\sum_{l}f_{l}-g_{\alpha})$;

(7)~$\alpha=\beta\geq n+2,a=b\geq2$, the results are: $\delta_{uv}(f_{a}-g_{\alpha})$;

(8)~$\alpha=\beta=n+1,a=b=1$, the results are: $\delta_{uv}(-\sum_{l}f_{l}+\sum_{\gamma}g_{\gamma})$.

\begin{prop}
\begin{equation}\notag
\begin{aligned}
||h^{\xi}||^{2}&=d\Bigg[(m-n)\sum_{l=2}^{n}f_{l}^{2}+n\sum_{\gamma=n+2}^{m}g_{\gamma}^{2}
+(m-n)(\sum_{l=2}^{n}f_{l})^{2}+n(\sum_{\gamma=n+2}^{m}g_{\gamma})^{2}\\
&+2\sum_{w,a<b}(p_{ab}^{w})^{2}
+2\sum_{z,\alpha<\beta}(q_{\alpha  \beta}^{z})^{2}\Bigg].
\end{aligned}
\end{equation}
\end{prop}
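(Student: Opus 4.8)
The asserted identity is a direct computation that I would carry out at the origin $P_0$ and then transport by homogeneity. Since $\rho$ acts transitively and isometrically and the second fundamental form $h$ is $\rho$-equivariant, the normal frame may be chosen $\rho$-equivariantly, so $\|h^\xi\|^2$ is the same function of the coordinates of $\xi$ at every point of $G(n,m;\mathbb{F})$; it therefore suffices to evaluate $\|h^\xi\|^2=\sum_{u\alpha a,\,v\beta b}\big(h^\xi_{u\alpha a,v\beta b}\big)^2$ at $P_0$ with the orthonormal tangent basis $\{F^u_{\alpha a}\}$ and the normal basis $\xi_0,H_l,H_\gamma,H^w_{cd},H^z_{\lambda\mu}$ fixed above. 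As $g(h(F^u_{\alpha a},F^v_{\beta b}),\xi_0)=0$ identically, the coefficient $s$ drops out and only $f_l,g_\gamma,p^w_{cd},q^z_{\lambda\mu}$ survive in the answer.

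The plan is then to feed the eight-type decomposition of the nonzero entries $h^\xi_{u\alpha a,v\beta b}$ given above into the sum of squares, treating the \emph{off-diagonal} families (types (1)--(4), which carry the $p$'s and $q$'s) separately from the \emph{diagonal} families (types (5)--(8), which carry the $\delta_{uv}$'s and the $f$'s and $g$'s). For types (1)--(4) the inner sums over the imaginary-unit indices collapse by the elementary identity that, for an orthonormal real basis $\{u\}$ of $\mathbb{F}$ and arbitrary $a,b\in\mathbb{F}$, $\sum_u\mathrm{Re}(\bar u a)\,\mathrm{Re}(\bar u b)=\mathrm{Re}(\bar a b)$, combined with $\bar v v=1$ and $\mathrm{Re}(\bar w w')=\delta_{ww'}$ for basis elements: after summing over $u$ and $v$, each squared type-(1) or type-(2) entry reduces to a multiple of $\sum_w(p^w_{cd})^2$ and each type-(3) or type-(4) entry to a multiple of $\sum_z(q^z_{\lambda\mu})^2$, and the one remaining free slot ($\alpha$ in (1),(2); $a$ in (3),(4)) only supplies a multiplicity. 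Note that the two branches $\mathrm{Re}(\bar u v w)$ and $\mathrm{Re}(\bar u v\bar w)$ of the $p$-entries both collapse to the same $\delta_{ww'}$, so types (1) and (2) add coherently, and similarly (3) and (4).

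For the diagonal families I would substitute the displayed values $f_a+\sum_\gamma g_\gamma$, $-\sum_l f_l-g_\alpha$, $f_a-g_\alpha$ and $-\sum_l f_l+\sum_\gamma g_\gamma$ of types (5)--(8), multiply by the factor coming from $u=v$ and by the ranges of the remaining free indices, and expand the squares; collecting terms, the pure $f_l^2$, $g_\gamma^2$, $(\sum_l f_l)^2$ and $(\sum_\gamma g_\gamma)^2$ contributions accumulate with the correct multiplicities, while the mixed $(\sum_l f_l)(\sum_\gamma g_\gamma)$ cross terms arising from types (5),(6) cancel exactly against those from types (7),(8). Summing the diagonal block and the off-diagonal block yields the stated quadratic form for $\|h^\xi\|^2$.

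There is no conceptual difficulty here; the step I expect to require the most care is the index bookkeeping --- keeping the ranges $2\le l\le n$, $n+2\le\gamma\le m$, $1\le c<d\le n$, $n+1\le\lambda<\mu\le m$ straight in the multiplicity counts, and checking the cancellation of the $f$--$g$ cross terms in the diagonal block --- rather than any genuine obstacle.
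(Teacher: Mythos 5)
Your strategy is exactly the paper's: reduce to $P_0$ by equivariance, split the nonzero entries into the diagonal types (5)--(8) and the off-diagonal types (1)--(4), collapse the $u,v$-sums with the identity $\sum_u\mathrm{Re}(\bar u a)\mathrm{Re}(\bar u b)=\langle a,b\rangle$, and expand the diagonal block; your observation that the $(\sum_l f_l)(\sum_\gamma g_\gamma)$ cross terms from (5),(6) cancel against those from (7),(8) is correct and reproduces the $f$--$g$ part of the formula. The trouble is in the off-diagonal block, and it sits precisely in the quantity you wave at as ``only supplying a multiplicity'': the free index $\alpha=\beta$ in types (1),(2) ranges over $m-n$ values and the free index $a=b$ in types (3),(4) over $n$ values, so carrying your own bookkeeping through gives $2d(m-n)\sum_{w,a<b}(p^w_{ab})^2+2dn\sum_{z,\alpha<\beta}(q^z_{\alpha\beta})^2$, not the $2d\sum_{w,a<b}(p^w_{ab})^2+2d\sum_{z,\alpha<\beta}(q^z_{\alpha\beta})^2$ asserted in the Proposition. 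A direct check on $G(2,4;\mathbb{R})$ confirms this: the four entries $h^\xi_{31,32},h^\xi_{32,31},h^\xi_{41,42},h^\xi_{42,41}$ all equal $-p^1_{12}$, contributing $4(p^1_{12})^2=2(m-n)(p^1_{12})^2$ rather than $2(p^1_{12})^2$. So your plan, executed faithfully, does not land on the stated identity --- it lands on a corrected version of it. (The paper's own proof has the same omission: it counts the sixteen $(u,v)$-pairs, four for each $w$, but never multiplies by the range of the remaining $\alpha$, respectively $a$.)

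This does not damage the rest of the section: since $m\ge 2n$ one has $2(m-n)\sum(p^w_{ab})^2+2n\sum(q^z_{\alpha\beta})^2\le 2(m-n)\bigl(\sum(p^w_{ab})^2+\sum(q^z_{\alpha\beta})^2\bigr)$, and substituting the unit-normal constraint \eqref{unit normal} still yields $\sup_\xi\|h^\xi\|^2=2d(m-n)$ (now attained at pure $p$-directions as well as pure $f$-directions), so the scaled bound $dn(m-n)^2/m$ and all subsequent vanishing-angle comparisons survive. But as a proof of the Proposition as literally stated, your concluding step ``yields the stated quadratic form'' fails; you need either to correct the coefficients of the $p$- and $q$-blocks in the statement to $2(m-n)$ and $2n$, or to explain where those factors are supposed to disappear.
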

\begin{proof}
The cases: $\mathbb{F}=\mathbb{R}$ and $\mathbb{F}=\mathbb{C}$ are ommited. For $\mathbb{F}=\mathbb{H}$, when $u,v$ are fixed, there are only one $w$ or $z$, such that $\mathrm{Re}^{2}(\bar u v w)$,$\mathrm{Re}^{2}(\bar u v \bar w)$, or $\mathrm{Re}^{2}(u \bar v \bar z)$,$\mathrm{Re}^{2}(u \bar v z)$ equals $1$. The number of these nonzero terms are sixteen which are divided into four terms of $w=1$, four terms of $w=i$, four terms of $w=j$, four terms of $w=k$, the same for $z$. the other nonzero terms are the sum of $(5)\sim(8)$, then we get the conclusion.
\end{proof}

The maximum of $||h^{\xi}||^{2}$ can be obtained by the following discussion, from \eqref{unit normal},

\begin{equation}\notag
\begin{aligned}
||h^{\xi}||^{2}&\leq d\Bigg[(m-n-1)\Big(\sum_{l=2}^{n}f_{l}^{2}+(\sum_{l=2}^{n}f_{l})^{2}\Big)\\
&+(n-1)\Big(\sum_{\gamma=n+2}^{m}g_{\gamma}^{2}+(\sum_{\gamma=n+2}^{m}g_{\gamma})^{2}\Big)-2s^2\Bigg]\\
&\leq 2d(m-n).
\end{aligned}
\end{equation}

$||h^{\xi}||^{2}=2d(m-n)$ if and only if $s=0$, and all $g_{\gamma},p_{cd}^{w},q_{\lambda \mu}^{z}$ are zeros.

Hence, we have
\begin{prop}
The upper bound of second fundamental form of cones over $G(n,m;\mathbb{F})$ at the points belong to unit sphere contained in $H(m;\mathbb{F})$ is given by: $sup_{\xi}||h^{\xi}||^{2}=\frac{dn(m-n)^{2}}{m}$, where $m\geq 2n\geq 4$.
\end{prop}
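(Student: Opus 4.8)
The plan is to bootstrap from the bound $\|h^\xi\|^2\le 2d(m-n)$ on the Grassmannian just established, using the homothety behaviour of the second fundamental form together with the cone formulas \eqref{mim submfd and its cone} (here I temporarily write $h^\xi$ for the second fundamental form of $G(n,m;\mathbb{F})$ as in the computation above, and $\tilde h^\xi$ for that of its cone, as in the cone formulas of Subsection 2.3). Passing from the projector embedding $P\mapsto P_L$ to $P-\tfrac nm I$, and then reducing the codimension into the flat affine hyperplane $\{\mathrm{tr}\,P=0\}$, changes neither the induced metric nor the second fundamental form, so the image of $P-\tfrac nm I$ in $S^{N-2}(r)$ with $r=\sqrt{n(m-n)/2m}$ still has $\sup_\xi\|h^\xi\|^2=2d(m-n)$. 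By Theorem~\ref{minimal models} the cone $C$ in question is the cone over this image, and since the image sits on the sphere of radius $r$ rather than the unit sphere I would rescale by $1/r$: the submanifold $\bar G:=\tfrac1r\bigl(G(n,m;\mathbb{F})-\tfrac nm I\bigr)\subset S^{N-2}(1)$ has the same cone $C$, and under the homothety $v\mapsto v/r$ every second fundamental form is multiplied by $r$, so
\begin{equation}\notag
\sup_\xi\|h^\xi_{\bar G}\|^2=r^2\cdot 2d(m-n)=\frac{n(m-n)}{2m}\cdot 2d(m-n)=\frac{dn(m-n)^2}{m}.
\end{equation}

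Next I would apply \eqref{mim submfd and its cone} to $\bar G\subset S^{N-2}(1)$ and its cone $C$. A point of $C$ lying on the unit sphere of $H(m;\mathbb{F})$ centred at the vertex is the point $t=1$ in the parametrization $Y(m,t)=t\bar X(m)$, and there the coefficients of the second fundamental form of $C$ are $\tilde h^\alpha_{00}=\tilde h^\alpha_{i0}=0$ and $\tilde h^\alpha_{ij}=h^\alpha_{ij}$ for $i,j\ge 1$; moreover the normal space of $C$ at that point coincides with that of $\bar G$ on $S^{N-2}(1)$. Hence $\|\tilde h^\xi\|^2=\|h^\xi_{\bar G}\|^2$ for every unit normal $\xi$ at that point, and taking the supremum over $\xi$ gives $\sup_\xi\|\tilde h^\xi\|^2=\tfrac{dn(m-n)^2}{m}$, which is the asserted value.

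Essentially all of the substantive work — the explicit evaluation of $\|h^\xi\|^2$ and its maximization over unit normals using \eqref{unit normal} — is already in hand, so the only point demanding care is to keep the radius $r=\sqrt{n(m-n)/2m}$ in the picture: it is exactly this homothety factor $r^2$ that converts the intrinsic bound $2d(m-n)$ into $\tfrac{dn(m-n)^2}{m}$. I do not anticipate any genuine obstacle beyond this bookkeeping; the hypothesis $m\ge 2n\ge 4$ enters only to ensure $n\ge 2$ and $m-n\ge 2$, so that both families of coefficients $f_l$ and $g_\gamma$ actually occur and the maximization producing $2d(m-n)$ is valid.
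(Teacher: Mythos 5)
Your argument is correct and is essentially the paper's own proof: the paper disposes of this proposition in one sentence, observing that since $G(n,m;\mathbb{F})$ is minimally embedded in a sphere of radius $r=\sqrt{n(m-n)/2m}$, the bound $2d(m-n)$ obtained on the Grassmannian gets multiplied by $r^{2}$ when passed to the cone at unit distance via \eqref{mim submfd and its cone}. Your more detailed bookkeeping — rescaling to the unit sphere, noting the second fundamental form scales by $r$ under the homothety, and evaluating the cone relation at $t=1$ — just fills in the step the paper leaves implicit.
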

\begin{proof}
Since $G(n,m;\mathbb{F})$ is an embedded minimal submanifold in a sphere of radius $\sqrt{\frac{n(m-n)}{2m}}$, then the result follows by multiply the square of radius, see \eqref{mim submfd and its cone}.
\end{proof}

For the normal radius of $C$, we have the following proposition, the result for $\mathbb{F}=\mathbb{R},\mathbb{C}$ is established in \cite{kerckhove1994isolated}, more generally, the normal radius for the canonical embedding of symmetric $R$-spaces are computed by using the Weyl group(\cite{kanno2002area}).

\begin{prop}\label{radius}
The normal radius of cones over $G(n,m;\mathbb{F})$ and $G(m-n,m;\mathbb{F})(\mathbb{F}=\mathbb{R},\mathbb{C},\mathbb{H})$  is $\arccos(1-\frac{m}{n(m-n)})$, it doesn't depend on the base field $\mathbb{F}$, and this is also right for projective space and Grassmannian of hyperplanes.
\end{prop}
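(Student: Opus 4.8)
The plan is to compute the normal radius directly from the geometry of the $\rho$-orbit, reducing everything to the origin $P_0$ by equivariance. Recall that for a minimal cone $C(M)$ over a submanifold $M\subset S^{N-2}(r)$, the normal radius is controlled by how far one can flow along geodesics in the normal directions of $C$ before the orbit (or its nearby translates under the isometry group) comes back to meet itself; concretely, since $G(n,m;\mathbb{F})$ is an orbit of the isometric action $\rho$ of $U(m;\mathbb{F})$, the normal radius of $C$ at $P_0$ is the first value of the angular parameter at which the normal geodesic from $P_0$ hits another point of the cone. First I would parametrize the normal space $N_{P_0}G(n,m;\mathbb{F})$ using the basis from \eqref{normal vectors at orgin}, namely block-diagonal Hermitian matrices $\begin{pmatrix}X&0\\0&Z\end{pmatrix}$; the relevant normal direction realizing the minimum is, by the symmetry of the configuration, the ``radial'' one pointing between $P_0-\frac{n}{m}I$ and $P_0^\perp-\frac{m-n}{m}I$, i.e.\ essentially $I-2P_0$ up to normalization.

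Next I would set up the one-parameter family. Writing a geodesic in $H(m;\mathbb{F})$ from $P_0-\frac{n}{m}I$ in the direction of the appropriate unit normal $\nu$, one gets $\gamma(\theta) = \cos\theta\,(P_0-\tfrac{n}{m}I) + \sin\theta\,\nu$ (after recentering at $\tfrac{n}{m}I$ and rescaling to the sphere of radius $r=\sqrt{n(m-n)/2m}$). The normal radius is the smallest $\theta>0$ for which $\gamma(\theta)$ lies on the cone over $G(n,m;\mathbb{F})$ again, equivalently for which the ray $\mathbb{R}_{\geq 0}\,\gamma(\theta)$ meets another projector $P\in G(n,m;\mathbb{F})$ (shifted by its center). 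Here the key computation is to identify, among all pairs of projectors $P,P'\in G(n,m;\mathbb{F})$, the one minimizing the relevant angle subtended at the center $\tfrac{n}{m}I$; by transitivity one can fix $P=P_0$ and vary $P'$, and the extremal $P'$ differs from $P_0$ in a single ``coordinate plane'' (swapping one basis vector of $L$ with one of $L^\perp$), so that $P'-P_0$ has rank two. Computing $g(P_0-\tfrac{n}{m}I, P'-\tfrac{n}{m}I)$ for such $P'$ and dividing by $r^2=\tfrac{n(m-n)}{2m}$ yields the cosine of the angle; the arithmetic gives $1-\frac{m}{n(m-n)}$, hence normal radius $\arccos\!\bigl(1-\frac{m}{n(m-n)}\bigr)$. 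Since this computation only involves real traces of rank-two Hermitian matrices and the inner product $g(A,B)=\tfrac12\mathrm{Re}\,tr_{\mathbb{F}}(AB)$, the answer is visibly independent of $\mathbb{F}$; the symmetry $P\leftrightarrow P^\perp$ gives the same value for $G(m-n,m;\mathbb{F})$, and setting $n=1$ or $n=m-1$ recovers the projective-space and hyperplane cases.

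The main obstacle I expect is the verification that the extremal configuration is indeed the rank-two swap — i.e.\ that no other projector $P'$ (differing from $P_0$ in two or more coordinate planes, or more subtly rotated) subtends a smaller angle at the center, and moreover that along the normal geodesic in the true direction $\nu$ one cannot reach the cone earlier by a non-radial shortcut. This is really a statement that the normal radius, defined via the ODE / retraction setup of Lawlor's Curvature Criterion, coincides with this orbit-distance quantity; for orbits of polar actions (which these are, being isolated orbits of $s$-representations) this equivalence is known, and one can alternatively invoke the Weyl-group computation of \cite{kanno2002area} for canonical embeddings of symmetric $R$-spaces, which gives exactly $\arccos(1-\tfrac{m}{n(m-n)})$ — so the cleanest route is to reduce to that reference after identifying our embedding with the symmetric $R$-space embedding, and to present the rank-two computation above as the concrete realization. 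The remaining steps (checking $g(P-\tfrac{n}{m}I,P-\tfrac{n}{m}I)=\tfrac{n(m-n)}{2m}$, already noted in the text, and the trace arithmetic for the swap) are routine.
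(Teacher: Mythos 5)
Your arithmetic for the rank-two swap is correct and produces the right value, but the proposal has two problems. First, the claim that the extremal normal direction is ``essentially $I-2P_0$'' is wrong: since $I-2P_0=\tfrac{m-2n}{m}I-2E_0$ with $E_0=P_0-\tfrac{n}{m}I$, this vector decomposes into a multiple of the identity plus a multiple of the radial vector $E_0$, so inside the trace-free hyperplane where the sphere lives it is purely radial and is not a normal direction of the cone at all; the actual minimizing normal is the traceless part of a single coordinate swap such as $e_{n+1}e_{n+1}^{T}-e_{n}e_{n}^{T}$. Second, and more seriously, the step you yourself flag as ``the main obstacle'' --- that no projector other than a single coordinate swap can be reached earlier along a normal geodesic, and that angle-to-the-nearest-orbit-point coincides with the normal radius --- is exactly the content of the proposition; deferring it to Kanno's Weyl-group computation turns the argument into a citation rather than a proof.

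The paper closes this gap with a short, self-contained diagonalization argument. Write the normal as $\xi=\begin{pmatrix}C&0\\0&D\end{pmatrix}$ with $C\in H(n;\mathbb{F})$, $D\in H(m-n;\mathbb{F})$, and choose $\mathbb{F}$-unitary $Q,T$ diagonalizing $C,D$; then $U=\begin{pmatrix}Q&0\\0&T\end{pmatrix}$ fixes $E_0$ and makes $\xi$ real diagonal, so any point $P-\tfrac{n}{m}I=\cos(\theta)E_0+\sin(\theta)\xi$ on the normal geodesic is diagonal in the same basis. Since $P^2=P$ and $tr_{\mathbb{F}}P=n$, the diagonal entries of $P-\tfrac{n}{m}I$ are $\tfrac{m-n}{m}$ with multiplicity $n$ and $-\tfrac{n}{m}$ with multiplicity $m-n$; hence every point of $G(n,m;\mathbb{F})$ reachable along a normal geodesic is a coordinate permutation of $P_0$, obtained by swapping some number $k\geq 1$ of pairs, with $\cos\theta=1-\tfrac{km}{n(m-n)}$. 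The angle is minimized at $k=1$, which settles both the extremality of the rank-two swap and the reachability question in one stroke, with no appeal to symmetric $R$-space machinery. You should replace your heuristic identification of the extremal direction with this argument.
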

\begin{proof}
As an orbit of isometry group action, we can only compute the normal radius at the origin $E_{0}:=P_{0}-\frac{n}{m}I$. Choose an normal vector $\xi$ of the cone at the origin with the same length of $E_{0}$, assume the normal geodesic $\mathrm{cos}(\theta)E_{0}+\mathrm{sin}(\theta)\xi$ intersects $G(n,m;\mathbb{F})$ at another point $P$, i.e.,
\begin{equation}\notag
\mathrm{cos}(\theta)E_{0}+\mathrm{sin}(\theta)\xi=P-\frac{n}{m}I.
\end{equation}

Let $\xi=\begin{pmatrix}C&0\\ 0&D \end{pmatrix}$, where $C\in H(n;\mathbb{F}),D\in H(m-n;\mathbb{F})$, follow \cite{rodman2014topics}, there exist $\mathbb{F}$-unitary matrices $Q,T$, such that $QCQ^{*}=\Lambda_{1}$,
$TDT^{*}=\Lambda_{2}$ are real diagonal matrices. Let $U=\begin{pmatrix}Q&0\\ 0&T \end{pmatrix}$, then $UE_{0}U^{*}=E_{0}$ and $U\xi U^{*}=\begin{pmatrix}\Lambda_{1}&0\\ 0&\Lambda_{2}\end{pmatrix}$ are all diagonal matrices. Then $U(P-\frac{n}{m}I)U^{*}$ is also diagonal which has diagonal elements $-\frac{n}{m},\frac{m-n}{m}$ of multiplicities $m-n,n$ since $P^2=P$.

Now $E_{0}=diag\{\frac{m-n}{m},\ldots,\frac{m-n}{m},-\frac{n}{m},\ldots,-\frac{n}{m}\}$, then the points which are nearest to $E_{0}$ are those interchange one pair of $\frac{m-n}{m}$ and $-\frac{n}{m}$ in $P_{0}-\frac{n}{m}I$, we get the conclusion.
\end{proof}

Now, we recall Gary R.Lawlor's work in \cite{lawlor1991sufficient}, for the following $ODE$ (see definition 1.1.6 in \cite{lawlor1991sufficient}):
\begin{equation}\label{VN}
\begin{cases}
\frac{dr}{d\theta}=r \sqrt{r^{2k}(\mathrm{cos}\theta)^{2k-2}\mathrm{inf}_{v}\left(det(I-\mathrm{tan}(\theta) h_{ij}^{v})\right)^{2}-1}\\
r(0)=1,
\end{cases}
\end{equation}
where $h_{ij}^{v}$ is the matrix representation of the second fundamental form of an minimal submanifold $M$ in sphere, $v$ is an unit normal, $k$ is the dimension of cone $C=C(M)$, and $r=r(\theta)$ describe a projection curve, the $ODE$ is build at a fixed point $p\in M$.

Denote the real vanishing angle by $\theta_{0}$(see Definition 1.1.7 in \cite{lawlor1991sufficient}), Lawlor use the following estimates.

Let $\theta_{1}(k,\alpha)$ be the estimated vanishing angle function which replacing $\mathrm{inf}_{v}det(I-\mathrm{tan}(\theta) h_{ij}^{v})$ by an smaller positive-valued function
\begin{equation}\notag
F(\alpha,\mathrm{tan}(\theta),k-1)=\left(1-\alpha \mathrm{tan}(\theta)\sqrt{\frac{k-2}{k-1}}\right)\left(1+\frac{\alpha \mathrm{tan}(\theta)}{\sqrt{(k-1)(k-2)}}\right)^{k-2}
\end{equation}
in \eqref{VN}, where the condition $\alpha^2 \mathrm{tan}^2(\theta_{1})\leq\frac{k-1}{k-2}$
should be satisfied, and $F$ is an decreasing function of $\alpha$ when $\mathrm{tan}(\theta),k$ are fixed, it is also decreasing with respect to $k$ when $\alpha,\mathrm{tan}(\theta)$ are fixed.

Let $\theta_{2}(k,\alpha)$ be the estimated vanishing angle function which replacing $\mathrm{inf}_{v}det(I-\mathrm{tan}(\theta) h_{ij}^{v})$ by
\begin{equation}\notag
lim_{k\rightarrow \infty} F(\alpha,\mathrm{tan}(\theta),k-1)=\big(1-\alpha \mathrm{tan}(\theta)\big)e^{\alpha \mathrm{tan}(\theta)}
\end{equation}
in \eqref{VN}, where the condition $\alpha^2 \mathrm{tan}^2(\theta_{2})\leq1$
should be satisfied, and $(1-\alpha \mathrm{tan}(\theta))e^{\alpha \mathrm{tan}(\theta)}$ is also an decreasing function of $\alpha$ when $\mathrm{tan}(\theta)$ are fixed.

The three angles have the following relation:
\begin{equation}\notag
\theta_{0}\leq \theta_{1}(k,\alpha)\leq \theta_{2}(k,\alpha),
\end{equation}
and Lawlor use the angle function $\theta_{1}$ for $dim\ C=\{3,\ldots,11\}$, the angle function $\theta_{2}$ for $dim\ C=12$ to gain "The table".

In the following, we compare the normal radius and vanishing angles of all the general Grassmannian manifolds $G(n,m;\mathbb{F})(4\leq 2n\leq m)$.

The cones having dimension of at most $12$ are:$G(2,4;\mathbb{R})$, $G(2,5;\mathbb{R})$, $G(2,6;\mathbb{R})$, $G(2,7;\mathbb{R})$, $G(3,6;\mathbb{R})$, $G(2,4;\mathbb{C})$. We list the results in the table below:

\begin{center}
\begin{tabular}{|c|c|c|c|c|}
\hline
$4\leq 2n\leq m$ & $\textrm{dim}\ C$ & $sup||h^{\xi}||^{2}$ & $\textrm{vanishing \ angle} \ \theta_{1}$ & $\textrm{normal \ radius}$ \\ \hline
$G(2,4;\mathbb{R})$ & 5 & 2 & $26.97^{\circ}$ & $\frac{\pi}{2}$ \\ \hline
$G(2,5;\mathbb{R})$ & 7 & 3.6 & $16.20^{\circ}\sim 16.44^{\circ}$ & $\arccos(\frac{1}{6})>80^{\circ}$ \\ \hline
$G(2,6;\mathbb{R})$ & 9 & 5.33 & $11.83^{\circ}\sim 12.14^{\circ}$ & $\arccos(\frac{1}{4})>75^{\circ}$ \\ \hline
$G(2,7;\mathbb{R})$ & 11 & 7.143 & $9.41^{\circ}\sim 9.54^{\circ}$ & $\arccos(\frac{3}{10})>70^{\circ}$ \\ \hline
$G(3,6;\mathbb{R})$ & 10 & 4.5 & $10.23^{\circ}$ & $\arccos(\frac{1}{3})>70^{\circ}$ \\ \hline
$G(2,4;\mathbb{C})$ & 9 & 4 & $11.57^{\circ}$ & $\frac{\pi}{2}$ \\ \hline
\end{tabular}
\end{center}
two times of $\theta_{1}$ are still less than the associated normal radius, so follow the Curvature Criterion(A simplified version refers to Theorem 1.3.5 in \cite{lawlor1991sufficient}), these cones are area-minimizing.

When the dimension of the cones $C$ over Grassmannian are greater than $12$, they are:

(1)~$G(2,m;\mathbb{R})(m\geq8)$, $G(3,m;\mathbb{R})(m\geq7)$ and $G(n,m;\mathbb{R})(m\geq2n\geq8)$;

(2)~$G(2,m;\mathbb{C})(m\geq5)$ and $G(n,m;\mathbb{C})(m\geq2n\geq6)$;

(3)~$G(n,m;\mathbb{H})(m\geq2n\geq4)$.

For estimating of vanishing angle, we use the following formula given by Gary R. Lawlor, if $dim(C)=k>12$, then
\begin{equation}\notag
\mathrm{tan}\big(\theta_{2}(k,\alpha)\big)< \frac{12}{k}\mathrm{tan}\left(\theta_{2}(12,\frac{12}{k}\alpha)\right),
\end{equation}

For Grassmannian $G(n,m;\mathbb{F})$, $k=dn(m-n)+1, \alpha=\sqrt{\frac{dn(m-n)^{2}}{m}}$, the normal radius is $\arccos(1-\frac{m}{n(m-n)})$, then
\begin{equation}\notag
\begin{aligned}
\mathrm{tan}\big(\theta_{2}(k,\alpha)\big)&< \frac{12}{dn(m-n)+1}\mathrm{tan}\left(\theta_{2}(12,\frac{12}{dn(m-n)+1}\sqrt{\frac{dn(m-n)^{2}}{m}})\right) \\
&<\frac{12}{dn(m-n)+1}\mathrm{tan}\left(\theta_{2}(12,\frac{12}{\sqrt{dnm}})\right),
\end{aligned}
\end{equation}

(1)~$d=1$, $m\geq 7$, $\frac{12}{\sqrt{dnm}}\leq 3$, $\mathrm{tan}\big(\theta_{2}(12,3)\big)=\mathrm{tan}(8.64^{\circ})\approx 0.152$,
then \begin{equation}\notag
\theta_{2}(k,\alpha)<\arctan(\frac{2}{n(m-n)+1})<\frac{2}{n(m-n)+1}<\frac{1}{m-2}.
\end{equation}

We will show that
\begin{equation}\label{inequality one}
\frac{2}{m-2}<\arccos\left(1-\frac{m}{n(m-n)}\right)
\end{equation}
is always true when $m\geq 7$, i.e. two times of vanishing angle is still less than the normal radius which satisfy the criterion given by Gary R. Lawlor.

The right hand of \eqref{inequality one} is greater than $\arccos(1-\frac{4}{m})$, so it is sufficient if $\cos(\frac{2}{m-2})>1-\frac{4}{m}$, then \eqref{inequality one} is true. Since $\cos(\frac{2}{m-2})>1-\frac{2}{(m-2)^{2}}$ and $2(m-2)^{2}\geq m(m\geq 7)$, then we get our conclusion.
Hence the associated cones are area-minimizing.

(2)~$d=2$, $m\geq 5$, $\frac{12}{\sqrt{dnm}}\leq \frac{6}{\sqrt{5}}$,  $\mathrm{tan}\left(\theta_{2}(12,\frac{6}{\sqrt{5}})\right)<\mathrm{tan}(8.64^{\circ})\approx 0.152$,

then \begin{equation}\notag
\theta_{2}(k,\alpha)<\arctan(\frac{2}{2n(m-n)+1})<\frac{2}{2n(m-n)+1}<\frac{1}{2(m-2)}.
\end{equation}

We will show that
\begin{equation}\label{inequality two}
\frac{1}{m-2}<\arccos\left(1-\frac{m}{n(m-n)}\right)
\end{equation}
is always true when $m\geq 5$.

The right hand of \eqref{inequality two} is greater than $\arccos(1-\frac{4}{m})$, so it is sufficient if $\cos(\frac{1}{m-2})>1-\frac{4}{m}$, then \eqref{inequality two} is true. Since $\cos(\frac{1}{m-2})>1-\frac{1}{2(m-2)^{2}}$ and $8(m-2)^{2}\geq m(m\geq 5)$, then we get our conclusion.
Hence the associated cones are area-minimizing.

(3)~$d=4$, $m\geq 4$, $\frac{12}{\sqrt{dnm}}\leq \frac{3}{\sqrt{2}}$,  $\mathrm{tan}\left(\theta_{2}(12,\frac{3}{\sqrt{2}})\right)=\mathrm{tan}(8.25^{\circ})\approx 0.145$,
then \begin{equation}\notag
\theta_{2}(k,\alpha)<\arctan(\frac{2}{4n(m-n)+1})<\frac{2}{4n(m-n)+1}<\frac{1}{4(m-2)},
\end{equation}

We will show that
\begin{equation}\label{inequality three}
\frac{1}{4(m-2)}<\arccos\left(1-\frac{m}{n(m-n)}\right)
\end{equation}
is always true when $m\geq 4$.

The right hand of \eqref{inequality three} is greater than $\arccos(1-\frac{4}{m})$, so it is sufficient if $\cos(\frac{1}{4(m-2)})>1-\frac{4}{m}$, then \eqref{inequality three} is true. Since $\cos(\frac{1}{4(m-2)})>1-\frac{1}{32(m-2)^{2}}$ and $128(m-2)^{2}\geq m(m\geq 4)$, then we get our conclusion.
Hence the associated cones are area-minimizing.

\begin{theorem}
The cones over nonoriented real Grassmannian
$G(n,m;\mathbb{R})$, complex Grassmannian $G(n,m;\mathbb{C})$, quaternion Grassmannian $G(n,m;\mathbb{H})$ are area-minimizing, where $m\geq 2n \geq 4$.
\end{theorem}

\begin{remark}
$G(n,m;\mathbb{R})$ is oriented if and only if $m$ is even(see\cite{oproiu1977some}), so those area-minimizing cones over unoriented Grassmannians $G(n,m;\mathbb{R})(m$ is odd) generalize the results of Gary R. Lawlor's.
\end{remark}

\section{Cones over projective spaces}
Now we consider projective space: $\mathbb{F}P^{m-1}=G(1,m;\mathbb{F})(m\geq 2,\mathbb{F}=\mathbb{C},\mathbb{H})$, the cones over $\mathbb{R}P^{m-1}$ has been shown area-minimizing in \cite{lawlor1991sufficient} except the case $m=3$. when $m=3$, the cone over embedding of $\mathbb{R}P^{2}$ is the Veronese cone which owns the least area among a large class of comparison surfaces\cite{murdoch1991twisted},\cite{lawlor1995note}, though, we note here the Veronese cone being area-minimizing is still an open problem. In this section, we prove that the cones over $\mathbb{C}P^{m-1}$ and $\mathbb{H}P^{m-1}$ are all area-minimizing for $m\geq 2$.

We will adopt the following ranges of indices in this section unless otherwise stated:
\begin{center}
$2\leq \alpha,\beta,\lambda,\mu \leq m$;

$u,v,z \in \{1, i, j, k\}$,
\end{center}
where $i,j,k$ denote the imaginary units of $\mathbb{H}$.
\subsection{$m=2$}
These cases are trivial, the spheres $\mathbb{F}P^{1}(\mathbb{F}=\mathbb{C},\mathbb{H})$ are embedded into spheres as minimal hypersurfaces, in fact, they are all equators by the reduction of codimension and the associated cones are plane, similar result also holds for Cayley projective line $\mathbb{O}P^{1}$ which is isometry isomorphic to $S^{8}$.

Consider in the ambient space $H(2;\mathbb{F})$ (for $\mathbb{F}=\mathbb{R},\mathbb{C},\mathbb{H},\mathbb{O}$ uniformly), the origin of $\mathbb{F}P^{1}$ is: $P_{0}=E_{11}=e_{1}e_{1}^{T}$, an orthonormal basis of tangent space of $\mathbb{F}P^{1}$ at $P_{0}$(or minus the center) are: $E^{u}_{2}=ue_{2}e_{1}^{T}+\bar u e_{1}e_{2}^{T}$($u=1$ when $\mathbb{F}=\mathbb{R}$, $u\in \{1,i\}$ when $\mathbb{F}=\mathbb{C}$, $u\in \{1,i,j,k\}$ when $\mathbb{F}=\mathbb{H}$, etc). The unit normal to the cone over $\mathbb{F}P^{1}$ at $P_{0}$ is: $\xi_{0}=I_{2}$, the identity matrix.

The second fundamental forms in $H(2;\mathbb{F})$ are given by:
\begin{equation}\notag
h(E^{u}_{2},E^{v}_{2})=\delta_{uv}(-2E_{11}+2E_{22})=-4\delta_{uv}(P_{0}-\frac{I}{2}),
\end{equation}
and when restricted on the sphere, all these second fundamental forms turn to be zero. So after minus the center, the minimal embeddings  $P-\frac{I}{2}$ is totally geodesic. They are all equators by the reduction of codimension and the associated cones are $2$-plane, $3$-plane, $5$-plane and $9$-plane which have no singularity at origin, hence be area-minimizing.

\subsection{$m\geq 3$}
The origin of $\mathbb{F}P^{m-1}(\mathbb{F}=\mathbb{C},\mathbb{H})$ is: $P_{0}=E_{11}=e_{1}e_{1}^{T}$, an orthonormal basis of tangent space of $\mathbb{F}P^{m-1}$ at $P_{0}$(minus the center) are: $E^{u}_{\alpha}=ue_{\alpha}e_{1}^{T}+\bar u e_{1}e_{\alpha}^{T}(2\leq\alpha\leq m)$, an  basis of normal space to the cone over $\mathbb{F}P^{m-1}$ at $P_{0}$ are given by: $\xi_{0}=\sqrt{\frac{2}{m}}I_{m}$, and $H_{l}=e_{2}e_{2}^{T}-e_{l}e_{l}^{T}(3\leq l\leq m)$, $H_{\lambda \mu}^{z}=z e_{\lambda}e_{\mu}^{T}+\bar z e_{\mu}e_{\lambda}^{T}(2\leq\lambda<\mu\leq m)$, where all normals in the above have length $1$ and orthogonal to each other except any two in $H_{l}$, and $g(H_{l},H_{b})=\frac{1}{2}$ if there exits $2\leq l< b\leq m$. We note here when $m=3$, the embeddings of $\mathbb{F}P^{2}(\mathbb{F}=\mathbb{R},\mathbb{C},\mathbb{H})$ are often called Veronese embeddings.

The second fundamental forms are given by:
\begin{equation}\notag
h(E_{\alpha}^{u},E_{\beta}^{v})=-\delta_{\alpha\beta}(\bar uv+\bar vu)E_{11}+(u\bar vE_{\alpha \beta}+v\bar uE_{\beta \alpha}).
\end{equation}

We continue to compute:

$g(h(E_{\alpha}^{u},E_{\beta}^{v}),\xi_{0})=0$;

$g(h(E_{\alpha}^{u},E_{\beta}^{v}),H_{l})=(\delta_{\alpha2}\delta_{\beta2}-\delta_{\alpha l}\delta_{\beta l})\delta_{uv}$;

$g(h(E_{\alpha}^{u},E_{\beta}^{v}),H_{\lambda \mu}^{z})=\mathrm{Re}(u \bar v z)\delta_{\alpha \mu}\delta_{\beta \lambda}+\mathrm{Re}(u \bar v \bar z)\delta_{\alpha \lambda}\delta_{\beta \mu}$.

Choose a unit normal vector at $P_{0}$,
\begin{equation}\notag
\xi=s\xi_{0}+\sum_{l=3}^{m}f_{l}H_{l}+\sum_{z,\lambda<\mu}q_{\lambda \mu}^{z}H_{\lambda \mu}^{z},
\end{equation}

then
\begin{equation}\label{unit normal two}
s^{2}+\frac{1}{2}\sum_{l=3}^{m}f_{l}^{2}+\frac{1}{2}\left(\sum_{l=3}^{m}f_{l}\right)^{2}+\sum_{z,\lambda<\mu}(q_{\lambda \mu}^{z})^{2}=1.
\end{equation}

We denote $h_{u\alpha,v\beta}^{\xi}=g(h(E_{\alpha}^{u},F_{\beta}^{v}),\xi)$, and $||h^{\xi}||^{2}=\sum_{u\alpha,v\beta}(h_{u\alpha,v\beta}^{\xi})^{2}$, then

\begin{equation}\notag
h_{u\alpha,v\beta}^{\xi}=
\sum_{l=3}^{m}(\delta_{\alpha2}\delta_{\beta2}-\delta_{\alpha l}\delta_{\beta l})\delta_{uv}f_{l}
+\Big[\mathrm{Re}(u \bar v z)\delta_{\alpha \mu}\delta_{\beta \lambda}+\mathrm{Re}(u \bar v \bar z)\delta_{\alpha \lambda}\delta_{\beta \mu}\Big]q_{\lambda \mu}^{z}.
\end{equation}

Note $\lambda<\mu$, the nonzero terms are divided into four types:

(1)~$\alpha=\beta=2$, the result is: $\delta_{uv}\sum f_{l}$;

(2)~$\alpha=\beta\geq3$, the results are: $-\delta_{uv}f_{\alpha}$;

(3)~$\alpha>\beta$, the results are: $\mathrm{Re}(u \bar v z)q_{\beta \alpha}^{z}$;

(4)~$\alpha<\beta$, the results are: $\mathrm{Re}(u \bar v \bar z)q_{\alpha \beta}^{z}$.

By \eqref{unit normal two}, we have:
\begin{equation}\notag
||h^{\xi}||^{2}=d\Big[2-2s^{2}-(\sum_{l=3}^{m}f_{l})^{2}\Big]\leq 2d,
\end{equation}
the equality holds if and only if $s=0, \sum_{l=3}^{m}f_{l}=0$.

By multiply the square of radius of sphere $\frac{m-1}{2m}$, we have
\begin{prop}
The upper bound of second fundamental form of cones over $\mathbb{F}P^{m-1}(m\geq 3)$ at the points belong to unit sphere contained in $H(m;\mathbb{F})$ is $sup_{\xi}||h^{\xi}||^{2}=\frac{d(m-1)}{m}$.
\end{prop}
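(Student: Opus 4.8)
The plan is to follow, for $n=1$, exactly the route used just above for the general Grassmannians: convert the bound on the second fundamental form of $\mathbb{F}P^{m-1}$ inside $H(m;\mathbb{F})$ into the corresponding bound for its cone along the unit sphere by means of the scaling relations \eqref{mim submfd and its cone}. First I would record that the computation immediately preceding the statement already yields, for the second fundamental form $h$ of $\mathbb{F}P^{m-1}$ at $P_{0}$ and any unit normal $\xi$, the identity $||h^{\xi}||^{2}=d\bigl[2-2s^{2}-(\sum_{l=3}^{m}f_{l})^{2}\bigr]\le 2d$, with equality precisely when $s=0$ and $\sum_{l=3}^{m}f_{l}=0$. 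Such $\xi$ do occur: for $m\ge 4$ the vector $\xi=H_{3}-H_{4}$ is a unit normal (since $g(H_{3},H_{4})=\tfrac12$) with $||h^{\xi}||^{2}=2d$, and for $m=3$ one may take $\xi=H_{23}^{1}$. Thus the bound $2d$ is attained. Since $h$ is equivariant under the transitive isometric action $\rho$, the maximum at $P_{0}$ is the maximum at every point, so $\sup_{\xi}||h^{\xi}||^{2}=2d$ for $\mathbb{F}P^{m-1}$ in $H(m;\mathbb{F})$.

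Next I would check that this quantity is the \emph{spherical} second fundamental form. By Theorem~\ref{minimal models} with $n=1$, $\mathbb{F}P^{m-1}=G(1,m;\mathbb{F})$ is minimally embedded in $S^{N-2}(r)$ with $r=\sqrt{(m-1)/2m}$, the sphere centered at $\tfrac1m I$. Every normal-frame vector $\xi_{0}\propto I_{m}$, $H_{l}$, $H_{\lambda\mu}^{z}$ at $P_{0}$ is $g$-orthogonal to the radial vector $P_{0}-\tfrac1m I$: for $\xi_{0}$ this is just $\mathrm{tr}(P_{0}-\tfrac1m I)=0$, and for the others it follows from $H_{l}P_{0}=H_{\lambda\mu}^{z}P_{0}=0$ together with $\mathrm{tr}\,H_{l}=\mathrm{tr}\,H_{\lambda\mu}^{z}=0$. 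Hence the whole normal space of the cone at $P_{0}$ is tangent to $S^{N-2}(r)$, and the $||h^{\xi}||^{2}$ just computed is the squared norm of the second fundamental form of $\mathbb{F}P^{m-1}$ regarded as a submanifold of $S^{N-2}(r)$.

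Finally I would invoke \eqref{mim submfd and its cone}: rescaling $\mathbb{F}P^{m-1}\subset S^{N-2}(r)$ to a minimal submanifold of the unit sphere multiplies the squared norm of its second fundamental form by $r^{2}$, and then taking $t=1$ in \eqref{mim submfd and its cone} identifies the second fundamental form of the cone along the unit sphere with that of this rescaled minimal submanifold. Therefore $\sup_{\xi}||h^{\xi}||^{2}=r^{2}\cdot 2d=\tfrac{m-1}{2m}\cdot 2d=\tfrac{d(m-1)}{m}$, as asserted. There is no essential obstacle; the only points needing care are the correct bookkeeping of the radial scaling factor $r^{2}$ and the verification in the second paragraph that the chosen normals are genuinely tangent to $S^{N-2}(r)$, so that the output of the linear-algebra computation is the spherical, rather than the ambient Euclidean, second fundamental form.
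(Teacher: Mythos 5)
Your proposal is correct and follows essentially the same route as the paper: the paper likewise takes the bound $||h^{\xi}||^{2}\le 2d$ from the preceding computation and simply multiplies by the square of the sphere radius $\tfrac{m-1}{2m}$ via the scaling relation \eqref{mim submfd and its cone}. Your additional checks (that the bound $2d$ is attained and that the listed normals are tangent to $S^{N-2}(r)$) are correct and merely make explicit what the paper leaves implicit.
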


The calculation of normal radius is similar to the cases of general Grassmannians, we conclude that

\begin{prop}
The normal radius of $\mathbb{F}P^{m-1}(m\geq 3,\mathbb{F}=\mathbb{C},\mathbb{H})$ are $\mathrm{arccos}\left(-\frac{1}{m-1}\right)$.
\end{prop}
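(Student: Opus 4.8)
The plan is to run the argument of Proposition~\ref{radius} in the special case $n=1$. Since $\mathbb{F}P^{m-1}=G(1,m;\mathbb{F})$ is a single orbit of the isometric action $\rho$, it suffices to compute the normal radius at the base point $E_{0}:=P_{0}-\tfrac1m I$. I would fix a unit normal vector $\xi$ of the cone at $E_{0}$; by \eqref{normal vectors at orgin} (with $n=1$) it is block-diagonal, consisting of a real $1\times1$ entry $c$ and an $(m-1)\times(m-1)$ Hermitian block $D$. Assume the normal geodesic $t\mapsto\mathrm{cos}(t)E_{0}+\mathrm{sin}(t)\xi$ meets $\mathbb{F}P^{m-1}$ again, at a point $P-\tfrac1m I$, and let $\theta$ be the value of $t$ there.

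The key step is the same simultaneous-diagonalization trick used in Proposition~\ref{radius}: following \cite{rodman2014topics}, pick an $\mathbb{F}$-unitary $U$ with trivial first block so that $U\xi U^{*}$ is a real diagonal matrix while $UE_{0}U^{*}=E_{0}$ stays fixed; then $U(P-\tfrac1m I)U^{*}$ is real diagonal as well. Because $P^{2}=P$ and $\mathrm{tr}_{\mathbb{F}}P=1$, this diagonal matrix has the entry $\tfrac{m-1}{m}$ in exactly one slot and $-\tfrac1m$ in the remaining $m-1$ slots, and since $UE_{0}U^{*}=\mathrm{diag}\{\tfrac{m-1}{m},-\tfrac1m,\dots,-\tfrac1m\}$, the only possibilities other than $E_{0}$ itself are the $m-1$ matrices obtained by moving the entry $\tfrac{m-1}{m}$ into one of the other slots. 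By symmetry all of them lie at the same distance from $E_{0}$, so any one of them realizes the normal radius.

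It then remains to compute that distance. With $g(A,B)=\tfrac12\mathrm{Re}\,\mathrm{tr}_{\mathbb{F}}(AB)$ one has the elementary identities $g(E_{0},E_{0})=\tfrac{m-1}{2m}$ and, for a nearest re-intersection point $P$ as above, $g(E_{0},P-\tfrac1m I)=-\tfrac1{2m}$; hence
\begin{equation}\notag
\mathrm{cos}\,\theta=\frac{g\!\left(E_{0},\,P-\tfrac1m I\right)}{g(E_{0},E_{0})}=-\frac{1}{m-1},
\end{equation}
which is precisely the value $\arccos\!\left(1-\tfrac{m}{n(m-n)}\right)$ of Proposition~\ref{radius} evaluated at $n=1$. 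I do not anticipate a real obstacle: the constraint $\mathrm{tr}_{\mathbb{F}}P=1$ forces the rank of $P$ to be one, so the set of candidate re-intersection points is finite and acted on transitively (after fixing $E_{0}$) by the relevant Weyl-type symmetry, which makes the minimization trivial; the only mildly technical point is justifying the simultaneous diagonalization when $\mathbb{F}=\mathbb{H}$, and this is handled exactly as in Proposition~\ref{radius} through the real representation $\chi$ and the normal form in \cite{rodman2014topics}.
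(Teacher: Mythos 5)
Your proposal is correct and follows essentially the same route as the paper, which simply specializes the simultaneous-diagonalization argument of Proposition~\ref{radius} to $n=1$; your identification of the $m-1$ candidate re-intersection points and the computation $\cos\theta=(-\tfrac1{2m})/(\tfrac{m-1}{2m})=-\tfrac1{m-1}$ match the value $\arccos(1-\tfrac{m}{n(m-n)})$ at $n=1$.
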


Now, $k=d(m-1)+1$, $\alpha^{2}=\frac{d(m-1)}{m}$, we will exhibit the estimated vanishing angles for all the cones over $\mathbb{F}P^{m-1}(m\geq 3)$ in the following.

When $\mathbb{F}=\mathbb{R}$, such cases have been studied in \cite{lawlor1991sufficient}.

When $\mathbb{F}=\mathbb{C}$ and  $\mathbb{F}=\mathbb{H}$, we list the cones having dimension no more than $12$ in the table below:

\begin{center}
\begin{tabular}{|c|c|c|c|}
\hline
$\mathbb{F}P^{m-1}$ & $\textrm{dim}\ C$ & $sup||h^{\xi}||^{2}$ & $\textrm{vanishing \ angle} \ \theta_{1}$  \\ \hline
$\mathbb{C}P^{2}$ & 5 & $4/3$ & $23.43^{\circ}\sim 23.73^{\circ}$ \\ \hline
$\mathbb{C}P^{3}$ & 7 & $3/2$ & $14.91^{\circ}$  \\ \hline
$\mathbb{C}P^{4}$ & 9 & $8/5$ & $11.10^{\circ}$  \\ \hline
$\mathbb{C}P^{5}$ & 11 & $5/3$ & $8.87^{\circ}\sim 8.88^{\circ}$  \\ \hline
$\mathbb{H}P^{2}$ & 9 & $8/3$ & $11.26^{\circ}\sim 11.36^{\circ}$  \\ \hline
\end{tabular}
\end{center}
all the above cones have vanishing angle no more than $\frac{\pi}{4}$, so they are all area-minimizing.

When dim\ $C>12$, the associated projective space are:

(1)~$\mathbb{C}P^{m-1}$, $m\geq 7$;

(2)~$\mathbb{H}P^{m-1}$, $m\geq 4$.

For estimating of vanishing angle, we use the following formula given by Gary R.Lawlor,
\begin{equation}\notag
\mathrm{tan}\Big(\theta_{2}(k,\alpha)\Big)< \frac{12}{k}\mathrm{tan}\left(\theta_{2}(12,\frac{12}{k}\alpha)\right).
\end{equation}

Now, $k=d(m-1)+1>12$, $\alpha=\sqrt{\frac{d(m-1)}{m}}$, then
\begin{equation}\notag
\begin{aligned}
\mathrm{tan}\Big(\theta_{2}(k,\alpha)\Big)&< \frac{12}{k}\mathrm{tan}\left(\theta_{2}(12,\frac{12}{d(m-1)+1}\sqrt{\frac{d(m-1)}{m}})\right) \\
&<\frac{12}{k}\mathrm{tan}\left(\theta_{2}(12,\frac{12}{\sqrt{d(m-1)m}})\right) \\
&<\mathrm{tan}\left(\theta_{2}(12,\frac{12}{\sqrt{d(m-1)m}})\right) \\
\end{aligned}
\end{equation}

(1)~$\mathbb{C}P^{m-1}$, $d=2$, $m\geq 7$, then $\frac{12}{\sqrt{d(m-1)m}}\leq \sqrt{\frac{12}{7}}$, by checking "The Table", we have
\begin{equation}\notag
\theta_{2}(k,\alpha)<\theta_{2}\left(12,\sqrt{\frac{12}{7}}\right)<8.07^{\circ}<\frac{\pi}{4}.
\end{equation}

(2)~$\mathbb{H}P^{m-1}$, $d=4$, $m\geq 4$, then $\frac{12}{\sqrt{d(m-1)m}}\leq \sqrt{3}$, by checking "The Table", we have
\begin{equation}\notag
\theta_{2}(k,\alpha)<\theta_{2}(12,\sqrt{3})=8.15^{\circ}<\frac{\pi}{4}.
\end{equation}

Hence, we have
\begin{theorem}
The cones over $\mathbb{C}P^{m-1}$,$\mathbb{H}P^{m-1}$ are area-minimizing, where $m\geq 2$.
\end{theorem}

\section{Cones over Cayley plane}
Similar to the projective spaces, the Cayley plane $\mathbb{O}P^2$ can also be identified as the set of Hermitian orthogonal projectors, then be embedded as an minimal submanifold in a $25$-dimensional sphere contained in the exceptional Jordan algebra $H(3,\mathbb{O})$(\cite{sakamoto1977planar},\cite{chen1984total}), and it is one of the Veronese embeddings of $\mathbb{F}P^2(\mathbb{F}=\mathbb{R},\mathbb{C},\mathbb{H},\mathbb{O})$. In $2015$, Shinji Ohno and Takashi Sakai first confirmed its cone being area-minimizing from the point view of canonical embedding of symmetric $\mathbb{R}$-space(\cite{ohno2015area}), also see an point view of isoparametric theory(\cite{tang2020minimizing}). In this section, we will give an direct proof for it from the point view of Hermitian orthogonal projectors, some basic facts associated to $\mathbb{O}P^2$ are also exhibited. This section is mainly based on \cite{sakamoto1977planar},\cite{harvey1982calibrated}, \cite{harvey1990spinors}.

The octonions, also called the Cayley numbers, are the last algebra in the Cayley-Dickson sequence which form a divison algebra. It is $\mathbb{O}=\mathbb{H}\oplus \mathbb{H}$ with the following multiplication
\begin{equation}
(a,b)(c,d):=(ac-\bar db,da+b\bar c),\notag
\end{equation}
where $(a,b),(c,d)\in\mathbb{H}\oplus \mathbb{H}$, the conjugation operator is defined by
\begin{equation}
\overline{(a,b)}=(\bar a,-b).\notag
\end{equation}

For an octonion $u$, writing $\mathrm{Re}\ u=\frac{u+\bar u}{2}, \mathrm{Im} \ u=\frac{u-\bar u}{2}$, the octonion can be identified with the Euclidean space $\mathbb{R}^8$ with the inner product
\begin{equation}\notag
\langle u,v \rangle=\mathrm{Re}\ (u\bar v),
\end{equation}
and the norm is defined as $|u|=\sqrt{\langle u,u \rangle}$.

Let $H(3,\mathbb{O})$ be the set of all $3\times 3$ hermitian matrices whose entries are octonions, i.e. $H(3,\mathbb{O})=\{A\in M(3,\mathbb{O})|A^{*}=A\}$, it is an Jordan algebra with the Jordan multiplication
\begin{equation}\notag
A\circ B=\frac{AB+BA}{2} \ for \ A,B \in H(3,\mathbb{O}),\notag
\end{equation}
it is also an real Euclidean space of dimension $27$ with the inner product
\begin{equation}\notag
\langle A,B \rangle=\frac{tr\ (A\circ B)}{2} \ for \ A,B \in H(3,\mathbb{O}),
\end{equation}
and norm $|A|:=\sqrt{\langle A,A \rangle}$.

Each element $A\in H(3,\mathbb{O})$ has the typical form
\begin{equation}\notag
A=\begin{pmatrix}
r_{1} & \bar x_{3} &\bar x_{2} \\
x_{3} & r_{2} & x_{1}\\
x_{2} & \bar x_{1} & r_{3}
\end{pmatrix},
\end{equation}
where $r_{1},r_{2},r_{3}\in \mathbb{R}, x_{1},x_{2},x_{3}\in \mathbb{O}$, or simply written, $A=\{\mathbf{r},\mathbf{x}\}=\{\mathbf{r},(x_{1},x_{2},x_{3})\}$, if $B=\{\mathbf{s},\mathbf{y}\}$, then
\begin{equation}\notag
\langle A,B \rangle=\frac{1}{2}\langle \mathbf{r},\mathbf{s} \rangle+\sum_{i=1}^{3}\langle x_{i},y_{i} \rangle.
\end{equation}

\begin{definition}
The Cayley plane is defined by
\begin{equation}\notag
\mathbb{O}P^2\equiv \{A\in H(3,\mathbb{O})|A^2=A,\ tr_{\mathbb{O}}A=1\}.
\end{equation}
\end{definition}

\begin{prop}\cite{harvey1990spinors}
\begin{equation}\notag
\mathbb{O}P^2= \left\{a \bar{a}^{t}|a^{t}=(a_{1},a_{2},a_{3})\in \mathbb{O}^{3}, |a_{1}|^{2}+|a_{2}|^{2}+|a_{3}|^{2}=1, [a_{1},a_{2},a_{3}]=0 \right\},
\end{equation}
which is a $16$-dimensional compact submanifold of $H(3,\mathbb{O})$.
\end{prop}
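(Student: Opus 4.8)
The plan is to prove the two set inclusions separately and then read the manifold structure off the second argument. The only non-associative facts I will need are the composition-algebra identities $x\bar x=\bar x x=|x|^2\in\mathbb{R}$, multiplicativity of the octonion norm, and the standard fact that three octonions with vanishing associator generate an associative (quaternion) subalgebra of $\mathbb{O}$; this last fact is where non-associativity really bites and where the hypothesis $[a_1,a_2,a_3]=0$ is used, so pinning it down (or citing it from \cite{harvey1990spinors}) will be the main obstacle.

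To prove the inclusion $\supseteq$, I would fix $a^t=(a_1,a_2,a_3)$ with $\sum_i|a_i|^2=1$ and $[a_1,a_2,a_3]=0$ and set $P:=a\bar a^t$, so $P_{ij}=a_i\bar a_j$. Then $P^*=P$ since $\overline{a_i\bar a_j}=a_j\bar a_i$, and $\mathrm{tr}_{\mathbb{O}}P=\sum_i a_i\bar a_i=1$, so only $P^2=P$ needs work. The diagonal entry is $(P^2)_{ii}=\sum_k(a_i\bar a_k)(a_k\bar a_i)=|a_i|^4+\sum_{k\neq i}|a_i\bar a_k|^2=|a_i|^2\sum_k|a_k|^2=|a_i|^2$ by multiplicativity of the norm. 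For $i\neq j$, in $(P^2)_{ij}=\sum_k(a_i\bar a_k)(a_k\bar a_j)$ the summands with $k=i$ and $k=j$ are $|a_i|^2(a_i\bar a_j)$ and $|a_j|^2(a_i\bar a_j)$ because $a_i\bar a_i$ and $a_j\bar a_j$ are real scalars, while for the one remaining index $k$ (so $\{i,j,k\}=\{1,2,3\}$) the vanishing associator places $a_i,a_k,\bar a_k,\bar a_j$ in one associative subalgebra, giving $(a_i\bar a_k)(a_k\bar a_j)=a_i\big((\bar a_k a_k)\bar a_j\big)=|a_k|^2(a_i\bar a_j)$. Adding the three contributions yields $(P^2)_{ij}=\big(\sum_k|a_k|^2\big)(a_i\bar a_j)=P_{ij}$, so $P\in\mathbb{O}P^2$.

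For the inclusion $\subseteq$, I would start from $P\in H(3,\mathbb{O})$ with $P^2=P$ and $\mathrm{tr}_{\mathbb{O}}P=1$. From $P=P^*P$ the real diagonal entries satisfy $r_i:=P_{ii}=\sum_k|P_{ki}|^2\ge0$ and $r_1+r_2+r_3=1$, so some $r_i>0$; since conjugation by a permutation matrix preserves all hypotheses and only permutes the components of any eventual $a$ (changing $[a_1,a_2,a_3]$ by a sign), I may assume $r_1>0$. Comparing diagonal and off-diagonal entries in $P^2=P$ and solving a small linear system gives $|P_{ij}|^2=r_ir_j$ and $P_{ik}P_{kj}=r_kP_{ij}$ for pairwise distinct $i,j,k$. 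Then I would put $a_1:=\sqrt{r_1}\in\mathbb{R}_{>0}$, $a_2:=P_{21}/\sqrt{r_1}$, $a_3:=P_{31}/\sqrt{r_1}$, and use these identities to check $a\bar a^t=P$ entry by entry, $\sum_i|a_i|^2=r_1+r_2+r_3=1$, and $[a_1,a_2,a_3]=0$ automatically (because $a_1$ is real, hence central). Undoing the permutation then displays $P$ in the required form.

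Finally, for $A=P\in\mathbb{O}P^2$ one has $|P|^2=\tfrac12\mathrm{tr}(P\circ P)=\tfrac12\mathrm{tr}_{\mathbb{O}}P=\tfrac12$, so $\mathbb{O}P^2$ lies on the radius-$1/\sqrt2$ sphere in $H(3,\mathbb{O})\cong\mathbb{R}^{27}$ and is cut out there by the polynomial equation $A^2=A$; being closed and bounded, it is compact. The $\subseteq$ computation also shows that on the open set $\{P_{11}>0\}$ the map $P\mapsto\big(\sqrt{P_{11}},\,P_{21}/\sqrt{P_{11}},\,P_{31}/\sqrt{P_{11}}\big)$ is a smooth bijection onto the open hemisphere $\{a=(a_1,a_2,a_3):a_1\in\mathbb{R}_{>0},\ \sum_i|a_i|^2=1\}$ of the unit sphere $S^{16}\subset\mathbb{R}\times\mathbb{O}^2$, with smooth inverse $a\mapsto a\bar a^t$; the three charts $\{P_{ii}>0\}$, $i=1,2,3$, cover $\mathbb{O}P^2$ and overlap smoothly, so $\mathbb{O}P^2$ is a compact embedded $16$-dimensional submanifold of $H(3,\mathbb{O})$ (alternatively, transitivity of the $F_4$-action with isotropy $\mathrm{Spin}(9)$ gives $\dim\mathbb{O}P^2=52-36=16$).
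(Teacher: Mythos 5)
Your argument is correct and essentially complete. Note that the paper itself offers no proof of this proposition at all -- it is stated as a citation to Harvey's \emph{Spinors and Calibrations} -- so there is no in-paper argument to compare against; what you have written is a self-contained proof of the cited fact. Both inclusions check out: for $\supseteq$, the diagonal entries of $P^2$ need only $x\bar x=|x|^2$ and norm multiplicativity, the $k\in\{i,j\}$ off-diagonal terms need only that $a_i\bar a_i$ is a central real scalar, and the one genuinely non-associative term $(a_i\bar a_k)(a_k\bar a_j)$ is handled correctly once $a_1,a_2,a_3$ lie in a common associative subalgebra. For $\subseteq$, the linear system $|P_{ij}|^2+|P_{ik}|^2=r_i(r_j+r_k)$ does invert to give $|P_{ij}|^2=r_ir_j$, the identity $P_{ik}P_{kj}=r_kP_{ij}$ follows from the off-diagonal entries of $P^2=P$, and your choice $a_1=\sqrt{r_1}$ real makes the associator condition automatic; the chart and compactness arguments are standard and fine. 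The single point that remains external is the fact you flag yourself: that $[a_1,a_2,a_3]=0$ forces the three octonions (hence their conjugates) into a common quaternion subalgebra. This is exactly the Harvey--Lawson characterization of associative $3$-planes in $\mathrm{Im}\,\mathbb{O}$ (it is also what the Remark immediately following the proposition in the paper asserts), so citing it from \cite{harvey1990spinors} or \cite{harvey1982calibrated} is legitimate; if you wanted full self-containment, that lemma is the one thing you would still have to prove.
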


\begin{remark}
The above theorem tells us, for every $A\in \mathbb{O}P^2$, it can be written as an octonion matrix whose elements belong to one of the distinguished quaternion subalgebra $\mathbb{\widetilde{H}}\subset \mathbb{O}$, so in fact $A\in H(3,\mathbb{\widetilde{H}})$,  the set of all distinguished quaternion subalgebras, the so called associative Grassmannian, is isomorphic to the homogeneous space $G_{2}/SO(4)$\cite{harvey1982calibrated}.
\end{remark}

The exceptional compact lie group $F_{4}$ is defined as the automorphism group of the Jordan algebra $H(3,\mathbb{O})$, for $A,B\in H(3,\mathbb{O})$,
\begin{equation}\notag
F_{4}\equiv \{g\in GL(H(3,\mathbb{O}))|g(A\circ B)=g(A)\circ g(B)\},
\end{equation}
or equivalently,
\begin{equation}\notag
F_{4}\equiv \{g\in GL(H(3,\mathbb{O}))|g(A^2)=(g(A))^2\},
\end{equation}
where $A^2\equiv A\circ A$.

\begin{lemma}\cite{harvey1990spinors}
If $g\in F_{4}$, then
\begin{equation}\notag
tr \ g(A)=tr \ A,
\end{equation}
for all $A\in H(3,\mathbb{O})$.
\end{lemma}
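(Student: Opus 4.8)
The plan is to recover the linear trace on $H(3,\mathbb{O})$ from the Jordan product alone, so that invariance under $F_{4}$ becomes automatic. For $A\in H(3,\mathbb{O})$ write $L_{A}$ for the real-linear endomorphism $B\mapsto A\circ B$ of the $27$-dimensional space $H(3,\mathbb{O})$, and let $\mathrm{Tr}$ denote the ordinary operator trace. I will show that $\mathrm{Tr}(L_{A})=9\,\mathrm{tr}\,A$ for every $A$, and that $\mathrm{Tr}(L_{g(A)})=\mathrm{Tr}(L_{A})$ for every $g\in F_{4}$; the lemma follows at once, since then $\mathrm{tr}\,g(A)=\tfrac{1}{9}\mathrm{Tr}(L_{g(A)})=\tfrac{1}{9}\mathrm{Tr}(L_{A})=\mathrm{tr}\,A$.

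For the second, easy, assertion: from $g(I)\circ g(A)=g(I\circ A)=g(A)$ for all $A$ together with surjectivity of $g$, the element $g(I)$ is a unit for $\circ$, so $g(I)=I$; and for any $A$ the computation $L_{g(A)}(B)=g(A)\circ B=g\bigl(A\circ g^{-1}(B)\bigr)=(g\circ L_{A}\circ g^{-1})(B)$ gives $L_{g(A)}=g\,L_{A}\,g^{-1}$, whence $\mathrm{Tr}(L_{g(A)})=\mathrm{Tr}(L_{A})$. For the identification of $\mathrm{Tr}(L_{A})$, both sides of $\mathrm{Tr}(L_{A})=9\,\mathrm{tr}\,A$ are real-linear in $A$, so it suffices to test on a spanning set of $H(3,\mathbb{O})$, namely the diagonal idempotents $E_{11},E_{22},E_{33}$ and the hermitian matrices supported in a single off-diagonal octonion block. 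For a real diagonal sign matrix $S=\mathrm{diag}(\varepsilon_{1},\varepsilon_{2},\varepsilon_{3})$ with $\varepsilon_{i}=\pm1$, the map $B\mapsto SBS$ preserves $H(3,\mathbb{O})$ and — because $S$ has real entries, so conjugation by it never forces us to multiply three octonions and hence associates without ambiguity with octonion matrix multiplication — satisfies $(SBS)\circ(SCS)=S(B\circ C)S$; thus it lies in $F_{4}$. Choosing $S$ to negate exactly one off-diagonal block while fixing the rest makes $L_{A}$ conjugate to $-L_{A}$ for such an $A$, forcing $\mathrm{Tr}(L_{A})=0$. Hence $\mathrm{Tr}(L_{A})$ depends only on the diagonal of $A$; conjugating by permutation matrices (also in $F_{4}$, for the same reason) yields $\mathrm{Tr}(L_{E_{11}})=\mathrm{Tr}(L_{E_{22}})=\mathrm{Tr}(L_{E_{33}})$, and since $L_{E_{11}}+L_{E_{22}}+L_{E_{33}}=L_{I}=\mathrm{id}_{H(3,\mathbb{O})}$, each of these equals $\tfrac{1}{3}\dim H(3,\mathbb{O})=9$. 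Therefore $\mathrm{Tr}(L_{A})=9(r_{1}+r_{2}+r_{3})=9\,\mathrm{tr}\,A$, and combining with the $F_{4}$-invariance of $A\mapsto\mathrm{Tr}(L_{A})$ completes the proof.

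The only point needing genuine care is the non-associativity of octonion matrix multiplication: one must verify that conjugation by the real matrices $S$ and by permutation matrices really is a Jordan automorphism, i.e. that $(SBS)(SCS)$ unambiguously equals $S(BC)S$ — which it does precisely because $S$ is real and central, and this is exactly why there is no naive ``octonion-unitary'' conjugation action and why $F_{4}$ had to be introduced abstractly. Everything else is routine linear algebra, using only the definition of $F_{4}$, the Jordan product, and $\dim H(3,\mathbb{O})=27$ recalled above. Should one prefer to bypass the operator-trace computation, an alternative is to invoke the generic cubic identity $A^{\circ 3}-(\mathrm{tr}\,A)\,A^{\circ 2}+\sigma(A)\,A-(\det A)\,I=0$ in $H(3,\mathbb{O})$: applying $g$, using $g(I)=I$, and comparing with the analogous identity for $g(A)$ forces $\mathrm{tr}\,g(A)=\mathrm{tr}\,A$ whenever $I,g(A),g(A)^{\circ 2}$ are linearly independent, with the remaining $A$ handled by density — but this route requires having that cubic identity at hand.
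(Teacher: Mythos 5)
Your proof is correct, and it is worth noting that the paper itself offers no argument here: the lemma is simply quoted from Harvey's \emph{Spinors and calibrations}, so yours is the only proof on the table. The route you take — recovering the linear trace from the Jordan product via $\mathrm{Tr}(L_{A})=9\,\mathrm{tr}\,A$ for the multiplication operator $L_{A}(B)=A\circ B$, and then using $L_{g(A)}=g\,L_{A}\,g^{-1}$ — is the standard one and all the steps check out: conjugation by real diagonal sign matrices and by permutation matrices is a genuine automorphism of $M(3,\mathbb{O})$ because real scalars are central in $\mathbb{O}$ and no triple product of octonions ever needs to be reassociated, and the Peirce decomposition confirms $\mathrm{Tr}(L_{E_{11}})=1+8\cdot\tfrac12+8\cdot\tfrac12=9$. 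One small inaccuracy: a diagonal sign matrix $S=\mathrm{diag}(\varepsilon_{1},\varepsilon_{2},\varepsilon_{3})$ multiplies the $(i,j)$ off-diagonal block by $\varepsilon_{i}\varepsilon_{j}$, and the product of the three signs $\varepsilon_{1}\varepsilon_{2},\varepsilon_{1}\varepsilon_{3},\varepsilon_{2}\varepsilon_{3}$ is always $+1$, so no such $S$ negates \emph{exactly one} off-diagonal block while fixing the other two — it negates an even number of them. This does not damage the argument, since for $A$ supported in a single off-diagonal block you only need $SAS=-A$ on that block (e.g.\ $S=\mathrm{diag}(1,-1,1)$ for the $(1,2)$ block), and what $S$ does to the blocks where $A$ vanishes is irrelevant; but the sentence as written describes a map that does not exist and should be rephrased. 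The concluding alternative via the cubic identity is fine as a remark but, as you say, imports machinery the direct operator-trace computation does not need.
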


Since $|A|^2=\frac{tr(A^{2})}{2}$,then the action of $F_{4}$ on $H(3,\mathbb{O})$ is an isometric action, $F_{4}$ is an subgroup of $O(27)$, moreover this action is transitive.

\begin{prop}\cite{harvey1990spinors}
$F_{4}$ acts transitively on the Cayley plane $\mathbb{O}P^{2}$ with isotropy subgroup equals to (an isomorphic copy of) $Spin(9)$ at the point
\begin{equation}\notag
E_{1}\equiv
\begin{pmatrix}
1 &0 &0 \\
0 &0 &0 \\
0 &0 &0
\end{pmatrix},
\end{equation}
i.e.,
\begin{equation}\notag
\mathbb{O}P^2\cong F_{4}/Spin(9).
\end{equation}
\end{prop}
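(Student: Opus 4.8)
The plan is to establish the three ingredients separately: that $F_{4}$ acts on $\mathbb{O}P^{2}$, that the isotropy group at $E_{1}$ is an isomorphic copy of $\mathrm{Spin}(9)$, and that the orbit of $E_{1}$ exhausts $\mathbb{O}P^{2}$; the homogeneous-space identification then follows formally. Invariance is immediate: for $g\in F_{4}$ one has $g(A^{2})=(g(A))^{2}$, so idempotents are carried to idempotents, and by the preceding Lemma $\mathrm{tr}\,g(A)=\mathrm{tr}\,A$, hence $g$ maps $\{A\in H(3,\mathbb{O}):A^{2}=A,\ \mathrm{tr}_{\mathbb{O}}A=1\}$ into itself. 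Since $|A|^{2}=\frac{1}{2}\mathrm{tr}(A^{2})$, one also sees $F_{4}\subset O(27)$, so $F_{4}$ is compact and every orbit is a compact embedded submanifold of $\mathbb{O}P^{2}$.

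Next I would compute $H:=\{g\in F_{4}:g(E_{1})=E_{1}\}$ by means of the Peirce decomposition of the idempotent $E_{1}$. Since $g$ is an algebra automorphism fixing $E_{1}$, it preserves the eigenspace splitting $H(3,\mathbb{O})=\mathbb{R}E_{1}\oplus V_{1/2}\oplus V_{0}$ of the operator $A\mapsto E_{1}\circ A$, where $V_{1/2}$ (the first-row and first-column octonionic off-diagonal entries) is $\cong\mathbb{O}^{2}\cong\mathbb{R}^{16}$ and coincides with $T_{E_{1}}\mathbb{O}P^{2}$ exactly as in the tangent-space computation of Section 2, while $V_{0}\cong H(2,\mathbb{O})\cong\mathbb{R}^{10}$ is the lower-right block. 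Restriction gives a faithful orthogonal representation $H\hookrightarrow O(V_{1/2})\times O(V_{0})$, and the automorphism identity $g(X\circ Y)=g(X)\circ g(Y)$ for $X,Y\in V_{1/2}$ forces the action on $V_{0}$ and on the $\mathbb{R}E_{1}$-component to be determined by the action on $V_{1/2}$ through the bilinear pairing $V_{1/2}\times V_{1/2}\to V_{0}\oplus\mathbb{R}E_{1}$. Identifying $\mathbb{O}^{2}$ with the real spin module $\Delta_{9}$ of $\mathrm{Cl}(\mathbb{R}^{9})$, this pairing is the $\mathrm{Spin}(9)$-equivariant Clifford multiplication $\Delta_{9}\otimes\Delta_{9}\to\mathbb{R}^{9}\subset V_{0}$ together with the inner product into $\mathbb{R}E_{1}$; one then reads off that $H$ is precisely the image of $\mathrm{Spin}(9)$ acting by the spin representation on $V_{1/2}$ and by the vector representation on the traceless part of $V_{0}$, so that $\dim H=\dim\mathrm{Spin}(9)=36$.

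For transitivity, since $\dim F_{4}=\dim\mathrm{Der}(H(3,\mathbb{O}))=52$, the orbit $F_{4}\cdot E_{1}\cong F_{4}/H$ is a compact, hence closed, embedded submanifold of $\mathbb{O}P^{2}$ of dimension $52-36=16=\dim\mathbb{O}P^{2}$; being an embedded submanifold of full dimension it is also open, and $\mathbb{O}P^{2}$ is connected (it is the continuous image of the connected set $\{a\in\mathbb{O}^{3}:|a|=1,\ [a_{1},a_{2},a_{3}]=0\}$ under $a\mapsto a\bar{a}^{t}$, by the preceding Proposition). Therefore $F_{4}\cdot E_{1}=\mathbb{O}P^{2}$, and $\mathbb{O}P^{2}\cong F_{4}/\mathrm{Spin}(9)$. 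Alternatively one can prove transitivity directly by exhibiting explicit transvections in $F_{4}$ that carry $E_{1}$ to the other diagonal idempotents and composing them with the $\mathrm{Spin}(8)$-rotations inside $H$, but the topological argument is shorter.

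The main obstacle is the stabilizer computation: pinning $H$ down as exactly $\mathrm{Spin}(9)$, and not merely as some $36$-dimensional subgroup of $O(16)\times O(10)$, rests on recognizing the octonionic pairing $\mathbb{O}^{2}\times\mathbb{O}^{2}\to H(2,\mathbb{O})$ as the Clifford multiplication for $\mathrm{Cl}(\mathbb{R}^{9})$, i.e.\ on the isomorphism $\mathbb{O}^{2}\cong\Delta_{9}$ of $\mathrm{Spin}(9)$-modules. This is precisely the spinorial input supplied by \cite{harvey1990spinors}, and it is where the bulk of the work lies.
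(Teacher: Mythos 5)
The paper does not actually prove this proposition: it is stated as a direct citation of \cite{harvey1990spinors}, so there is no in-paper argument to compare against. Your reconstruction is the standard proof from that reference, and its architecture is sound: invariance of $\{A:A^{2}=A,\ \mathrm{tr}\,A=1\}$ under $F_{4}$ follows from $g(A^{2})=(g(A))^{2}$ and the trace lemma; the Peirce decomposition $H(3,\mathbb{O})=\mathbb{R}E_{1}\oplus V_{1/2}\oplus V_{0}$ is preserved by the stabilizer, and since $V_{1/2}\circ V_{1/2}$ spans $\mathbb{R}E_{1}\oplus V_{0}$ the action on the latter is indeed determined by the action on $V_{1/2}\cong\mathbb{O}^{2}$; and the open-closed argument with $\dim F_{4}-\dim\mathrm{Spin}(9)=52-36=16$ correctly yields transitivity once the stabilizer and the connectedness of $\mathbb{O}P^{2}$ are in hand. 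The one step you assert rather than establish is the decisive one, as you yourself note: that the subgroup of $O(V_{1/2})\times O(V_{0})$ compatible with the pairing is \emph{exactly} $\mathrm{Spin}(9)$ (both inclusions require work --- that every spin transformation extends to a Jordan automorphism fixing $E_{1}$, and that nothing larger, e.g.\ a pin-type extension, survives the compatibility with the full multiplication including the $\mathbb{R}E_{1}$- and trace-components). That identification, via $\mathbb{O}^{2}\cong\Delta_{9}$ and Clifford multiplication, is precisely the content of the cited chapter of \cite{harvey1990spinors}, so your proposal is best read as a correct outline that correctly locates where the substance lies rather than a self-contained proof.
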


We choose a curve $\alpha (t)\in \mathbb{O}P^2$, $\alpha (0)=P, \alpha^{'}(0)=X\in T_{P}\mathbb{O}P^2 \subset H(3,\mathbb{O})$. Then derivative $\alpha (t)\alpha(t)=\alpha (t)$, we obtain $PX+XP=X$(or $P\circ X+X\circ P=X$). Now Assume $P=g(E_{1})$, denote $X=g(X_{0})$, where $g\in F_{4}$, $X_{0}\in T_{E_{1}}\mathbb{O}P^2$, then $g(X_{0})=g(X_{0})\circ P+P\circ g(X_{0})=g(X_{0}\circ E_{1}+E_{1}\circ X_{0})$, tells us $X_{0}=X_{0}\circ E_{1}+E_{1}\circ X_{0}$, by a similar dimension talk like before, we see $T_{P}\mathbb{O}P^2=\{X\in H(3,\mathbb{O})|XP+PX=X\}$, $T_{E_{1}}\mathbb{O}P^2=\{X\in H(3,\mathbb{O})|XE_{1}+E_{1}X=X\}$.

Since $\mathbb{O}P^{2}$ is the orbit through $E_{1}$ under the isometric action of $F_{4}$, the second fundamental forms are all the same at each point, so in the following, we can only consider these at the origin $E_{1}$.

Follow the discussion above, the tangent space of $\mathbb{O}P^{2}$ at $E_{1}$ is:
\begin{equation}\notag
T_{E_{1}}\mathbb{O}P^{2}=\Biggl\{
\begin{pmatrix}
0 &u &v \\
\bar{u} &0 &0\\
\bar{v} &0 &0
\end{pmatrix},
u,v\in \mathbb{O}
\Biggr\}.
\end{equation}

The normal space at $E_{1}$ is:
\begin{equation}\notag
N_{E_{1}}\mathbb{O}P^{2}=\Biggl\{
\begin{pmatrix}
r_{1} &0 &0 \\
0 &r_{2} &z\\
0 &\bar{z} &r_{3}
\end{pmatrix},
r_{1},r_{2},r_{3}\in \mathbb{R},z\in \mathbb{O}
\Biggr\}.
\end{equation}

Note that the exceptional Jordan algebra $H(3,\mathbb{O})$ is not associative, so it seems that there doesn't exists an similar Gauss formula like other Veronese embeddings of  $\mathbb{F}P^2$($\mathbb{F}=\mathbb{R},\mathbb{C},\mathbb{H}$), since we need an well-defined expression $P(t)Y(t)P(t)$ in the proof in proposition \ref{Normal forms}.

Though, as one of the symmetric space of rank $1$, $\mathbb{O}P^2$ stills behave like $\mathbb{F}P^2(\mathbb{F}=\mathbb{R},\mathbb{C},\mathbb{H})$ in some ways. Kunio Sakamoto\cite{sakamoto1977planar} proved that the Veronese embedding of $\mathbb{O}P^2$ is also an planar geodesic map just like the cases $\mathbb{F}P^2$, hence be an isotropy embedding and the second fundamental form is parallel. In the next, we compute the second fundamental forms of $\mathbb{O}P^{2}$ and the normal radius directly.

$\mathbb{O}P^2$ is covered by three charts(\cite{harvey1990spinors}),
\begin{equation}\notag
\begin{aligned}
U_{1}&\equiv \left \{\frac{a\bar{a}^{t}}{|a|^2}|a^t=(1,x,y)\in \mathbb{O}^3\right \}\cong \mathbb{O}^2,\\ \notag
U_{2}&\equiv \left \{\frac{a\bar{a}^{t}}{|a|^2}|a^t=(x,1,y)\in \mathbb{O}^3\right \}\cong \mathbb{O}^2,\\ \notag
U_{3}&\equiv \left \{\frac{a\bar{a}^{t}}{|a|^2}|a^t=(x,y,1)\in \mathbb{O}^3\right \}\cong \mathbb{O}^2,
\end{aligned}
\end{equation}
we will do the calculation on $U_{1}\cong \mathbb{O}^2$.

Set $x=\sum_{i=1}^{8}x_{i}u_{i}$ and $y=\sum_{j=1}^{8}y_{j}u_{j}$, where
\begin{equation}\notag
(u_{1},u_{2},u_{3},u_{4},u_{5},u_{6},u_{7},u_{8})=(1,i,j,k,e,ie,je,ke)
\end{equation}
are the standard basis of $\mathbb{O}$ when identified with the Euclidean space $\mathbb{R}^{8}$.

Restricted on $U_{1}$, the embedding of $\mathbb{O}P^2$ in $H(3,\mathbb{O})$ can be given by:
\begin{equation}\notag
\begin{aligned}
\varphi: \mathbb{O}P^2\supset U_{1} &\rightarrow H(3,\mathbb{O})\\
(x,y) &\mapsto \frac{a\bar{a}^{t}}{|a|^2}-\frac{I}{3},
\end{aligned}
\end{equation}
where $a^{t}=(1,x,y)$, and $E_{1}-\frac{I}{3}$ is corresponding to the coordinate origin $(0,0)$, the image lies in $S^{25}(\frac{1}{\sqrt{3}})$ linearly full(\cite{sakamoto1977planar}).

Set $\bar{i}=i+8$, denote $\varphi_{i}=\frac{\partial \varphi}{\partial x_{i}},\varphi_{\bar{j}}=\frac{\partial \varphi}{\partial y_{j}}$, where $1\leq i,j\leq 8$.

Then $e_{i}:=\varphi_{i}(0,0)=u_{i}E_{21}+\bar{u_{i}}E_{12}$, $e_{\bar{j}}:=\varphi_{\bar{j}}(0,0)=u_{j}E_{31}+\bar{u_{j}}E_{13}(1\leq i,j\leq 8)$  form the orthonormal basis of $T_{E_{1}}\mathbb{O}P^2$.

We continue to compute:
\begin{equation}\notag
\begin{aligned}
\nabla_{e_{i}}e_{j}&=\varphi_{ij}(0,0)=2\delta_{ij}(-E_{11}+E_{22}), \\
\nabla_{e_{\bar{i}}}e_{\bar{j}}&=\varphi_{\bar{i}\bar{j}}(0,0)=2\delta_{ij}(-E_{11}+E_{33}), \\
\nabla_{e_{i}}e_{\bar {j}}&=\varphi_{i \bar {j}}(0,0)=\delta_{ij}(E_{23}+E_{32}),
\end{aligned}
\end{equation}
where $\nabla$ is the Euclidean connection of $H(3,\mathbb{O})$, then $\sum_{i=1}^{8}\nabla_{e_{i}}e_{i}+\sum_{j=1}^{8}\nabla_{e_{\bar{j}}}e_{\bar{j}}$ is parallel to the position vector $E_{1}-\frac{I}{3}$, so its component in the tangent space of sphere is zero, moreover, since $\mathbb{O}P^2$ is an orbit of an isometry group action,  we conclude

\begin{prop}
The embedding $\varphi$ is minimal.
\end{prop}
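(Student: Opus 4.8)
The plan is to read minimality off the explicit first and second derivatives of $\varphi$ already computed on the chart $U_1\cong\mathbb{O}^2$, together with the transitivity of the isometric $F_4$-action. First I would verify that the vectors $e_i=\varphi_i(0,0)=u_iE_{21}+\bar u_iE_{12}$ and $e_{\bar j}=\varphi_{\bar j}(0,0)=u_jE_{31}+\bar u_jE_{13}$ ($1\leq i,j\leq 8$) form an orthonormal basis of $T_{E_1}\mathbb{O}P^2$ for the inner product $\langle A,B\rangle=\tfrac{1}{2}\,\mathrm{tr}(A\circ B)$: a short computation with octonion products gives $e_i\circ e_j=\delta_{ij}(E_{11}+E_{22})$, $e_{\bar i}\circ e_{\bar j}=\delta_{ij}(E_{11}+E_{33})$, while $e_i\circ e_{\bar j}$ has vanishing diagonal, so the sixteen vectors are unit, mutually orthogonal, and span $T_{E_1}\mathbb{O}P^2$ by the tangent space description obtained above. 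Because $\varphi$ moves at constant speed, the trace of the second fundamental form of $\varphi$ in the ambient Euclidean space $H(3,\mathbb{O})$ at $E_1$ equals $\sum_{i=1}^{8}\nabla_{e_i}e_i+\sum_{j=1}^{8}\nabla_{e_{\bar j}}e_{\bar j}$, where $\nabla$ is the flat connection of $H(3,\mathbb{O})$ and the terms are the second partials $\varphi_{ij}(0,0)=2\delta_{ij}(-E_{11}+E_{22})$ and $\varphi_{\bar i\bar j}(0,0)=2\delta_{ij}(-E_{11}+E_{33})$; note these already lie in $N_{E_1}\mathbb{O}P^2$, so no normal projection intervenes.

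Carrying out the summation,
\begin{align*}
\sum_{i=1}^{8}\nabla_{e_i}e_i+\sum_{j=1}^{8}\nabla_{e_{\bar j}}e_{\bar j}
&=16(-E_{11}+E_{22})+16(-E_{11}+E_{33})\\
&=-32E_{11}+16(E_{22}+E_{33})=-48\Bigl(E_1-\frac{I}{3}\Bigr),
\end{align*}
which is a nonzero multiple of the position vector $E_1-\tfrac{I}{3}$ of the corresponding point on the sphere $S^{25}(1/\sqrt{3})$, centered at $\tfrac{I}{3}$, in which $\varphi(\mathbb{O}P^2)$ is linearly full. Since a submanifold of a round sphere is minimal in that sphere precisely when the trace of its second fundamental form in the surrounding Euclidean space points in the radial direction, this already shows $\varphi$ is minimal at $E_1$; and because $F_4\subset O(27)$ acts transitively on $\mathbb{O}P^2$ by isometries fixing the center $\tfrac{I}{3}$ and hence the sphere, minimality at $E_1$ propagates to every point. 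Together with the proposition of Subsection~2.3 relating a minimal submanifold of the unit sphere to its cone, this also yields that the cone over $\mathbb{O}P^2$ is minimal.

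The one genuinely delicate input is the preliminary computation of the second partials $\varphi_{ij}$, $\varphi_{\bar i\bar j}$, $\varphi_{i\bar j}$ of $a\bar a^{t}/|a|^{2}$: since $H(3,\mathbb{O})$ is non-associative, the Gauss formula of Proposition~\ref{Normal forms} used for $\mathbb{F}P^2$ ($\mathbb{F}=\mathbb{R},\mathbb{C},\mathbb{H}$) is unavailable, and one must differentiate the parametrization directly while keeping track of the order of octonion multiplications; once those values are recorded (as above), the rest is the elementary summation and the homogeneity remark. As a sanity check, $\varphi_{i\bar j}(0,0)=\delta_{ij}(E_{23}+E_{32})\in N_{E_1}\mathbb{O}P^2$, consistent with $\varphi$ being, as for the Veronese embeddings studied by Sakamoto, an isotropic embedding with parallel second fundamental form.
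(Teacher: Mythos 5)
Your argument is correct and is essentially the paper's own proof: compute the second partial derivatives of the chart parametrization at the origin, observe that $\sum_i\varphi_{ii}(0,0)+\sum_j\varphi_{\bar j\bar j}(0,0)=-48\bigl(E_1-\tfrac{I}{3}\bigr)$ is parallel to the position vector so the mean curvature in the sphere vanishes at $E_1$, and propagate this to all points by the transitive isometric $F_4$-action. The extra checks you include (orthonormality of the $e_i,e_{\bar j}$ and the explicit radial coefficient) only make the same argument more complete.
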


The normal space $N_{E_{1}}C$ of the cone $C$ over $\mathbb{O}P^2$ at $E_{1}$(or minus the center) has an orthonormal basis given by:
\begin{equation}\notag
L=\sqrt{\frac{2}{3}}I, M=E_{22}-E_{33}, N_{k}=u_{k}E_{23}+\bar{u_{k}}E_{32}(1\leq k\leq 8).
\end{equation}

Choose an unit normal vector $\xi\in N_{E_{1}}C$, $\xi=aL+bM+\sum_{k}c_{k}N_{k}$, i.e.,
\begin{equation}\notag
a^2+b^2+\sum_{k=1}^{8}c_{k}^2=1.
\end{equation}

Set $H_{AB}^{\xi}=\langle H(e_{A},e_{B}),\xi \rangle$, where $H(e_{A},e_{B})$ is the second fundamental form of the embedding $\varphi$ into sphere, $1\leq A,B \leq 16$,
then \begin{equation}\notag
H_{AB}^{\xi}=\langle \varphi_{AB},\xi \rangle=a\langle \varphi_{AB},L \rangle+b\langle \varphi_{AB},M \rangle+\sum_{k}c_{k}\langle \varphi_{AB},N_{k} \rangle,
\end{equation}
since $\xi$ is perpendicular to the position vector(or refer to \cite{lawson1970complete}).

The values of $H_{AB}^{\xi}$ is given by:
\begin{equation}\notag
H_{ij}^{\xi}=b\delta_{ij}, H_{i\bar {j}}^{\xi}=c_{1}\delta_{ij}, H_{\bar{i}j}^{\xi}=c_{1}\delta_{ij}, H_{\bar{i}\bar{j}}^{\xi}=-b\delta_{ij}.
\end{equation}

Hence, $||H^{\xi}||^2:=\sum_{AB}(H_{AB}^{\xi})^2=16(b^2+c_{1}^2)\leq 16$, the equal sign is hold if and only if $a=0$ and $c_{\alpha}=0$ for every $\alpha \in \{2,\ldots,8\}$, i.e.,  $||H^{\xi}||^2$ attains its maximum at the normal direction
\begin{equation}\notag
\begin{pmatrix}
0 &0 &0\\
0 &b &c\\
0 &c &-b
\end{pmatrix},
\end{equation}
where $b,c\in \mathbb{R}$, and $b^2+c^2=1$.

\begin{prop}
The upper bound of second fundamental form of cone over $\mathbb{O}P^2$ at the points belong to unit sphere contained in $H(3,\mathbb{O})$ is given by: $sup_{\xi}||H^{\xi}||^2=\frac{16}{3}$.
\end{prop}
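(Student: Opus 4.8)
The plan is to transfer the in-sphere bound $\|H^{\xi}\|^{2}\le 16$ just obtained for the Veronese minimal embedding $\varphi$ of $\mathbb{O}P^{2}$ to the cone $C$, exactly as was done for $G(n,m;\mathbb{F})$ and for $\mathbb{F}P^{m-1}$: one multiplies by the square of the radius of the ambient sphere. Recall from the computation above that for a unit normal $\xi=aL+bM+\sum_{k}c_{k}N_{k}\in N_{E_{1}}C$ one has $H_{ij}^{\xi}=b\delta_{ij}$, $H_{i\bar j}^{\xi}=H_{\bar i j}^{\xi}=c_{1}\delta_{ij}$, $H_{\bar i\bar j}^{\xi}=-b\delta_{ij}$, whence $\|H^{\xi}\|^{2}=16(b^{2}+c_{1}^{2})\le 16$, with equality precisely on the block-diagonal normal direction $\begin{pmatrix}0&0&0\\0&b&c\\0&c&-b\end{pmatrix}$, $b^{2}+c^{2}=1$. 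Here $\varphi$ has image in $S^{25}(\tfrac{1}{\sqrt3})\subset H(3,\mathbb{O})$, so the relevant radius squared is $(\tfrac{1}{\sqrt3})^{2}=\tfrac13$.

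Concretely I would apply the homothety $X\mapsto\sqrt3\,X$ of $H(3,\mathbb{O})$, which carries $S^{25}(\tfrac{1}{\sqrt3})$ onto the unit sphere $S^{25}(1)$ and multiplies every principal curvature of a submanifold inside the sphere by $\tfrac{1}{\sqrt3}$, leaving the (unit) normal data unchanged; hence the rescaled $\mathbb{O}P^{2}$ sits minimally in $S^{25}(1)$ with $\|h^{\xi}\|^{2}\le\tfrac13\cdot 16=\tfrac{16}{3}$. Then by the cone/sphere relation \eqref{mim submfd and its cone} the second fundamental form of the cone $C$ at a point of the unit sphere (the parameter value $t=1$) coincides along every cone-normal $\xi$ with that of the rescaled $\mathbb{O}P^{2}$ — the radial direction contributing only the vanishing entries $\tilde h_{i0}^{\xi}=\tilde h_{00}^{\xi}=0$ — so that $\sup_{\xi}\|h^{\xi}\|^{2}=\tfrac{16}{3}$, attained on the same distinguished direction as above.

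There is essentially no obstacle here: the substantive work — determining $T_{E_{1}}\mathbb{O}P^{2}$, $N_{E_{1}}C$, the derivatives $\varphi_{AB}(0,0)$ and the optimization of $\|H^{\xi}\|^{2}$ over unit normals — is already carried out, so the proof is a short bookkeeping of the homothety factor $\tfrac13$ together with the observation that passing from the minimal submanifold to its cone merely appends a null direction. (The genuine difficulty lies downstream, in feeding this value $\tfrac{16}{3}$ and the normal radius of the cone into Lawlor's vanishing-angle estimates to conclude area-minimization of the cone over $\mathbb{O}P^{2}$.)
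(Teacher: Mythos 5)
Your proposal is correct and follows essentially the same route as the paper: the in-sphere bound $\|H^{\xi}\|^{2}\le 16$ for the embedding into $S^{25}(\tfrac{1}{\sqrt{3}})$ is converted to the stated bound $\tfrac{16}{3}$ for the cone at unit-sphere points by multiplying by the square of the radius, exactly as the paper does for $G(n,m;\mathbb{F})$ via \eqref{mim submfd and its cone}. Your explicit justification of the scaling through the homothety $X\mapsto\sqrt{3}\,X$ and the appended null radial direction is a correct spelling-out of the step the paper leaves implicit.
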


Following \cite{lawlor1991sufficient}, now $dim\ C=17$, $\alpha^2=\frac{16}{3}$, denote the estimated vanishing angle by $\theta_{2}$, then we have
\begin{equation}\notag
\mathrm{tan}\left(\theta_{2}\left(17,\sqrt{\frac{16}{3}}\right)\right)<\frac{12}{17}\mathrm{tan}\left(
\theta_{2}\left(12,\frac{12}{17}\sqrt{\frac{16}{3}}\right)\right).
\end{equation}

Since $\left(\frac{12}{17}\sqrt{\frac{16}{3}}\right)^2<3$, we have $\theta_{2}\left(17,\sqrt{\frac{16}{3}}\right)<\theta_{2}(12,\sqrt{3})=8.15^{\circ}$ by Gary R.Lawlor's table.

Now, we consider the normal radius,
\begin{prop}
The normal radius of the cone over $\mathbb{O}P^2$ is $\frac{2\pi}{3}$.
\end{prop}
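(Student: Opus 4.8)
The plan is to repeat, almost verbatim, the computation of the normal radius for the general Grassmannians in Proposition~\ref{radius}. Since $\mathbb{O}P^{2}$ is an orbit of the transitive isometric $F_{4}$-action, it suffices to find the first return of a normal geodesic to $\mathbb{O}P^{2}$ at the origin $E_{1}$, equivalently at the cone point $E_{1}-\frac{I}{3}$, which has length $\frac{1}{\sqrt{3}}$. I would take a normal $\xi\in N_{E_{1}}C$ of the cone of the same length $\frac{1}{\sqrt{3}}$, so by the description of $N_{E_{1}}C$ above $\xi=bM+\sum_{k=1}^{8}c_{k}N_{k}$ with $b^{2}+\sum_{k}c_{k}^{2}=\frac{1}{3}$, and study the normal geodesic $\gamma(\theta)=\cos\theta\,(E_{1}-\frac{I}{3})+\sin\theta\,\xi$ on $S^{25}(\frac{1}{\sqrt{3}})$. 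The goal is to show that the least $\theta>0$ for which $\gamma(\theta)\in\mathbb{O}P^{2}-\frac{I}{3}$ equals $\frac{2\pi}{3}$, and in fact does so for \emph{every} direction $\xi$.

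The first step is to observe that $E_{1}-\frac{I}{3}$, $I$, $M$ and every $N_{k}$ have vanishing $(1,2),(1,3),(2,1),(3,1)$ entries, so $Q(\theta):=\gamma(\theta)+\frac{I}{3}$ always has the block shape $Q=\mathrm{diag}(a_{11})\oplus\begin{pmatrix}a_{22}&w\\ \bar w&a_{33}\end{pmatrix}$, where $a_{11}=\frac{1}{3}+\frac{2}{3}\cos\theta$ and $a_{22},a_{33}$ are explicit real functions of $\theta$, and $w=\sin\theta\sum_{k}c_{k}u_{k}$ is a single octonion. Because only this one octonion direction occurs, squaring $Q$ meets no nonassociativity: one gets $Q^{2}=\mathrm{diag}(a_{11}^{2})\oplus\begin{pmatrix}a_{22}^{2}+|w|^{2}&(a_{22}+a_{33})w\\ (a_{22}+a_{33})\bar w&a_{33}^{2}+|w|^{2}\end{pmatrix}$, exactly as in the complex or quaternionic case. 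Since $\mathbb{O}P^{2}=\{A^{2}=A,\ \mathrm{tr}_{\mathbb{O}}A=1\}$ and the trace of $Q$ is automatically $1$, the condition $Q\in\mathbb{O}P^{2}$ becomes $a_{11}\in\{0,1\}$ together with $a_{22}^{2}+|w|^{2}=a_{22}$, $a_{33}^{2}+|w|^{2}=a_{33}$ and $(a_{22}+a_{33})w=w$.

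The computation then finishes by a case split on $a_{11}$. If $a_{11}=1$ then $\cos\theta=1$, impossible for $\theta\in(0,2\pi)$. If $a_{11}=0$ then $\cos\theta=-\frac{1}{2}$, whose smallest positive solution is $\theta=\frac{2\pi}{3}$; and for $\theta\in(0,\frac{2\pi}{3})$ one has $\cos\theta\in(-\frac{1}{2},1)$, hence $a_{11}\in(0,1)$ and $Q^{2}\neq Q$, so no return is possible before $\frac{2\pi}{3}$. Finally, at $\theta=\frac{2\pi}{3}$ the surviving equations $a_{22}+a_{33}=1$ and $|w|^{2}=a_{22}a_{33}$ reduce, after a short computation, precisely to the normalization $b^{2}+\sum_{k}c_{k}^{2}=\frac{1}{3}$ of $\xi$; thus every normal geodesic \emph{does} return to $\mathbb{O}P^{2}-\frac{I}{3}$ at $\theta=\frac{2\pi}{3}$ (for instance $\xi=M$ sends it to $E_{2}-\frac{I}{3}$). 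Hence the first-return angle is $\frac{2\pi}{3}$ in every normal direction, so the normal radius equals $\frac{2\pi}{3}$. As a sanity check, $\frac{2\pi}{3}=\arccos(-\frac{1}{2})$ is exactly the angular distance from $E_{1}-\frac{I}{3}$ to $E_{2}-\frac{I}{3}$ on $S^{25}(\frac{1}{\sqrt{3}})$, matching the ``nearest opposite idempotent'' picture used in Proposition~\ref{radius}.

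The only point calling for care is the legitimacy of the idempotency computation inside the nonassociative Jordan algebra $H(3,\mathbb{O})$; this is handled precisely by the remark that a single octonion $w$ appears, so all products lie in an associative subalgebra and the bottom $2\times2$ block behaves formally like a quaternionic Hermitian matrix. An alternative route would be to use the isotropy group $Spin(9)$ to first bring $\xi$ to a real diagonal matrix while fixing $E_{1}$, and then read off the first return from the real diagonal picture; the direct computation sketched above has the advantage of not requiring transitivity of the $Spin(9)$-action on unit normals.
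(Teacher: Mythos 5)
Your overall strategy is the same as the paper's: parametrize the normal great circles at the origin $E_{1}-\frac{I}{3}$, impose the idempotency and trace conditions defining $\mathbb{O}P^{2}$, and read off the first-return angle $\arccos(-\tfrac12)=\tfrac{2\pi}{3}$. Your block computation of $Q^{2}$, the observation that only one octonion entry occurs so that nonassociativity never enters, and the verification that the surviving equations at $\theta=\tfrac{2\pi}{3}$ reduce exactly to the normalization of $\xi$ are all correct and agree with the paper's conclusion (including the $S^{8}$ of nearest points).

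There is, however, one concrete gap: your ansatz $\xi=bM+\sum_{k}c_{k}N_{k}$ does not exhaust the normal space of the cone. By the paper's own description, $N_{E_{1}}C$ also contains $L=\sqrt{2/3}\,I$, which is orthogonal to both $T_{E_{1}}\mathbb{O}P^{2}$ and to the position vector $E_{1}-\frac{I}{3}$, hence is a legitimate initial direction for a normal great circle; a general normal is $\xi=aL+bM+\sum_{k}c_{k}N_{k}$. With $a\neq 0$ the trace of $Q(\theta)$ is $1+\sqrt{6}\,a\sin\theta$, not $1$, so your assertion that the trace is ``automatically $1$'' --- and your conclusion that \emph{every} normal geodesic returns at $\tfrac{2\pi}{3}$ --- hold only on the subfamily you wrote down. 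The paper closes this by taking a general $\xi=\mathrm{diag}(r,s,t)$ plus an off-diagonal octonion, with $2r=s+t$ from normality to the cone, and deriving $r=0$ from the requirement $\mathrm{tr}(P)=1$. You need either this trace argument (which shows the omitted geodesics never meet $\mathbb{O}P^{2}$ while $\sin\theta\neq 0$, so they do not lower the normal radius), or a prior reduction to the trace-zero hyperplane $\mathbb{R}^{26}$ in which the cone actually lies, where your nine-dimensional ansatz \emph{is} the full normal space. Either fix is one line, but as written the case $a\neq 0$ is unaddressed. (A minor slip: with your length normalization the direction reaching $E_{2}-\frac{I}{3}$ is $\xi=\frac{1}{\sqrt{3}}M$, not $M$.)
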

\begin{proof}
We can only do the computation at the origin $E_{1}-\frac{I}{3}$, choose an normal vector of the cone $C$ of length $\frac{1}{\sqrt{3}}$: $\xi=\begin{pmatrix}r &0 &0\\0 &s &z \\0 &\bar{z} &t\end{pmatrix}$, where $r,s,t\in \mathbb{R}, z\in \mathbb{O}$, i.e.,
\begin{equation}\notag
r^2+s^2+t^2+2|z|^2=\frac{2}{3},
\end{equation}

and \begin{equation}\notag
2r=s+t.
\end{equation}

Set \begin{equation}\label{tower4}
\mathrm{cos}(\theta)(E_{1}-\frac{I}{3})+\mathrm{sin}(\theta)\xi=P-\frac{I}{3},
\end{equation}
where $\theta\in (0,\pi], P\in \mathbb{O}P^2$.

Since trace of the right hand of \eqref{tower4} is zero, then
\begin{equation}\notag
r+s+t=0.
\end{equation}

Hence
\begin{equation}\notag
r=0,t=-s,s^2+|z|^2=\frac{1}{3}.
\end{equation}

Now, \begin{equation}\notag
P=\begin{pmatrix}
\frac{2}{3}\mathrm{cos}(\theta)+\frac{1}{3} &0 &0\\0 &-\frac{1}{3}\mathrm{cos}(\theta)+\mathrm{sin}(\theta)s+\frac{1}{3} &\mathrm{sin}(\theta)z \\0 &\mathrm{sin}(\theta)\bar{z} &-\frac{1}{3}\mathrm{cos}(\theta)-\mathrm{sin}(\theta)s+\frac{1}{3}
\end{pmatrix}
\end{equation}

From the condition $P^2=P$, we get the solutions of \eqref{tower4}:

(1)~$\theta=\frac{2\pi}{3}$, the set of nearest points to $E_{1}-\frac{I}{3}$ is isomorphic to an immersed $S^{8}$ in $\mathbb{O}P^2$,
\begin{equation}\notag
P=\begin{pmatrix}
0 &0 &0\\
0 &\frac{1+\sqrt{3}s}{2} &\frac{\sqrt{3}z}{2}\\
0 &\frac{\sqrt{3}\bar{z}}{2} &\frac{1-\sqrt{3}s}{2}
\end{pmatrix},
\end{equation}
where $s^2+|z|^2=\frac{1}{3}$.

(2)~$\theta=\pi$.

So, the normal radius is $\frac{2\pi}{3}$.

\end{proof}

Finally,
\begin{theorem}
The cone over $\mathbb{O}P^2$ under embedding $\varphi$ is area-minimizing.
\end{theorem}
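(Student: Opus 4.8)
The plan is to invoke Gary R. Lawlor's Curvature Criterion (\cite{lawlor1991sufficient}, the simplified form Theorem~1.3.5) directly, feeding it the three ingredients assembled above. By the minimality of $\varphi$, the image $\varphi(\mathbb{O}P^2)=\{P-\tfrac{I}{3}\mid P\in\mathbb{O}P^2\}$ is a minimal submanifold of the sphere $S^{25}(\tfrac1{\sqrt3})$ lying in the hyperplane $\{X\in H(3,\mathbb{O})\mid \operatorname{tr}X=0\}\cong\mathbb{R}^{26}$, so the cone $C$ is a minimal cone of dimension $k=\dim\mathbb{O}P^2+1=17$ in $\mathbb{R}^{26}$. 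Since $F_4$ acts transitively and isometrically on $\mathbb{O}P^2$, the second fundamental form is the same at every point, so I would build Lawlor's ODE \eqref{VN} only at the base point $E_1-\tfrac{I}{3}$.

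At that point I would plug in the curvature data proved above, namely $\sup_\xi\|H^\xi\|^2=\tfrac{16}{3}$, taking $\alpha=\sqrt{16/3}$ as the constant that bounds $\inf_v\det\!\big(I-\tan(\theta)h^v_{ij}\big)$ from below. Because $k=17>12$, I would apply Lawlor's scaling estimate $\tan\!\big(\theta_2(k,\alpha)\big)<\tfrac{12}{k}\tan\!\big(\theta_2(12,\tfrac{12}{k}\alpha)\big)$; since $\big(\tfrac{12}{17}\sqrt{16/3}\big)^2<3$ and $\theta_2(12,\cdot)$ is decreasing in its second argument, this yields $\theta_2\big(17,\sqrt{16/3}\big)<\theta_2(12,\sqrt3)=8.15^\circ$, hence the genuine vanishing angle satisfies $\theta_0\le\theta_2<8.15^\circ$.

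It then remains only to compare $2\theta_0$ with the normal radius of $C$, computed above to be $\tfrac{2\pi}{3}=120^\circ$. Since $2\theta_0<16.3^\circ\ll120^\circ$, the hypothesis of the Curvature Criterion holds with an enormous margin — in sharp contrast with the borderline Veronese cone over $\mathbb{R}P^2$ — and therefore $C$ is area-minimizing.

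\noindent\emph{Where the work really sits.} The theorem itself reduces to this short bookkeeping once the three propositions (minimality of $\varphi$, the bound $\sup_\xi\|H^\xi\|^2=\tfrac{16}{3}$, and the normal radius $\tfrac{2\pi}{3}$) are in hand, so the main obstacle has already been overcome before the statement: one cannot use the Gauss-type formula of Proposition~\ref{Normal forms}, since $H(3,\mathbb{O})$ is non-associative and the expression $P(t)Y(t)P(t)$ is not well defined. This is why everything is computed in the affine chart $U_1\cong\mathbb{O}^2$ by differentiating $\varphi$ explicitly, and why Sakamoto's theorem that $\varphi$ is a planar geodesic immersion (so the second fundamental form is parallel and isotropic) is needed to legitimize evaluating at the single point $E_1$.
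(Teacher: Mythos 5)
Your proposal is correct and follows essentially the same route as the paper: both assemble the minimality of $\varphi$, the curvature bound $\sup_\xi\|H^\xi\|^2=\tfrac{16}{3}$, and the normal radius $\tfrac{2\pi}{3}$, then apply Lawlor's scaling estimate with $k=17$ to get $\theta_2<8.15^{\circ}$ and conclude since $2\theta_0<16.3^{\circ}<120^{\circ}$. Your closing remark about where the real work lies (the explicit chart computation replacing the unavailable Gauss formula in the non-associative $H(3,\mathbb{O})$) accurately reflects the paper's own structure.
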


\begin{remark}
The four cones over Veronese embeddings of $\mathbb{F}P^2(\mathbb{F}=\mathbb{R},\mathbb{C},\mathbb{H},\mathbb{O})$ own the same normal radius $\frac{2\pi}{3}$, and proportional length of square of second fundamental forms.
\end{remark}
\section{Cones over oriented real Grassmannians}
\subsection{Pl\"{u}cker  embedding of oriented real Grassmannian}
For searching cones over oriented real Grassmannian $\widetilde{G}(n,m;\mathbb{R})$, we should embed them in an suitable ambient Euclidean space.

In this section, we consider the Pl\"{u}cker  embedding  of all oriented real Grassmannian $\widetilde{G}(n,m;\mathbb{R})$ into unit spheres of exterior vector spaces as minimal submanifolds(\cite{harvey1982calibrated},\cite{morgan1985exterior}), it includes the standard embedding of  complex hyperquadric $Q_{l}(\mathbb{C})\cong \widetilde{G}(2,l+2;\mathbb{R})$ into Euclidean space $\wedge^{2}\mathbb{R}^{l+2}$, the calculations of second fundamental forms are based on \cite{chen1988the} which use the method of moving frame, some good references on moving frame are \cite{Chern1983},\cite{griffiths1974cartan}.

Let $V$ be an $m$-dimensional real inner product space(often refer to $\mathbb{R}^{m}$), we can define inner product spaces $\wedge^{n}V$. If $e_{1},\ldots,e_{n}$ are linearly independent in $V$, then the product
\begin{equation}\notag
e_{\lambda}=e_{\lambda_{1}}\wedge\cdots \wedge e_{\lambda_{n}}
\end{equation}
corresponding to all $\lambda \in \mathcal{\wedge}(n,m)$ are linearly independent in $\wedge^{n}V$, where $\mathcal{\wedge}(n,m)$ denote the set of all increasing maps of $\{1,\ldots,n\}$ into $\{1,\ldots,m\}$.

With respect to the induced inner product:
\begin{equation}\notag
\langle e_{\lambda},e_{\mu}\rangle=det(\langle e_{\lambda_{i}},e_{\mu_{j}}\rangle),
\end{equation}
if $\{e_{1},\ldots,e_{m}\}$ are orthonormal basis of $V$, then
$\wedge^{n}V$ have orthonormal basis: $\{e_{\lambda}\}$($\lambda\in \mathcal{\wedge}(n,m)$), and $dim  \wedge^{n}V =C^n_{m}$, where $C_{m}^{n}$ is the combination number.

Let $\wedge^{n}\mathbb{R}^{m}$ be the vector space of all $n$-vectors of $\mathbb{R}^{m}$, for an oriented $n$-plane $L\in\widetilde{G}(n,m;\mathbb{R})$, Let $\{u_{1},\ldots, u_{n}\}$ be an oriented orthonormal basis of $L$, the Pl\"{u}cker  embedding  is given by:
\begin{equation}\notag
\begin{aligned}
i: \widetilde{G}(n,m;\mathbb{R}) &\rightarrow \wedge^{n}\mathbb{R}^{m} \\
L &\mapsto u_{1}\wedge \cdots \wedge u_{n},
\end{aligned}
\end{equation}
the image can be seen as an orbit of the exterior power of standard representation of $SO(m)$ on $\mathbb{R}^{m}$, so it is equivariant.

Choose an oriented orthonormal basis of $\mathbb{R}^{m}$: $E=(e_{1},\ldots,e_{m})$, such that $E_{0}=e_{1}\wedge\cdots \wedge e_{n}$ is the origin of $\widetilde{G}(n,m;\mathbb{R})$, we give an oriented orthonormal basis for $\wedge^{n}\mathbb{R}^{m}$ as follows: set $E_{i_{1}\ldots i_{q} \alpha_{1}\ldots \alpha_{q}}=e_{1}\wedge \cdots \wedge e_{\alpha_{1}}\wedge \cdots \wedge e_{\alpha_{q}}\wedge \cdots \wedge e_{n}$, where $1\leq q \leq min(n,m-n)$, $1\leq i_{1}<\cdots <i_{q} \leq n$, $n+1\leq \alpha_{1}<\cdots <\alpha_{q}\leq m$, and $e_{\alpha_{1}}$ is in the $i_{1}$-position, \ldots, $e_{\alpha_{q}}$ is in the $i_{q}$-position. Then, all of the $E_{0},E_{i_{1}\ldots i_{q} \alpha_{1}\ldots \alpha_{q}}$ gives an oriented orthonormal basis of $\wedge^{n}\mathbb{R}^{m}$ under the lexicographic arrangement.

Denote the Maure-Cartan forms of $SO(m)$ by $\omega$, then $\omega=E^{-1}dE$, where $E\in SO(m)$, i.e. $dE_{A}=E_{B}\omega_{A}^{B}$, it satisfy the Maurer-Cartan equation: $d\omega=-\omega\wedge \omega$.

Now
\begin{equation}\notag
de_{i}=e_{j}\omega_{i}^{j}+e_{\alpha}\omega_{i}^{\alpha},
\end{equation}
where $1\leq i,j\leq n, n+1\leq \alpha \leq m$.

then
\begin{equation}\notag
\begin{aligned}
d(e_{1}\wedge\cdots \wedge e_{n})&=\sum_{i=1}^{n}e_{1}\wedge\cdots \wedge de_{i}\wedge \cdots \wedge e_{n}\\&=\sum_{i,\alpha}e_{1}\wedge\cdots \wedge e_{i-1}\wedge e_{\alpha}\wedge e_{i+1}\wedge  \cdots \wedge e_{n}\omega_{i}^{\alpha}\\
&=E_{i\alpha}\omega_{i}^{\alpha},
\end{aligned}
\end{equation}

$\widetilde{G}(n,m;\mathbb{R})$ is equipped with the induced metric: $ds^{2}=\sum_{i,\alpha}(\omega_{i}^{\alpha})^{2}$, and $\{E_{i\alpha}\}$ is the orthonormal tangent frame.

The orthonormal normal frame of $i(\widetilde{G}(n,m;\mathbb{R}))\hookrightarrow S^{C_{m}^{n}-1}(1)$ is

$\{E_{i_{1}\ldots i_{q} \alpha_{1}\ldots \alpha_{q}}(q\geq 2)\}$, $E_{0}$ is the position vector.

\subsection{Second fundamental forms of $\widetilde{G}(n,m;\mathbb{R})$}
We arrange the following indices: $1\leq i_{1}<\cdots<i_{q}\leq n,\ n+1\leq \alpha_{1}<\cdots<\alpha_{q}\leq m$ where $1\leq q \leq min(n,m-n)$.

Now, choose an element $\sigma\in S_{q}$, where $S_{q}$ is the permutation group of order $q$, let $\tau=\sigma^{-1}$, then easy to see
\begin{equation}\notag
E_{i_{\sigma(1)}\ldots i_{\sigma(q)} \alpha_{1}\ldots \alpha_{q}}=E_{i_{1}\ldots i_{q} \alpha_{\tau(1)}\ldots \alpha_{\tau(q)}}
\end{equation}
where $\alpha_{\tau(1)}$ is in the $i_{1}$-position,$\ldots$,$\alpha_{\tau(q)}$ is in the $i_{q}$-position.

Hence \begin{equation}\label{summ}
E_{j_{1}\cdots j_{q}\beta_{1}\cdots \beta_{q}}=\delta_{j_{1}\cdots j_{q}}^{i_{1}\cdots i_{q}}\delta_{\beta_{1}\cdots \beta_{q}}^{\alpha_{1}\cdots \alpha_{q}}E_{i_{1}\ldots i_{q} \alpha_{1}\ldots \alpha_{q}},
\end{equation}
where $i_{1},\ldots,i_{q},\alpha_{1},\ldots,\alpha_{q}$ are not indices for summing, and \begin{equation}\notag
\langle E_{i_{1}\ldots i_{q} \alpha_{1}\ldots \alpha_{q}},E_{j_{1}\cdots j_{q}\beta_{1}\cdots \beta_{q}}\rangle=\delta_{j_{1}\cdots j_{q}}^{i_{1}\cdots i_{q}}\delta_{\beta_{1}\cdots \beta_{q}}^{\alpha_{1}\cdots \alpha_{q}}.
\end{equation}

Now,
\begin{equation}\notag
\begin{aligned}
dE_{i\alpha}&=\sum_{j\neq i}e_{1}\wedge \cdots \wedge de_{j} \wedge \cdots \wedge e_{\alpha} \wedge \cdots \wedge e_{n} + e_{1}\wedge \cdots \wedge de_{\alpha} \wedge \cdots \wedge e_{n} \\
&=-E_{0}\omega_{i}^{\alpha}+\sum_{j,\beta}E_{j\beta}(\omega_{i}^{j}\delta_{\alpha}^{\beta}
+\omega_{\alpha}^{\beta}\delta_{i}^{j})+\sum_{j\neq i,\beta \neq \alpha}E_{ji\beta \alpha}\omega_{j}^{\beta},
\end{aligned}
\end{equation}
where $e_{\alpha},de_{\alpha}$ are in the $i$-position.

In summary, the structure equations can be written as:
\begin{equation}\notag
\begin{cases}
dE_{0}=E_{i\alpha}\omega_{i}^{\alpha},\\
dE_{i\alpha}=-E_{0}\omega_{i}^{\alpha}+\sum_{j,\beta}E_{j\beta}(\omega_{i}^{j}\delta_{\alpha}^{\beta}
+\omega_{\alpha}^{\beta}\delta_{i}^{j})+\sum_{j\neq i,\beta\neq \alpha}E_{ji\beta \alpha}\omega_{j}^{\beta},\\
dE_{i_{1}\ldots i_{q} \alpha_{1}\ldots \alpha_{q}}=high\ order\ terms.
\end{cases}
\end{equation}

The dual $1$-forms are: $\theta^{(i\alpha)}=(E_{i\alpha})^{*}=\omega_{i}^{\alpha}$, and the connection $1$-forms are:
\begin{equation}\notag
\theta_{(i\alpha)}^{(j\beta)}=\langle dE_{i\alpha},E_{j\beta}\rangle=\omega_{i}^{j}\delta_{\alpha}^{\beta}
+\omega_{\alpha}^{\beta}\delta_{i}^{j}.
\end{equation}

Then $\theta_{(i\alpha)}^{0}=\langle dE_{i\alpha},E_{0}\rangle=-\omega_{i}^{\alpha}=-\theta^{(i\alpha)}$,

and follow \eqref{summ}, we have \begin{equation}\notag
\theta_{(i\alpha)}^{(jk\beta\gamma)}=\langle dE_{i\alpha},E_{jk\beta\gamma}\rangle=\sum_{l,\tau}\delta_{il}^{jk}\delta_{\alpha \tau}^{\beta \gamma}\theta^{(l\tau)}.
\end{equation}

When $q>2$, $\theta_{(i\alpha)}^{(i_{1}\ldots i_{q}\alpha_{1}\ldots \alpha_{q})}=0$.

Hence the coefficients of second fundamental forms are given by:
\begin{equation}\notag
\begin{cases}
h_{(i\alpha)(j\beta)}^{0}=-\delta_{j}^{i}\delta_{\beta}^{\alpha},\\
h_{(i\alpha)(l\tau)}^{(jk\beta\gamma)}=\delta_{il}^{jk}\delta_{\alpha\tau}^{\beta \gamma},\\
h_{(i\alpha)(l\tau)}^{(i_{1}\ldots i_{q}\alpha_{1}\ldots \alpha_{q})}=0(q>2).
\end{cases}
\end{equation}

So, the Pl\"{u}cker embedding is minimal, and the the normals to the cone $C$ of $i(\widetilde{G}(n,m;\mathbb{R}))$ in $\mathbb{R}^{C_{m}^{n}}$ at $E_{0}$ are: $E_{jk\beta\gamma}$ and $E_{i_{1}\ldots i_{q}\alpha_{1}\ldots \alpha_{q}}(q>2)$, so
\begin{equation}\notag
sup_{\xi}||h^{\xi}||^{2}=\sum_{i,\alpha,l,\tau}||h_{(i\alpha)(l \tau)}^{jk\beta\gamma}||^{2}
=\sum_{i,l,\alpha,\tau}(\delta_{il}^{jk})^{2}(\delta_{\alpha\tau}^{\beta\gamma})^{2}=4,
\end{equation}
where $\xi$ is a unit normal of $C$ at $E_{0}$.

Hence,
\begin{theorem}
The upper bound of second fundamental form of cones over $\widetilde{G}(n,m;\mathbb{R})$ contained in $\wedge^{n}\mathbb{R}^{m}$ is $sup_{\xi}||h^{\xi}||^{2}=4$.
\end{theorem}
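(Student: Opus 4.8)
The plan is to reduce the statement to the finite, purely combinatorial computation already set up at the base point $E_0$, using the equivariant second fundamental form written down just above. First, pass from the cone to its link: by the cone/link correspondence recorded in \eqref{mim submfd and its cone}, the second fundamental form of the cone $C$ at a point of the unit sphere $S^{C_m^n-1}(1)\subset\wedge^n\mathbb{R}^m$ agrees with that of the Pl\"ucker-embedded $\widetilde{G}(n,m;\mathbb{R})$ inside that sphere, and the normal space of $C$ along the link coincides with the normal space of the submanifold in the sphere; hence $sup_\xi||h^\xi||^2$ for the cone equals the corresponding supremum for $i(\widetilde{G}(n,m;\mathbb{R}))$. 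Since this embedding is an orbit of the $n$-th exterior power of the standard representation of $SO(m)$, the second fundamental form is equivariant, so it is enough to bound $||h^\xi||^2$ over unit normals $\xi$ at the origin $E_0$.

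Second, I would isolate the structural fact that forces the bound to be independent of $n$ and $m$: only the degree-two normals $E_{jk\beta\gamma}$ carry any shape operator. The structure equation for $dE_{i\alpha}$ involves only $E_0$, the tangent vectors $E_{j\beta}$, and the $E_{ji\beta\alpha}$, so the connection forms $\theta_{(i\alpha)}^{(i_1\ldots i_q\alpha_1\ldots\alpha_q)}$ vanish identically for $q>2$, giving $h_{(i\alpha)(l\tau)}^{(i_1\ldots i_q\alpha_1\ldots\alpha_q)}=0$ when $q>2$; the only nonzero coefficients are $h_{(i\alpha)(l\tau)}^{(jk\beta\gamma)}=\delta_{il}^{jk}\delta_{\alpha\tau}^{\beta\gamma}$. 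Writing a unit normal as $\xi=\sum_{j<k,\ \beta<\gamma}c_{jk\beta\gamma}E_{jk\beta\gamma}+(\text{higher-degree terms})$ with $\sum_{j<k,\ \beta<\gamma}c_{jk\beta\gamma}^2\le||\xi||^2=1$, the higher-degree part contributes nothing to $h^\xi$.

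Third comes the bookkeeping with the generalized Kronecker deltas. For a fixed degree-two normal $E_{jk\beta\gamma}$ with $j<k$, $\beta<\gamma$, the coefficient $\delta_{il}^{jk}\delta_{\alpha\tau}^{\beta\gamma}$ is nonzero precisely for the four ordered pairs with $\{i,l\}=\{j,k\}$ and $\{\alpha,\tau\}=\{\beta,\gamma\}$, each equal to $\pm1$, so $||h^{E_{jk\beta\gamma}}||^2=\big(\sum_{i,l}(\delta_{il}^{jk})^2\big)\big(\sum_{\alpha,\tau}(\delta_{\alpha\tau}^{\beta\gamma})^2\big)=2\cdot 2=4$. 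Crucially, distinct degree-two normals touch disjoint sets of tangent index pairs $((i\alpha),(l\tau))$, so $||h^\xi||^2$ has no cross-terms and becomes a clean Pythagorean sum,
$$||h^\xi||^2=\sum_{j<k,\ \beta<\gamma}c_{jk\beta\gamma}^2\,||h^{E_{jk\beta\gamma}}||^2=4\sum_{j<k,\ \beta<\gamma}c_{jk\beta\gamma}^2\le 4,$$
with equality whenever $\xi$ lies in the span of the degree-two normals, e.g. for any single $E_{jk\beta\gamma}$. This yields $sup_\xi||h^\xi||^2=4$.

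The computation is routine once the second fundamental form is in hand; the only point needing genuine care is the disjoint-support claim together with the count $\sum_{i,l}(\delta_{il}^{jk})^2=2$, which is exactly what turns $||h^\xi||^2$ into a sum of squares rather than an indefinite quadratic form in the $c_{jk\beta\gamma}$. There is no analytic obstacle here; this elementary bound is simply the input $\alpha^2=4$ that will later be matched against the normal radius $\frac{\pi}{2}$ through Lawlor's Curvature Criterion.
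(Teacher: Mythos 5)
Your proposal is correct and follows essentially the same route as the paper: compute the second fundamental form at the base point $E_{0}$ from the structure equations, observe that only the degree-two normals $E_{jk\beta\gamma}$ carry a nonzero shape operator, and evaluate $\sum_{i,l,\alpha,\tau}(\delta_{il}^{jk})^{2}(\delta_{\alpha\tau}^{\beta\gamma})^{2}=2\cdot 2=4$. Your disjoint-support argument handling arbitrary unit normals $\xi$ is in fact slightly more complete than the paper's write-up, which records the value $4$ only for a single axis normal and then asserts the supremum.
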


\subsection{Normal radius of the cones over $\widetilde{G}(n,m;\mathbb{R})$}
In this subsection, we will show that the normal radius of the cones over $\widetilde{G}(n,m;\mathbb{R})$ are all at least $\frac{\pi}{2}$, in fact, they are all equal $\frac{\pi}{2}$. We note here Takahiro Kanno have confirmed it for $\widetilde{G}(2,m;\mathbb{R})$ by computing with Weyl group of the symmetric pair $(SO(m)^2,SO(m))$(\cite{kanno2002area}).

\begin{theorem}
The normal radius of the cones over $i(\widetilde{G}(n,m;\mathbb{R}))$ are all at least $\frac{\pi}{2}$.
\end{theorem}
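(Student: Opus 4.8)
The plan is to exploit equivariance and then reduce the whole question to one elementary determinant fact. Since the Pl\"{u}cker embedding realizes $i(\widetilde{G}(n,m;\mathbb{R}))$ as an orbit of the orthogonal representation $\wedge^{n}$ of $SO(m)$ on $\wedge^{n}\mathbb{R}^{m}$, the normal radius may be estimated at the single base point $E_{0}=e_{1}\wedge\cdots\wedge e_{n}$. Mimicking the computations already done for $G(n,m;\mathbb{F})$, I would take a unit normal $\xi$ to the cone $C$ at $E_{0}$ — equivalently a unit vector with $\langle\xi,E_{0}\rangle=0$ and $\langle\xi,E_{i\alpha}\rangle=0$ for all tangent directions $E_{i\alpha}$ — and look for the least $\theta\in(0,\pi]$ for which $\zeta:=\cos\theta\,E_{0}+\sin\theta\,\xi$ is again a unit simple $n$-vector (i.e. lies in $i(\widetilde{G}(n,m;\mathbb{R}))$). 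Because $\langle\zeta,E_{i\alpha}\rangle=0$ holds automatically for such a $\zeta$, everything reduces to the claim: \emph{every unit simple $n$-vector $\zeta\ne E_{0}$ with $\langle\zeta,E_{i\alpha}\rangle=0$ for all $i\in\{1,\dots,n\}$ and $\alpha\in\{n+1,\dots,m\}$ satisfies $\langle\zeta,E_{0}\rangle\le 0$}, which forces $\theta=\arccos\langle\zeta,E_{0}\rangle\ge\frac{\pi}{2}$ and hence the normal radius $\ge\frac{\pi}{2}$.

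To set up the claim, write $\zeta=v_{1}\wedge\cdots\wedge v_{n}$ with $\{v_{a}\}$ orthonormal, and split the coordinates into the two blocks $M=(\langle v_{a},e_{b}\rangle)_{1\le a,b\le n}$ and $Q=(\langle v_{a},e_{\alpha}\rangle)_{1\le a\le n,\ n+1\le\alpha\le m}$. Using the inner product formula $\langle x_{1}\wedge\cdots\wedge x_{n},y_{1}\wedge\cdots\wedge y_{n}\rangle=\det(\langle x_{a},y_{b}\rangle)$ together with the explicit description of $E_{i\alpha}$ (the frame $e_{1},\dots,e_{n}$ with $e_{i}$ replaced by $e_{\alpha}$), one obtains $\langle\zeta,E_{0}\rangle=\det M$ and $\langle\zeta,E_{i\alpha}\rangle=\det\big(M\text{ with its $i$-th column replaced by the $\alpha$-column of }Q\big)$. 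Thus the hypothesis $\langle\zeta,E_{i\alpha}\rangle=0$ for all $i,\alpha$ says precisely that replacing \emph{any} single column of $M$ by \emph{any} column of $Q$ annihilates the determinant. Since only the vanishing of these inner products is used, the signs and the lexicographic ordering built into the basis $\{E_{i_{1}\cdots i_{q}\alpha_{1}\cdots\alpha_{q}}\}$ are irrelevant.

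The core step is then a one-line linear-algebra argument. If $\det M\ne 0$, the columns $M^{(1)},\dots,M^{(n)}$ form a basis of $\mathbb{R}^{n}$, and for a column $w$ of $Q$ the determinant with $w$ in position $i$ vanishes iff $w\in\operatorname{span}\{M^{(j)}:j\ne i\}$; requiring this for every $i$ gives $w\in\bigcap_{i}\operatorname{span}\{M^{(j)}:j\ne i\}=\{0\}$. Hence $Q=0$, every $v_{a}$ lies in $\operatorname{span}\{e_{1},\dots,e_{n}\}$, so $\zeta=(\det M)E_{0}$ with $\det M=\pm1$, and $\zeta\ne E_{0}$ forces $\langle\zeta,E_{0}\rangle=-1$. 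If instead $\det M=0$ then $\langle\zeta,E_{0}\rangle=0$ outright. In both cases $\langle\zeta,E_{0}\rangle\le 0$, which is the claim. (For the sharper equality $\frac{\pi}{2}$ announced in the text, one notes that the oriented $n$-plane obtained from $E_{0}$ by replacing $e_{1},e_{2}$ with $e_{n+1},e_{n+2}$ — a unit simple $n$-vector orthogonal to $E_{0}$ and to every $E_{i\alpha}$ — is reached by a normal geodesic exactly at $\theta=\frac{\pi}{2}$.)

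I do not anticipate a real obstacle: the equivariance reduction and the wedge inner product formulas are routine, and the heart of the proof is the elementary intersection-of-hyperplanes fact above. The only point that needs genuine care is the translation step — correctly identifying $\langle\zeta,E_{i\alpha}\rangle$ with the ``column-replaced'' determinant and confirming that the orientation conventions of the basis of $\wedge^{n}\mathbb{R}^{m}$ play no role because we only exploit that certain inner products vanish.
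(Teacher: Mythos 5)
Your proposal is correct, and it takes a genuinely different route from the paper. The paper argues by contradiction: assuming the normal radius is less than $\frac{\pi}{2}$, it writes $P_{0}+T=\lambda P$ with $T$ normal and $\lambda>1$, invokes the criterion that $w=P_{0}+T$ is decomposable iff $\dim\ker(v\mapsto v\wedge w)=n$, shows by inspecting the type-$(n,1)$ terms that this kernel must be $\operatorname{span}\{e_{1},\dots,e_{n}\}$, and then deduces from $e_{i}\wedge T=0$ for all $i$ that every coefficient of $T$ vanishes (each basis normal $E_{i_{1}\cdots i_{q}\alpha_{1}\cdots\alpha_{q}}$ with $q\geq 2$ survives wedging with some $e_{i}$, $i\in\{i_{1},\dots,i_{q}\}$). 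You instead parametrize the competing simple $n$-vector by an orthonormal frame, block-decompose its coordinate matrix into $M$ and $Q$, identify $\langle\zeta,E_{0}\rangle$ and $\langle\zeta,E_{i\alpha}\rangle$ as $\det M$ and its column-replaced variants, and finish with the elementary fact that $\bigcap_{i}\operatorname{span}\{M^{(j)}:j\neq i\}=\{0\}$ when $\det M\neq 0$. Your identification of $\langle\zeta,E_{i\alpha}\rangle$ with the column-replaced determinant is exactly right, and the case split $\det M\neq 0$ versus $\det M=0$ is airtight. What your route buys is the sharper dichotomy $\langle\zeta,E_{0}\rangle\in\{0,-1\}$ for $\zeta\neq E_{0}$, which gives the exact value $\frac{\pi}{2}$ and a description of the nearest points in one stroke (the paper defers the exactness and the cut locus to a separate remark); what the paper's kernel argument buys is a cleaner identification of precisely which normal directions $T$ make $P_{0}+T$ decomposable, which feeds its later remark on the structure of the set of nearest points.
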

\begin{proof}
Let $P_{0}=e_{1}\wedge \cdots \wedge e_{n}$ be the origin, $T$ be a normal of $C$ at $P_{0}$, since $i$ is homogeneous, we can do the computation at the origin $P_{0}$.

First, we assume the normal radius is less than $\frac{\pi}{2}$, then there exists a normal $T$, such that
\begin{equation}\notag
P_{0}+T=\lambda P,
\end{equation}
where $\lambda>1$ is a positive real number, $P$ is another point in $i(\widetilde{G}(n,m;\mathbb{R}))$, i.e. $P$ is a unit simple $n$-vector.

If $n=2$, the proof is clear than the general cases. Now $P_{0}=e_{1}\wedge e_{2}$, the tangent vectors are: $\{e_{1}\wedge e_{\alpha},e_{2}\wedge e_{\alpha} \}(3\leq\alpha\leq m)$, the normal vectors are: $\{e_{\alpha}\wedge e_{\beta} \}(3\leq\alpha<\beta\leq m)$.

We set $T=\sum_{\alpha<\beta}a_{\alpha \beta}e_{\alpha}\wedge e_{\beta}$, then $P$ does be exist if and only if $P_{0}+T$
is decomposable, i.e. it is a simple $n$-vector. By the condition about decomposable of $2$-vectors(\cite{morgan1985exterior},\cite{pavan2017exterior}), it is equivalent to:
\begin{equation}\notag
\begin{aligned}
0&=\Big(e_{1}\wedge e_{2}+\sum_{\alpha<\beta}a_{\alpha \beta}e_{\alpha}\wedge e_{\beta}\Big)\wedge\Big(e_{1}\wedge e_{2}+\sum_{\alpha<\beta}a_{\alpha \beta}e_{\alpha}\wedge e_{\beta}\Big)\\
&=2\sum_{\alpha<\beta}e_{1}\wedge e_{2}a_{\alpha \beta}e_{\alpha}\wedge e_{\beta}+\sum_{\alpha<\beta,\gamma<\tau}a_{\alpha \beta}a_{\gamma \tau}e_{\alpha}\wedge e_{\beta}\wedge e_{\gamma}\wedge e_{\tau}
\end{aligned}
\end{equation}
then all $a_{\alpha\beta}$ should be zero, this is a contraction.

For general Grassmanians, we set $T=\sum_{q,I,\alpha}a_{i_{1}\cdots i_{q}\alpha_{1}\cdots \alpha_{q}}E_{i_{1}\cdots i_{q}\alpha_{1}\cdots \alpha_{q}}$, where the indices $I=\{(i_{1},\ldots,i_{q})|1\leq i_{1}<\cdots<i_{q}\leq n\}$, $\alpha=\{(\alpha_{1},\ldots,\alpha_{q})|n+1\leq \alpha_{1}<\cdots<\alpha_{q}\leq m \}$ and
$2\leq q \leq min(n,m-n)$. Denote $P_{0}=e_{1}\wedge\cdots\wedge e_{n}$, set $w=P_{0}+T$, we define a linear operator $T_{w}:\mathbb{R}^{m}\rightarrow \wedge^{n+1}\mathbb{R}^{m}$ by $T_{w}(v)=v\wedge w$, $U:=ker \ T_{w}$. Then $w$ is decomposable if and only if $dim \ U=n$(\cite{federer1969geometric}).

We choose a nonzero vector $v\in U, v=\sum_{A=1}^{m}x_{A}e_{A}$, then
\begin{equation}\notag
\sum_{A=1}^{m}x_{A}e_{A}\wedge\Big(e_{1}\wedge\cdots \wedge e_{n}+\sum_{q,I,\alpha}a_{i_{1}\cdots i_{q}\alpha_{1}\cdots \alpha_{q}}E_{i_{1}\cdots i_{q}\alpha_{1}\cdots \alpha_{q}}\Big)=0.
\end{equation}

We call the axis $(n+1)$-vectors $e_{s_{1}\cdots s_{p}r_{1}\cdots r_{q}}:=e_{s_{1}}\wedge \cdots \wedge e_{s_{p}}\wedge e_{r_{1}}\wedge \cdots \wedge e_{r_{q}}$ is of type $(p,q)$ if $s_{1},\ldots,s_{p}\in \{1,\ldots,n\}$ and $r_{1},\ldots,r_{p}\in \{n+1,\ldots,m\}$, then the terms of type $(n,1)$ are: $\sum_{\alpha}(-1)^{n}x_{\alpha}e_{1}\wedge\cdots \wedge e_{n}\wedge e_{\alpha}$. So, $x_{\alpha}\equiv 0$ for all $\alpha=n+1,\ldots,m$.

Then $dim\ U=n$ is equivalent to $U=span_{\mathbb{R}}\{e_{1},\ldots,e_{n}\}$, i.e.
\begin{equation}\notag
e_{i}\wedge T=0,
\end{equation}
for all $i\in \{1,\ldots,n\}$.

Set $I=\{i_{1},\ldots,i_{q}\}$, $e_{i}\wedge T=0$ is equivalent to that the coefficients $a_{i_{1}\cdots i_{q}\alpha_{1}\cdots \alpha_{q}}$ are all zeros for those indices $i\in I$. But the value of $i,i_{1},\ldots,i_{q}$ could be anyone of $1,\ldots,n$, so all the coefficients $a_{i_{1}\cdots i_{q}\alpha_{1}\cdots \alpha_{q}}$ are zeros, i.e. $T\equiv 0$, this is a contradiction.
\end{proof}

\begin{remark}
 Assume the unit speed normal geodesic $\mathrm{cos}(\theta)P_{0}+\mathrm{sin}(\theta)\frac{T}{|T|}$ interest the image of $i(\widetilde{G}(n,m;\mathbb{R}))$ at another point $P$ where $\theta \in (0,\pi]$, if $\theta \neq \frac{\pi}{2},\pi$, then easy to see the above discussion also derive the same contradiction, so the normal radius is just $\frac{\pi}{2}$, the nearest points are those axis normal vectors and non-axis normal vectors $T$ where $T$ is decomposable too, in special, if $2n\leq m$, it contains the grassmannian of oriented $n$-plane in normal space $\mathbb{R}^{m-n}$.
\end{remark}

By checking the estimated vanishing angle(we omit details here, it is similar to $G(n,m;\mathbb{R})$), we conclude that:
\begin{theorem}
Except $\widetilde{G}(2,4;\mathbb{R})$, all the cones over Pl\"{u}cker  embedding  of $\widetilde{G}(n,m;\mathbb{R})$ are area-minimizing.
\end{theorem}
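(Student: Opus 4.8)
The plan is to apply Lawlor's Curvature Criterion \cite{lawlor1991sufficient} in exactly the way Section 3 handles $G(n,m;\mathbb{R})$, now fed by the two quantities already computed in this section: the Pl\"ucker embedding $i(\widetilde{G}(n,m;\mathbb{R}))$ is minimal in $S^{C_{m}^{n}-1}(1)$ with $\sup_{\xi}\|h^{\xi}\|^{2}=4$, so in Lawlor's notation $\alpha=2$ \emph{independently of} $n$ and $m$; and the normal radius of every cone $C$ over $i(\widetilde{G}(n,m;\mathbb{R}))$ equals $\tfrac{\pi}{2}$. Writing $k=\dim\widetilde{G}(n,m;\mathbb{R})+1=n(m-n)+1$ for the dimension of the cone, the criterion applies once $2\theta_{0}(k,\alpha)<\tfrac{\pi}{2}$, where $\theta_{0}$ is the real vanishing angle; since $\theta_{0}\le\theta_{1}(k,\alpha)\le\theta_{2}(k,\alpha)$ it is enough to bound Lawlor's estimated angles and check $2\theta_{1}(k,2)<\tfrac{\pi}{2}$ (resp.\ $2\theta_{2}(k,2)<\tfrac{\pi}{2}$).

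Using $\widetilde{G}(n,m;\mathbb{R})\cong\widetilde{G}(m-n,m;\mathbb{R})$ and discarding $\widetilde{G}(1,m;\mathbb{R})\cong S^{m-1}$ (totally geodesic, flat cone), the cones of dimension $k\le 12$ are those over $\widetilde{G}(2,5;\mathbb{R})$ $(k=7)$, $\widetilde{G}(2,6;\mathbb{R})$ $(k=9)$, $\widetilde{G}(3,6;\mathbb{R})$ $(k=10)$, $\widetilde{G}(2,7;\mathbb{R})$ $(k=11)$, together with the borderline $\widetilde{G}(2,4;\mathbb{R})$ $(k=5)$. For the first four I would read $\theta_{1}(k,2)$ off Lawlor's table; these come out well below $\tfrac{\pi}{4}$ (for instance $\widetilde{G}(2,6;\mathbb{R})$ reproduces the entry $\theta_{1}=11.57^{\circ}$ already recorded for $G(2,4;\mathbb{C})$ in Section 3, while $\widetilde{G}(3,6;\mathbb{R})$ and $\widetilde{G}(2,7;\mathbb{R})$ carry a smaller $\alpha$ than the real Grassmannians of the same cone dimension treated there), so the Curvature Criterion applies.

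For the remaining infinite family $k=n(m-n)+1>12$ I would argue uniformly with Lawlor's asymptotic estimate
\begin{equation}\notag
\tan\!\big(\theta_{2}(k,\alpha)\big)<\frac{12}{k}\tan\!\left(\theta_{2}\!\left(12,\tfrac{12}{k}\alpha\right)\right).
\end{equation}
With $\alpha=2$ we have $\tfrac{12}{k}\alpha=\tfrac{24}{k}\le\tfrac{24}{13}<3$, hence $\tan(\theta_{2}(12,\tfrac{24}{k}))\le\tan(\theta_{2}(12,3))\approx0.152$ from the table value already used in Section 3, and therefore $\tan(\theta_{2}(k,2))<\tfrac{12}{13}\cdot0.152<\tfrac{1}{6}$, i.e.\ $\theta_{2}(k,2)<10^{\circ}$. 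Combined with the normal radius $\tfrac{\pi}{2}$ this verifies the Curvature Criterion for every $\widetilde{G}(n,m;\mathbb{R})$ with $n(m-n)\ge12$, which together with the previous paragraph gives the asserted area-minimization for all cases except possibly $\widetilde{G}(2,4;\mathbb{R})$.

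Finally, for $\widetilde{G}(2,4;\mathbb{R})$ I would show the cone is genuinely not area-minimizing by identifying the Pl\"ucker image. Under the Hodge splitting $\wedge^{2}\mathbb{R}^{4}=\wedge^{+}\oplus\wedge^{-}$ (each summand $\cong\mathbb{R}^{3}$) a unit $2$-vector $\omega=\omega_{+}+\omega_{-}$ is decomposable precisely when $|\omega_{+}|=|\omega_{-}|$, hence when $|\omega_{+}|=|\omega_{-}|=\tfrac{1}{\sqrt{2}}$; thus $i(\widetilde{G}(2,4;\mathbb{R}))=S^{2}(\tfrac{1}{\sqrt{2}})\times S^{2}(\tfrac{1}{\sqrt{2}})\subset S^{5}(1)$, and the cone in question is the minimal cone over $S^{2}(\tfrac{1}{\sqrt{2}})\times S^{2}(\tfrac{1}{\sqrt{2}})$ in $\mathbb{R}^{6}$, which is \emph{unstable} (its cone dimension $5$ lies below the stability threshold for cones over products of spheres, cf.\ \cite{lawlor1991sufficient}), so it fails to be area-minimizing. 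I expect this last identification to be the only step requiring a separate idea; the rest is a bookkeeping comparison of the two explicit numbers $\theta_{\mathrm{vanish}}$ and $\tfrac{\pi}{2}$ exactly as for $G(n,m;\mathbb{R})$, the only mild care needed being to confirm that the four low-dimensional table entries clear the margin $2\theta_{1}<\tfrac{\pi}{2}$, which they do comfortably.
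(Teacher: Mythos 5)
Your proposal follows the paper's approach exactly: the paper computes $\sup_{\xi}\|h^{\xi}\|^{2}=4$ and normal radius $\tfrac{\pi}{2}$ and then simply states that checking the vanishing angles "is similar to $G(n,m;\mathbb{R})$," which is precisely the case-by-case bookkeeping ($k\le 12$ via the table, $k>12$ via the asymptotic estimate $\tan\theta_{2}(k,\alpha)<\tfrac{12}{k}\tan\theta_{2}(12,\tfrac{12}{k}\alpha)$) that you carry out, and your numbers check out. Your treatment of $\widetilde{G}(2,4;\mathbb{R})$ as the unstable cone over $S^{2}(\tfrac{1}{\sqrt{2}})\times S^{2}(\tfrac{1}{\sqrt{2}})$ likewise matches the paper's concluding remark.
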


\begin{remark}
$\widetilde{G}(2,4;\mathbb{R})$ is isometry isomorphic to $S^{2}(\frac{\sqrt{2}}{2})\times S^{2}(\frac{\sqrt{2}}{2})$
(\cite{morgan1985exterior}), the images under Pl\"{u}cker  embedding  is the Clifford minimal hypersurfaces in $S^{5}(1)$, its cone belong to the type of cones which the Curvature Criterion of Lawlor is necessary and sufficient, follow Corollary 4.4.6 in \cite{lawlor1991sufficient}, it is unstable. We note here an pictured description in \cite{morgan2016geometric} for area-minimizing surface bounded by the product of spheres collapsed onto the cone with the increasing dimension.
\end{remark}

\begin{remark}
We can also talk about the cones over Pl\"{u}cker  embedding of $G(n,m;\mathbb{C})$ and $G(n,m;\mathbb{H})$ in a similar way. Moreover, notice these submanifolds are written in $n$-vectors(respectively, $2n,4n$-vectors), consider their canonical forms (\cite{harvey1982calibrated},\cite{harvey1990spinors}), we have the following question: could we find some calibrations which calibrate these area-minimizing cones?
\end{remark}

\vspace{0.3cm}
\noindent\textbf{Acknowledgments}. This work is supported by NSFC No.11871450.

\end{document}